\newcommand{\RP}{{\mathbb{RP}}}
\newcommand{\R}{\mathbb R}
\newcommand{\Z}{\mathbb Z}
\newcommand{\Zc}{\mathcal Z}
\newcommand{\Fc}{\mathcal F}
\newcommand{\Pc}{\mathcal P}
\newcommand{\Fb}{\mathbf F}
\newcommand{\Mb}{\mathbf M}
\newcommand{\id}{\mathrm {id}}
\newcommand{\genus}{\mathrm {genus}}
\newcommand{\Hom}{\mathrm {Hom}}
\newcommand{\LSS}{\mathbf L_{\mathrm{SS}}}
\newcommand{\length}{\mathrm{length}}
\newcommand{\area}{\mathrm{area}}
\newcommand{\Hess}{\mathrm{Hess}}
\newcommand{\dist}{\mathrm{dist}}
\newcommand{\dmn}{\mathrm{dmn}}
\newcommand{\ind}{\mathrm{index}}
\renewcommand{\div}{\mathrm{div}}
\renewcommand{\tilde}{\widetilde}
\newcommand{\spt}{\mathrm{spt}}
\newcommand{\x}{\times}
\newtheorem{thm}{Theorem}[section]
\newtheorem{cor}[thm]{Corollary}
\newtheorem{prop}[thm]{Proposition}
\newtheorem{lem}[thm]{Lemma}
\theoremstyle{definition}
\newtheorem{defn}[thm]{Definition}
\newtheorem{rmk}[thm]{Remark}
\numberwithin{equation}{section}
\newtheoremstyle{TheoremNum}
{\topsep}{\topsep}{\itshape}{}{\bfseries}{.\,}{ }{\thmname{#1}\thmnote{ \bfseries #3}}
\theoremstyle{TheoremNum}
  \protected@write\@auxout{}{%
    \string\@restatetheorem{#1}{\detokenize\expandafter{\BODY}}%
  }%
\BODY\end{thm}%
\newcommand{\@restatetheorem}[2]{%
  \expandafter\gdef\csname restatethisthm@#1\endcsname{#2}%
}
\newcommand{\restatethm}[1]{%
  \begingroup
  \renewcommand{\thethm}{\ref{#1}}%
  \begin{thm}\csname restatethisthm@#1\endcsname\end{thm}%
  \endgroup
}
  \protected@write\@auxout{}{%
    \string\@restateproposition{#1}{\detokenize\expandafter{\BODY}}%
  }%
\BODY\end{prop}%
\newcommand{\@restateproposition}[2]{%
  \expandafter\gdef\csname restatethisprop@#1\endcsname{#2}%
}
\newcommand{\restateprop}[1]{%
  \begingroup
  \renewcommand{\thethm}{\ref{#1}}%
  \begin{prop}\csname restatethisprop@#1\endcsname\end{prop}%
  \endgroup
}
\title{A  Free boundary minimal  surface via a 6-sweepout}
\date{\today}
\author{Adrian Chun-Pong Chu}
\address{Department of Mathematics, University of Chicago, 5734 S. University Avenue, Chicago, IL 60637, USA}
\email{acpc@uchicago.edu}
\begin{document}

\maketitle
\begin{abstract} We  prove that the Almgren-Pitts 6-width of the unit 3-ball is less than $2\pi$. We  also prove that there exists a free boundary minimal surface in the unit 3-ball that has genus at most 1, index at most 5,  area less than $2\pi$, and is not the equatorial disk or the critical catenoid.
\end{abstract}

\section{Introduction}\label{intro}

Given a compact Riemannian 3-manifold $M$, one can  {\it relate the  topology of the space of all surfaces in $M$ to minimal surfaces in $M$ via Morse theory}. One may even obtain information about the genus, Morse index, and  area of the minimal surfaces. In this paper, we will illustrate this phenomenon by looking at surfaces with low genus and area in the  compact Euclidean unit 3-ball $\mathbb B^3$, via the Almgren-Pitts and the Simon-Smith min-max theory.

Let $\mathcal E$ denote the set of all  surfaces, possibly with boundary, in $\mathbb B^3$ that are  smooth and properly embedded {\it except possibly at  finitely many points} (see \S \ref{trivial} for details). The reason for allowing singularities is that we want to study the space of all surfaces, regardless of their genus or number of connected components, as a whole. In fact, let us define on $\mathcal E$ the following topology inspired by the Simon-Smith min-max theory: For each  finite set $P\subset\mathbb B^3$, we define on the subset 
\begin{equation}\label{singsub}
    \{S\in\mathcal E:S \backslash P\textrm{ is smooth and properly embedded}\}
\end{equation}
the topology induced by the graphical $C^\infty$-convergence within open sets  $U\subset\subset\mathbb B^3\backslash P$ (meaning  $\overline{U}\subset \mathbb B^3\backslash P$). Now, we collect all open sets in  (\ref{singsub}) for all possible $P$ to form a base, thereby defining a topology on $\mathcal E$. Note that under this topology, one has continuous paths in $\mathcal E$ of surfaces with different genus or number of connected components via neck-pinching. Then our first main result is the following.

Let   $\mathcal E_g\subset \mathcal E$ be the subset of  {\it smooth} surfaces   with genus $g$,   and $\mathcal E^{a}\subset \mathcal E$ the subset of  surfaces with area less than $a$ for $a\in(0,\infty]$. Note that $\mathcal E^\infty\ne\mathcal E$, since an element of $\mathcal E$ can have an infinite area,  concentrated near a singularity. And as in the Simon-Smith min-max theory,  the genus of a disconnected smooth surface is defined as the sum of the genus of each of its connected components.

\begin{thm}\label{type}
The first to the sixth cohomology groups  of $$\overline{\mathcal E_0\cup \mathcal E_1}\cap \mathcal E^{2\pi}$$ in $\Z_2$-coefficients  are non-trivial: In fact, the  cup-length of this space  is at least $6$. And the same is true for any subspace of $\mathcal E^{\infty}$ that contains  $\overline{\mathcal E_0\cup \mathcal E_1}\cap\mathcal E^{2\pi}$.

\end{thm}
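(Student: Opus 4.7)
The plan is to construct an explicit $\mathbb{RP}^6$-family $\Phi:\mathbb{RP}^6\to X$, where $X:=\overline{\mathcal E_0\cup\mathcal E_1}\cap\mathcal E^{2\pi}$, that is a $6$-sweepout in the Almgren--Pitts sense, and then read off the cup-length statement by naturality of the cup product.

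The natural map $\iota:\mathcal E^\infty\to \mathcal Z_2(\mathbb B^3,\partial\mathbb B^3;\Z_2)$, sending a surface to its mod-$2$ cycle, is continuous for the topology on $\mathcal E$ described above. Almgren's theorem identifies $H^*(\mathcal Z_2;\Z_2)=\Z_2[\bar\lambda]$ with a single generator $\bar\lambda$ in degree $1$. Setting $\lambda:=(\iota|_X)^*\bar\lambda\in H^1(X;\Z_2)$, it suffices to prove $\lambda^6\ne 0$: this yields cup-length at least $6$ on $X$, and hence on any subspace $X\subset Y\subset \mathcal E^\infty$, since $\Phi$ factors through $Y$ and naturality of the cup product transfers the non-vanishing to $H^*(Y;\Z_2)$.

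The workhorse is the construction of $\Phi$ with $(\iota\circ\Phi)^*\bar\lambda=\tau$, the generator of $H^1(\mathbb{RP}^6;\Z_2)$; naturality then gives $\Phi^*\lambda^6=\tau^6\ne 0$. I would look for an antipodally-equivariant family $a\in S^6\mapsto p_a\in C^\infty(\R^3)$, and set $\Phi([a])$ to be the closure in $\mathcal E$ of $\{p_a=0\}\cap\mathbb B^3$. Low-degree choices---say, $p_a$ a linear combination of $7$ polynomials of degree $\le 2$ or of low-order spherical harmonics---keep generic zero sets at genus $\le 1$ on their smooth locus, while degenerate parameters can be accommodated via neck-pinches, which give continuous paths in $\mathcal E$. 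The $1$-sweepout property itself reduces to showing that, for almost every $a$, the zero set has odd mod-$2$ intersection with a fixed interior arc transverse to the generic member of the family.

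The delicate point is the area estimate. A naive quadratic parametrization fails at, e.g., $p(x,y,z)=x^2+y^2-\tfrac12$, whose zero set is the inscribed cylinder of radius $1/\sqrt 2$ in $\mathbb B^3$, of area exactly $2\pi$: the threshold is \emph{sharp} and must be beaten \emph{uniformly} in the parameter. To push every area strictly below $2\pi$ one will likely have to reparametrize---using neck-pinches to excise area before it accumulates on cylinder-like members, or blending the family with a lower-area $k$-sweepout for $k<6$ so that the high-area configurations are avoided on a subfamily of the right codimension. This blend, together with the accompanying uniform area estimate, is what I expect to be the main technical obstacle; once it is secured, the cohomological conclusion is a one-line consequence of naturality.
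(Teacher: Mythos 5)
Your reduction is exactly the paper's: Theorem \ref{type} follows by factoring a $6$-sweepout through $X=\overline{\mathcal E_0\cup\mathcal E_1}\cap\mathcal E^{2\pi}$, pulling back the Almgren generator $\bar\lambda$, and using naturality of the cup product to pass to any larger subspace of $\mathcal E^\infty$. That part is fine. The gap is that everything else --- the actual construction of a $6$-sweepout that is simultaneously a genus $\le 1$ family with only isolated singularities and with area \emph{uniformly} below $2\pi$ --- is the entire content of the theorem (it is Theorem \ref{6sweep} in the paper), and your sketch of it does not survive scrutiny. A $7$-dimensional linear system of quadrics parametrized by $\mathbb{RP}^6$ cannot work as stated: if the system contains $x^2+y^2+z^2$ and $1$ it contains spheres of area arbitrarily close to $4\pi$; if it contains enough to be a $6$-sweepout it will contain degenerate members such as pairs of intersecting planes, which have a singular \emph{line} and therefore do not even lie in $\mathcal E$ (which requires finitely many singular points), let alone in $\overline{\mathcal E_0\cup\mathcal E_1}$; and, as you yourself observe, the cylinder sits exactly at the threshold $2\pi$. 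Your proposed remedies --- ``excise area via neck-pinches'' or ``blend with a lower-area $k$-sweepout'' --- are not arguments; it is precisely unclear that one can lower the area on the bad locus without destroying the class $\lambda^6$, and no mechanism is given.

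For comparison, the paper's resolution is genuinely different in two respects. First, the family is \emph{not} a linear system: it is the orbit of the saddle $\{x^2-y^2+z=0\}$ under translation, scaling, and rotation, parametrized by the $7$-dimensional quotient $\frac{\mathbb{RP}^4\times SO(3)}{D_2}$; because this is not $\mathbb{RP}^6$, the $6$-sweepout property is not automatic and requires a substantial cohomology computation (Proposition \ref{6sweepout}, using the ring structure of $H^*(S^3/Q_8;\Z_2)$) --- indeed the unquotiented family on $\mathbb{RP}^4\times SO(3)$ is \emph{not} a $6$-sweepout. Second, the sharp threshold is handled by showing that $2\pi$ is attained only at the intersecting planes $\{x^2-y^2=0\}$, which are the strict global maximum of area over the saddle family (Proposition \ref{globalmax}), and then desingularizing by adding a small $a_5z^3$ term, which both removes the singular line (leaving at most one isolated singularity) and strictly \emph{decreases} area near the maximizer (Proposition \ref{localmax}, a delicate first-variation estimate). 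Without some substitute for these three ingredients --- the sweepout verification on a non-projective parameter space, the identification of the area maximizer, and the area-decreasing desingularization --- your proposal does not yield the theorem.
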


Note that $\overline{\mathcal E_0\cup \mathcal E_1}$ denotes the closure of $\mathcal E_0\cup \mathcal E_1$ in $\mathcal E$,  and the {\it cup-length} of a space $X$ is defined as the maximum number of elements in the cohomology ring of $X$ with degree at least 1 such that their cup product is non-trivial. We remark that $2\pi$ is twice the area of the equatorial disk in $\mathbb B^3$.

Let us mention the following results. In his celebrated work \cite{Hat83}, Hatcher proved the Smale conjecture, implying that the space  of smoothly  embedded  2-spheres in the (round) 3-sphere deformation retracts to the subspace of great 2-spheres, which is homeomorphic to $\RP^3$ and thus has cup-length 3. Moreover, based on Marques-Neves' ground-breaking resolution of the Willmore conjecture \cite{MN14}, Nurser showed that the space of {\it flat 2-cycles} in the unit round 3-sphere with area at most $2\pi^2$ (which is the area of the Clifford torus) has cup-length in $\Z_2$-coefficients at least 7 \cite{Nur16}.

Theorem \ref{type}   follows immediately from the  result below, of which the terminologies will be defined precisely in \S \ref{minmax}. 

\vbox{
\begin{thm}\label{6sweep}
There exists in the Euclidean unit $3$-ball a  family $\Psi$ of surfaces such that:
\begin{enumerate}[label=(\Alph*)]
    \item \label{cond5sweep} $\Psi$ is a $6$-sweepout in the sense of Almgren-Pitts min-max theory.
    \item \label{condgenusbdd} $\Psi$ is a smooth family of surfaces with genus at most $1$, in the sense of  Simon-Smith min-max theory.
    \item \label{condareabdd} The  area of each element in $\Psi$ is less than $2\pi$.
\end{enumerate}
\end{thm}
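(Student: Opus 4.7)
The plan is to exhibit $\Psi$ as an explicit smooth family parametrized by $X = \RP^6$ --- a space whose $\Z_2$-cohomology ring has cup-length exactly $6$ --- and then verify \ref{cond5sweep}, \ref{condgenusbdd}, and \ref{condareabdd} one at a time. The most natural ansatz is to realize $\Psi$ as the level-set family of a $7$-dimensional vector space $V$ of smooth functions on $\mathbb B^3$: writing $X = \mathbb P(V)$, set $\Psi([f]) := \{f=0\}\cap\mathbb B^3$, regarded as a mod-$2$ relative cycle (equivalently $\partial(\{f>0\}\cap\mathbb B^3)$ when $0$ is a regular value of $f$). The tension is that $V$ must be rich enough for the generator of $H^1$ of the space of relative cycles to pull back to the generator of $H^1(\RP^6;\Z_2)$, yet simple enough for generic zero sets to have genus at most $1$, and geometrically tight so that each zero set has area strictly below $2\pi$.

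My first step would be to fix the structure of $V$. A natural starting point is the $4$-dimensional space of polynomials of degree at most $1$: its projectivization parametrizes the classical $3$-sweepout of $\mathbb B^3$ by planar disks, every slice being a genus-$0$ disk of area at most $\pi$, consuming only half of the area budget. To boost cup-length from $3$ to $6$ I would adjoin three further polynomials --- presumably quadratic, and possibly harmonic or satisfying some orthogonality relation --- whose contribution gives zero sets of genus at most $1$ (ellipsoidal cross-sections, catenoid-like necks, or sheets of one-sheeted hyperboloids) with areas stably under $2\pi$. Continuity of $\Psi$ in the topology on $\mathcal E$ described in the introduction would follow from standard graphical $C^\infty$-convergence of polynomial level sets, with the allowed singularities appearing via neck-pinching on the discriminant locus of $V$.

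Granted such a $V$, the verification of \ref{cond5sweep} reduces to a cohomology computation: if $V$ contains the constants, then the $1$-parameter subfamily $c\mapsto\{g=c\}\cap\mathbb B^3$ for a generic $g\in V$ restricts $\Psi$ to a classical $1$-sweepout of $\mathbb B^3$, which shows that $\Psi^*\bar\lambda$ is the generator of $H^1(\RP^6;\Z_2)$ (where $\bar\lambda$ denotes the generator of $H^1$ of the space of cycles). Since $\RP^6$ has cup-length $6$ in $\Z_2$-cohomology, the $6$-sweepout property follows. Condition \ref{condgenusbdd} is then dictated by the degree of $V$: any smooth quadric meets $\mathbb B^3$ in a surface of genus at most $1$, and the Simon--Smith framework accommodates the neck-pinching degenerations on the discriminant locus. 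Condition \ref{condareabdd} reduces to a uniform area estimate on polynomial level sets over the unit sphere of $V$.

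The main obstacle I foresee is the area bound \ref{condareabdd}. Since $2\pi$ is exactly twice the equatorial disk area, the margin is razor-thin: any slice resembling ``two nearly parallel disks'' is only marginally below the threshold, and the added quadratic perturbations must be tuned precisely to avoid area blowup. Naive polynomial bases almost certainly produce slices with area above $2\pi$, so either $V$ must be designed with a nontrivial orthogonality or scaling constraint, or else the parametrization must be composed with a diffeomorphism of $\mathbb B^3$ (or an area-based pull-tight step) that contracts the offending slices. Carrying out such a modification while preserving both the cohomological contribution demanded by \ref{cond5sweep} and the genus bound \ref{condgenusbdd} is, I expect, the essential technical core of the proof.
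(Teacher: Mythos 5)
There is a genuine gap, and the route you sketch is not the one the paper takes. The most concrete problem is condition \ref{condgenusbdd}: a $7$-dimensional linear system $V$ of polynomials of degree $\leq 2$ necessarily contains, in its projectivization, degenerate quadrics of rank $\leq 2$, i.e.\ pairs of intersecting planes and double planes (the rank-$\leq 2$ locus has codimension $3$ in the $\mathbb P^9$ of all quadrics, so a $\mathbb P^6$ meets it in a $3$-dimensional family). Such zero sets have \emph{singular lines}, which are not allowed in the Simon--Smith definition of a smooth family (only finitely many point singularities are permitted), and double planes additionally break the continuity of the cycle map. So ``the Simon--Smith framework accommodates the degenerations on the discriminant locus'' is false as stated; this is exactly the obstruction the paper spends Step 2 of its proof overcoming, by adding a small $a_5z^3$ term to desingularize the intersecting planes into surfaces with at most one isolated singularity (this is also where the genus-$1$ slices come from --- pure quadrics intersected with $\mathbb B^3$ only give genus $0$). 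Once you leave linear systems to do this, the parameter space is no longer $\RP^6$ and the cup-length argument no longer comes for free. Secondly, you correctly flag the area bound \ref{condareabdd} as the essential difficulty but offer no mechanism to establish it; since $\{x^2-y^2=0\}\cap\mathbb B^3$ already has area exactly $2\pi$, any family containing nearby slices sits at the threshold, and ``tuning'' or ``composing with a diffeomorphism'' is not an argument.

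For comparison, the paper's construction is genuinely nonlinear: it takes all scalings, translations and rotations of the single saddle $\{x^2-y^2+z=0\}$, parametrized by the $7$-dimensional quotient $\frac{\RP^4\times SO(3)}{D_2}$ (the $D_2$-quotient, coming from the saddle's dihedral symmetry, is what makes the $SO(3)$-loops into $1$-sweepouts), and then perturbs by $a_5z^3$. The $6$-sweepout property is not a one-line cup-length observation but a bespoke computation of $H^*\!\left(\frac{\RP^4\times SO(3)}{D_2};\Z_2\right)$ via Poincar\'e duals of explicit subbundles (Proposition 3.2, \S 4.1). The area bound is proved in two sharp steps: a global estimate showing the saddle family's area is maximized, at value $2\pi$, exactly at the intersecting planes (Proposition 3.5, via the ruled-surface structure of the saddle), and a first-variation computation showing the $z^3$-desingularization strictly decreases area near that maximum (Proposition 3.6). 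Your proposal contains neither ingredient, and both are essential.
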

}

Theorem \ref{6sweep} also gives the following result immediately.

\begin{cor}\label{6width}
The Almgren-Pitts $6$-width of the Euclidean unit $3$-ball is less than $2\pi$. 
\end{cor}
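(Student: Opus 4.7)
The plan is to deduce the corollary by simply reading it off from Theorem \ref{6sweep}. Recall that by definition,
$$\omega_6(\mathbb B^3) = \inf_{\Phi} \sup_{x \in \dmn(\Phi)} \Mb(\Phi(x)),$$
where the infimum runs over all Almgren-Pitts $6$-sweepouts $\Phi$ and $\Mb$ denotes mass.

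First, I would take $\Phi = \Psi$, the family provided by Theorem \ref{6sweep}. By condition \ref{cond5sweep}, $\Psi$ is an admissible $6$-sweepout, so $\omega_6(\mathbb B^3) \le \sup_x \Mb(\Psi(x))$. Next, condition \ref{condgenusbdd} ensures that each $\Psi(x)$ is a properly embedded smooth surface away from finitely many points, and the mass of the $\mathbb Z_2$-flat chain associated to such a surface equals its $2$-dimensional Hausdorff area. Hence by condition \ref{condareabdd}, $\Mb(\Psi(x)) = \area(\Psi(x)) < 2\pi$ for every parameter $x$.

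Finally, I would upgrade this pointwise strict inequality to a uniform one. Since the domain of an Almgren-Pitts sweepout is (a subcomplex of) a finite cubical complex, hence compact, and since the area functional is continuous along smooth Simon-Smith families (condition \ref{condgenusbdd}), the supremum $\sup_x \Mb(\Psi(x))$ is attained. Combining with the pointwise strict bound yields $\omega_6(\mathbb B^3) \le \sup_x \Mb(\Psi(x)) < 2\pi$.

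There is no substantive obstacle here: the corollary is a one-line bookkeeping consequence of Theorem \ref{6sweep} once the definition of the width is unwound. All the real work goes into constructing a single family $\Psi$ that simultaneously satisfies the three conditions \ref{cond5sweep}--\ref{condareabdd}, which is precisely the content of the theorem.
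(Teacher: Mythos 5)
Your proposal is correct and follows essentially the same (one-line) argument as the paper: take the family $\Psi$ from Theorem \ref{6sweep}, use condition \ref{cond5sweep} to see it is an admissible competitor and condition \ref{condareabdd} to bound its maximal area below $2\pi$. Your additional remark that compactness of the parameter space and continuity of the area along the smooth family upgrade the pointwise strict bound to a uniform one is a valid and worthwhile clarification that the paper leaves implicit.
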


Currently, the Almgren-Pitts widths of $\mathbb B^3$ are not well-understood: While the first three widths are $\pi$ and are detected by the equatorial disk (since the collection of flat disks in $\mathbb B^3$ is a 3-sweepout), the fourth already seems to be unknown.
Regarding computations of  Almgren-Pitts widths of other manifolds, see also \cite{Aie19,BL22,CM21,Don22,Zhu22}.

Let us now turn to the other side of the story: Free boundary minimal surfaces in the $\mathbb B^3$. In recent years, besides the two most basic examples, the equatorial disk and the critical catenoid, an abundance of free boundary minimal surfaces in $\mathbb B^3$ were constructed. For example, by solving extremal eigenvalue problems, Fraser-Schoen constructed examples with genus 0 and arbitrary number of boundary components \cite{FS16}. Using gluing techniques, Kapouleas-Li constructed embedded free boundary minimal surfaces of large genus that desingularize the union of the equatorial disk and the critical catenoid \cite{KL17}. (See  \cite{CFS20,CSW22,FPZ17,KM20,KZ21,Ket16,Ket16b,KW17} for more examples.)

We will use  min-max theory to produce a free boundary minimal surface. The advantage of this approach is that one can upper bound the Morse index of the minimal surface because of the work of Marques-Neves \cite{MN16}. In general, Morse index is difficult to compute. For example, to our best knowledge, in $\mathbb B^3$ the only embedded free boundary minimal surfaces  whose index are known are the equatorial disk and the critical catenoid: They have index $1$ and $4$ respectively \cite{Dev19,SZ19,Tra20}. In addition, from the recent resolution of the multiplicity one conjecture in the free boundary setting by Sun-Wang-Zhou \cite{SWZ20} based on the work of Zhou  \cite{Zho20}, we know there exists a sequence $\{\Sigma_k\}$ of embedded free boundary minimal surfaces in $\mathbb B^3$ with area growth of order $k^{1/3}$ and index at most $k$. However, using the Almgren-Pitts min-max theory, one cannot control the genus of the surfaces. In this paper, we apply the Simon-Smith min-max theory to the family $\Psi$ in Theorem  \ref{6sweep} to construct an  example with index, genus, and area bound:

\begin{thm}\label{min1} There exists in the Euclidean unit $3$-ball an embedded free boundary minimal surface  with genus $0$ or $1$, Morse index $4$ or $5$, and area in the range $(\pi,2\pi)$, that is not the equatorial disk or the critical catenoid.
\end{thm}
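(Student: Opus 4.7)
The plan is to apply the free boundary Simon--Smith min-max procedure to the family $\Psi$ constructed in Theorem \ref{6sweep}, and to read off the three desired properties (genus, area, index) of the resulting minimal surface from the three conditions \ref{cond5sweep}--\ref{condareabdd} on $\Psi$.

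Conditions \ref{condgenusbdd} and \ref{condareabdd} are exactly what the Simon--Smith framework requires: a smooth sweepout of genus at most $1$ with a strict area bound. Running min-max on the Simon--Smith saturation $\Pi$ of $\Psi$ should produce an embedded free boundary minimal surface $\Sigma\subset\mathbb B^3$ realizing the width $W(\Pi)\le\sup_\Psi\area<2\pi$, and the De Lellis--Pellandini type genus bound, in its free boundary adaptation, yields $\genus(\Sigma)\le 1$. For the area lower bound I would use condition \ref{cond5sweep}: since $\Psi$ is an Almgren--Pitts $6$-sweepout, $W(\Pi)\ge \omega_6(\mathbb B^3)$. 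The $3$-sweepout of oriented flat equatorial disks gives $\omega_1=\omega_2=\omega_3=\pi$, and a Lusternik--Schnirelmann argument combined with the fact that the equatorial disk is the unique free boundary minimal surface in $\mathbb B^3$ of area $\pi$ forces $\omega_4>\pi$, hence $\area(\Sigma)>\pi$. In particular $\Sigma$ is not the equatorial disk.

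For the index estimate, I would combine the free boundary multiplicity-one theorem of Sun--Wang--Zhou \cite{SWZ20} with Marques--Neves-type index estimates (in their free boundary version) to obtain $\ind(\Sigma)\le 6$, then refine this down to $\le 5$ by locating a ``redundant'' direction in the construction of $\Psi$ whose deformation keeps the area bound strict and therefore does not count against the Morse index. The lower bound $\ind(\Sigma)\ge 4$ would follow from the classification of low-index embedded free boundary minimal surfaces in $\mathbb B^3$: the equatorial disk is the only one known to have index $\le 3$, and the area bound $\area(\Sigma)>\pi$ excludes it.

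The main obstacle is ruling out the critical catenoid, which has genus $0$, index $4$, and area in $(\pi,2\pi)$, and so satisfies every numerical bound above. I expect to overcome this by producing, at the level $W(\Pi)$, a second min-max critical point distinct from the critical catenoid---either via a Lusternik--Schnirelmann gap argument exploiting strict inequalities among $\omega_4,\omega_5,\omega_6$, or by observing that a min-max sequence drawn from $\Psi$ (whose elements generically carry positive genus or several boundary components) cannot converge with multiplicity one to the critical catenoid without violating the topological structure encoded in $\Psi$. Either route then selects the desired $\Sigma$ with $\genus(\Sigma)\in\{0,1\}$, $\ind(\Sigma)\in\{4,5\}$, area in $(\pi,2\pi)$, and $\Sigma\notin\{D,\mathrm{CC}\}$.
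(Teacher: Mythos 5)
Your overall strategy---run free boundary Simon--Smith min-max on $\Psi$ and read off genus, area, and index from conditions \ref{cond5sweep}--\ref{condareabdd}---is the right starting point, but the two hardest steps are left as hopes rather than arguments, and one is set up in a way that cannot work. First, the index upper bound. Min-max applied to the full family $\Psi$, whose parameter space $\frac{\RP^4\times SO(3)}{D_2}$ is $7$-dimensional, only gives $\ind\le 7$; there is no mechanism by which multiplicity one plus the Marques--Neves estimate yields $\le 6$, and ``locating a redundant direction'' is not an argument. The device used in the paper is to restrict $\Psi$ to a $5$-skeleton $\Psi^{(5)}$ of its parameter space: a $6$-sweepout restricted to a $5$-skeleton is still a $5$-sweepout, so the width is positive, and min-max on a $5$-dimensional family gives $\ind\le 5$ outright. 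Relatedly, multiplicity one is not obtained from Sun--Wang--Zhou---the multiplicity one conjecture is open in the Simon--Smith setting, as the paper notes---but from the elementary observation that the width is below $2\pi$ while every free boundary minimal surface in $\mathbb B^3$ has area at least $\pi$ (Fraser--Schoen), so no multiplicity $\ge 2$ can occur.

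Second, ruling out the critical catenoid. Your first suggested route (Lusternik--Schnirelmann) is the correct one, but you do not supply the geometric input that makes it run: the set $\mathcal C$ of critical catenoids is an $\RP^2$ that contracts inside the cycle space to the empty cycle (shrink each catenoid to its axis), so no map into a small $\Fb$-neighborhood of $\mathcal C$ can be a $1$-sweepout. Splitting the domain of a minimizing sequence into the part near $\mathcal C$ and its complement, the complement must therefore carry a $4$-sweepout; if every min-max limit were a critical catenoid, the restricted families could be homotoped strictly below the width, and re-running min-max on the resulting $4$-sweepout (and repeating the same splitting for the $\RP^2$ of equatorial disks, where no $3$-sweepout can live and no $1$-sweepout has maximal area below $\pi$) eventually produces the desired surface. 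Your alternative route---that a min-max sequence drawn from $\Psi$ ``cannot converge to the critical catenoid without violating the topological structure''---has no support; nothing in the construction of $\Psi$ forbids such convergence. Finally, the lower bounds $\area>\pi$ and $\ind\ge 4$ do not require any width estimate such as $\omega_4>\pi$: once the limit is known not to be the equatorial disk, they follow directly from the Fraser--Schoen/Brendle area rigidity and from the Devyver/Tran index computations, respectively.
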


In fact, using the results of Sargent \cite{Sar17} and Ambrozio-Carlotto-Sharp \cite{ACS18} that lower bound the index of a free boundary minimal surface by its genus and number of boundary components, we know that the surface in Theorem \ref{min1} has at most 16 boundary components. But we believe this bound is far from optimal (see \S \ref{openq} below). We also note that, since we have to prove the index bound,  we cannot use the equivariant min-max theory of Ketover \cite{Ket16}. 

\begin{rmk} \label{cfs}
We remark that  Carlotto-Franz-Schulz \cite{CFS20} showed,  using equivariant min-max theory, there exists a free boundary minimal surface in $\mathbb B^3$
that has genus 1, area less than $3\pi$, a connected boundary,   and symmetry group $D_2$, where $D_2\subset SO(3)$ denotes the dihedral group with four elements (see the Geometric Analysis Gallery by Schulz \cite{Sch}).
In fact, as we will see in \S \ref{cfsproof}, Theorem \ref{6sweep} can reproduce their result and slightly improve the area bound from $3\pi$ to $2\pi$.
\end{rmk}

The family $\Psi$ in Theorem \ref{6sweep} can be modified to become a desirable 6-sweepout in $\R^3$ equipped with the Gaussian metric $\frac 1{4\pi} e^{-|{\bf x}|^2/4}g_0$, in which $g_0$ denotes the Euclidean metric, allowing one to construct a  self-shrinker with genus, index and Gaussian area control. However,  the Gaussian metric has a singularity at infinity, which  poses some challenges in carrying out the min-max theory.  We plan to address this in our upcoming work.

\subsection{Open questions.} \label{openq}
Regarding Theorem \ref{type}, it would be interesting to change the genus 0 and 1 constraint, the area bound  $2\pi$, or the ambient space $\mathbb B^3$ (to  the round 3-sphere for example), and investigate the topology of the corresponding space of surfaces. It will be nice to have more examples of  $k$-sweepouts for $k>6$ that are  smooth families. And for each $k$, among $k$-sweepouts $\Phi$ that are  smooth families, is there a non-trivial lower bound for the maximum of the genus of elements in $\Phi$?

We conjecture that the free boundary minimal surface in Theorem \ref{min1}, denoted $\Sigma$, has index 5. One can also ask if $\Sigma$ has the third lowest  area among all free boundary minimal surfaces in $\mathbb B^3$, after the equatorial plane and the critical catenoid. Moreover, we speculate that $\Sigma$ is the same as the free boundary minimal surface constructed by Carlotto-Franz-Schulz \cite{CFS20} mentioned  in Remark \ref{cfs}.

Concerning the Almgren-Pitts min-max theory in $\mathbb B^3$, we conjecture the  4-width is detected by the critical catenoid $\mathbb K$. In particular, showing the 4-width is at least $\area(\mathbb K)$ seems challenging, as it may depend on the conjecture that the second least area of an immersed free boundary minimal surface in $\mathbb B^3$
is realized by
the critical catenoid \cite[\S 7]{Li19}. As for the 5-width and the 6-width, it will be interesting to know if they are detected by the free boundary minimal surface  of Theorem \ref{min1}.

\subsection{Overview of  proofs}
Let us outline the construction of the smooth family $\Psi$ in Theorem \ref{6sweep}. We first consider the saddle surface $\{x^2-y^2+z=0\}$ in $\R^3$, and then translate, rescale, and rotate it arbitrarily: We even allow the scaling factor to be $0$ or $\pm\infty$. Then we collect all such surfaces, and it turns out this collection can be  parametrized by a  7-dimensional  quotient space of some $D_2$-action on $\RP^4\x SO(3)$. This is actually due to the $D_2$-symmetry of the saddle. However, this collection contains intersecting planes like $\{x^2-y^2=0\}$, the blow down of the saddle, which has a singular line and thus is not allowed in the Simon-Smith setting. To resolve this, we {\it desingularize the intersecting planes by adding a small $z^3$ term to their defining equations} (e.g. see Figure \ref{fig:phi1} and Table \ref{table::openup}), so that only isolated singularities appear. Finally, we intersect all surfaces with $\mathbb B^3$ to define $\Psi$.

 Theorem \ref{type} is an immediate consequence of Theorem \ref{6sweep}: See \S \ref{typeproof}.

Finally, for Theorem \ref{min1}, we will use the smooth family $\Psi$ in Theorem \ref{6sweep} as follows. Let $\Psi^{(5)}$ denote the subfamily of $\Psi$ parametrized by a 5-skeleton of the parameter space of $\Psi$. By applying the Simon-Smith min-max theorem to $\Psi^{(5)}$, we obtain a free boundary minimal surface $\Gamma$ with genus at most 1, index at most 5, and area less than $2\pi$. Note that, although it is not known if the multiplicity one conjecture holds in the Simon-Smith setting, we can  guarantee that $\Gamma$  has multiplicity one because $\area(\Gamma)<2\pi$ and the least possible area of a free boundary minimal surface in $\mathbb B^3$ is $\pi$. Then by the fact that $\Psi$ is a 6-sweepout and topological arguments of Lusternik-Schnirelmann, we show that the method above, with some modifications, produces a  free boundary minimal surface with the desired properties that is not the equatorial disk or the critical catenoid.

\subsection{Organization}
We introduce some   preliminaries in \S \ref{minmax}, and in \S \ref{mainproof} prove the  results in \S \ref{intro}. The proofs of some propositions used in \S \ref{mainproof} will be postponed to \S \ref{lemmaproof}.

\subsection*{Acknowledgment}
The author is very grateful to his advisor Andr\'e Neves for the constant support and patient guidance throughout the progress of this work. He would also like to thank  DeVon Ingram, Daniel Mitsutani, Chi Cheuk Tsang,  and Ao Sun for the  helpful conversations, and fedja regarding the proof of Proposition \ref{globalmax}. He  also thanks Danny Calegari for his comments.

\section{Preliminaries}\label{minmax}

In this section, we first discuss more about the space $\mathcal E$ introduced in \S 1, and then state some preliminaries about min-max theory, in both the  Almgren-Pitts setting \cite{Alm62,Alm65,Pit81} and  the Simon-Smith setting \cite{CD03,Smi82}.

\subsection{About the space $\mathcal E$.} \label{trivial}
A smooth embedded surface $S$ in $\mathbb B^3$ is said to be {\it properly embedded} if $\partial S=S\cap\partial\mathbb  B^3$ and $S$ meets $\partial \mathbb B^3$ transversely along $\partial S$. By definition, $\mathcal E$ consists of closed sets $S\subset \mathbb B^3$ such that there exists a finite set $P$ such that $S\backslash P$ is a smooth and properly embedded surface. (Note that  $\partial(S\backslash P)$  does not include $P$.)
Now, for any open set $U\subset\subset \mathbb B^3\backslash P$ (meaning  $\overline{U}\subset \mathbb B^3\backslash P$), $\epsilon>0$, and non-negative integer $k$, denote by $B_{P,U,\epsilon,k}(S)\subset \mathcal E$ the subset of all surface $S'\in\mathcal E$  such that $S'\backslash P$ is smooth and properly embedded and is $\epsilon$-close to $S$  in the graphical $C^k$-distance  within $U$. Then the following proposition tells us that the topology on $\mathcal E$ introduced in \S \ref{intro} is well-defined.
\begin{prop}
The subsets $B_{P,U,\epsilon,k}(S)\subset \mathcal E$ form a base.
\end{prop}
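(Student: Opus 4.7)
The plan is to verify the two defining axioms for a base of a topology: first, that every $S \in \mathcal{E}$ lies in at least one set $B_{P,U,\epsilon,k}(S')$; and second, that whenever $S$ lies in an intersection $B_{P_1,U_1,\epsilon_1,k_1}(S_1) \cap B_{P_2,U_2,\epsilon_2,k_2}(S_2)$, some basic set containing $S$ sits inside this intersection. Axiom 1 is immediate: given any $S \in \mathcal{E}$, take $P$ to be its (finite) singular locus together with any admissible $U,\epsilon,k$; then $S \in B_{P,U,\epsilon,k}(S)$.

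For axiom 2, I would take $P' := P_1 \cap P_2$, $U' := U_1 \cup U_2$, and $k' := \max(k_1, k_2)$. These data are admissible: since $S \setminus P_i$ is smooth for $i=1,2$, the singular locus of $S$ is contained in both $P_1$ and $P_2$, hence in $P'$, so $S \setminus P'$ is smooth and properly embedded; and $\overline{U'} = \overline{U_1} \cup \overline{U_2} \subset (\mathbb{B}^3 \setminus P_1) \cup (\mathbb{B}^3 \setminus P_2) = \mathbb{B}^3 \setminus P'$, so $U' \subset\subset \mathbb{B}^3 \setminus P'$. For any $S'' \in B_{P',U',\epsilon',k'}(S)$, the smoothness and proper embedding of $S'' \setminus P_i$ in $\mathbb{B}^3 \setminus P_i$ follows from the fact that $S'' \setminus P_i$ is the restriction of the smooth properly embedded surface $S'' \setminus P'$ to the open subset $\mathbb{B}^3 \setminus P_i \subset \mathbb{B}^3 \setminus P'$.

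The only mildly subtle point, and the one I expect to be the main technical step, is choosing $\epsilon' > 0$ so that such an $S''$ is automatically $\epsilon_i$-close to $S_i$ in $C^{k_i}$ on $U_i$. This is a triangle-inequality argument for graphical closeness: on $U_i \subset U'$, the surface $S$ is written as a normal graph over $S_i \cap U_i$ via a function $f_i$ with $\|f_i\|_{C^{k_i}} < \epsilon_i$, and $S''$ is written as a normal graph over $S \cap U_i$ via a function $g_i$ with $\|g_i\|_{C^{k'}} < \epsilon'$. Once $\epsilon'$ is small enough that $S''$ still lies inside a tubular neighborhood of $S_i$ on which the normal projection is a diffeomorphism, the composition reexpresses $S'' \cap U_i$ as a graph over $S_i \cap U_i$ by a function whose $C^{k_i}$-norm is bounded by $\|f_i\|_{C^{k_i}} + C_i \|g_i\|_{C^{k_i}}$, for a constant $C_i$ depending only on $S_i$, $U_i$, and $f_i$. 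Choosing $\epsilon'$ strictly smaller than $(\epsilon_i - \|f_i\|_{C^{k_i}})/C_i$ for $i = 1, 2$ then secures the desired inclusion, completing the verification.
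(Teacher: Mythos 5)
Your proposal is correct and takes essentially the same route as the paper: the same choices $P=P_1\cap P_2$, $U=U_1\cup U_2$, $k=\max\{k_1,k_2\}$, and $\epsilon$ sufficiently small. The paper leaves the "sufficiently small $\epsilon$" step implicit, whereas you spell out the triangle-inequality argument for graphical closeness; this is a harmless elaboration, not a different approach.
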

\begin{proof}
First, these subsets clearly cover $\mathcal E$. So it suffices to show that if  $ B_{P_1,U_1,\epsilon_1,k_1}(S_1)\cap B_{P_2,U_2,\epsilon_2,k_2}(S_2)$ contains some element $S$, then it contains some subset $ B_{P,U,\epsilon,k}(S)$. Indeed, one can just take $P:=P_1\cap P_2$, $U:= U_1\cup U_2$, $k:=\max\{k_1,k_2\}$, and $\epsilon>0$ to be sufficiently small. 
\end{proof}

We will mention some mostly obvious remarks.  First, $\mathcal E$ contains disconnected surfaces, and also the empty  surface $\emptyset$ and any finite sets of points tautologically.  Taking $P=\emptyset$ in (\ref{singsub}), we know  $\{\emptyset\}$ is an open subset of $\mathcal E$. However, $\emptyset\in\mathcal E$ is not an isolated point as for any $p\in\mathbb B^3$, all open neighborhoods of $\{p\}$ in $\mathcal E$ has $\emptyset$ as an element tautologically. Similarly, for any distinct points $p_1,p_2\in\mathbb B^3$, all open neighborhoods of $\{p_1,p_2\}$ in $\mathcal E$ has $\{p_1\}$ as an element, but not vice versa. Moreover, for any $p\in\mathbb B^3$, let $B_r(p)\subset\mathbb B^3$ be the ball centered at $p$ with radius $r$. Then for $n\geq 2$, $\partial B_{1/n}(p)\in\mathcal E$ and converge to $\{p\}$  (not $\emptyset$) as $n\to\infty$. Furthermore, the path $r\mapsto \partial B_r(0)$ of spheres in $\mathcal E$ for $r\in (0,2)$ is not well-defined at $r=1$, but by perturbing the spheres to ellipsoids, the path becomes well-defined and continuous.

\subsection{Simon-Smith min-max theory}\label{ssminmax} Let $M$ be a compact oriented Riemannian 3-manifold with strictly mean convex boundary.

\begin{defn}\label{smoothfamily}
Let $X$ be a compact $k$-dimensional cubical complex, called the {\it parameter space}. Suppose we have a map $\Phi$ assigning to each $x\in X$ a closed subset $\Phi(x)$ of $M$ such that:
\begin{enumerate}
    \item There exists a dense subset $Y\subset X$ of parameters such that:
    \begin{itemize}
        \item For each $x\in Y$, $\Phi(x)$  is an oriented, smooth, and properly embedded surface with boundary.
        \item For each $x\in X\backslash Y$, there exists a finite set $P(x)$ such that $\Phi(x)\backslash P(x)$ is a smooth and properly embedded surface with boundary.
    \end{itemize}
    Moreover, we require that $|P(x)|$ is  bounded independent of $x$. (We can say $P(x)=\emptyset$ for $x\in Y$ for convenience.)
    \item  $\Phi$ is continuous in the varifold topology.
    \item \label{localsmooth}
    For any $x_0\in X$ and open set $U\subset\subset M\backslash P(x_0)$ (i.e. $\overline{U}\subset M\backslash P(x_0)$), $\Phi(x)\to \Phi(x_0)$ in the graphical $C^\infty$-topology  in $U$ whenever $x\to x_0$.
    \item $\Phi(x)$ has genus at most $g$ for each $x\in Y$.
\end{enumerate}
Then we call $\Phi$ a {\it smooth family of surfaces with genus at most $g$}, or in brief, a {\it genus $\leq g$ smooth family}.
\end{defn} 

Note that when $\Phi(x)$ is disconnected, its genus is defined as the sum of the genus of each of its connected components.  For (3), $\Phi(x_0)$ meets $\partial \mathbb B^3$ transversely in $U$, thus the graphical convergence makes sense even near the boundary $\partial\Phi(x_0)$. Moreover, we required continuity in the varifold topology (see \cite{CFS20,Fra21}) instead of the Hausdorff topology because we want to allow a smooth family to contain empty sets: We will explain more about the minor variations between our definition of a smooth family and others' later in the proof of Theorem \ref{ssminmaxthm}.

Two smooth families $\Phi$ and $\Phi'$ parametrized by $X$ are said to be {\it homotopic} if 
 there exists a map $\psi\in C^\infty(X\times M,M)$ such that
$\psi(x,\cdot)\in \textrm{Diff}_0(M)$ for each $x$ (meaning each $\psi(x,\cdot)$ is homotopic via diffeomorphisms to the identity map), and $\psi(x,\Phi(x))=\Phi'(x)$ for each $x$.
Given a homotopy class $\Lambda$, its {\it width} is defined by
$$\LSS(\Lambda):=\inf_{\Phi\in \Lambda}\max_{x\in X}\area(\Phi(x)).$$ 
A sequence $\{\Phi_i\}$  in $\Lambda$ is said to be {\it minimizing} if $$\lim_{i\to\infty} \max_{x\in X} \area(\Phi_i(x))=\LSS(\Lambda).$$ 
If $\{\Phi_i\}$ is a minimizing sequence and we pick $x_i$ such that $$\lim_{i\to\infty} \area(\Phi_i(x_i))=\LSS(\Lambda),$$
then $\{\Phi_i(x_i)\}$ is called a {\it min-max sequence}. Furthermore, a minimizing sequence is {\it pulled-tight} if all its min-max sequences approach the set of stationary varifolds  in the varifold topology.
\begin{thm}\label{ssminmaxthm}
Let $\Lambda$ be a homotopy class of genus $\leq g$ smooth families parametrized by $X$. Then     there exists a pulled-tight minimizing sequence in $\Lambda$, which contains a min-max sequence  converging in  the varifold topology  to some varifold $V=\sum^N_{i=1}n_i\Gamma^i$, in which  $\Gamma^i$ are disjoint embedded free boundary minimal surfaces and $n_i$ are positive integers, such that:
\begin{itemize}
    \item $\norm{V}=\LSS(\Lambda)$. 
    \item $\ind(\spt(V))\leq\dim(X)$.
    \item $\displaystyle \sum_{\Gamma^i\;\;\mathrm{ orientable}} \genus(\Gamma^i)+\frac 12 \sum_{\Gamma^i\;\;\mathrm{ non\textrm{-}orientable}} (\genus(\Gamma^i)-1)\leq g$.
\end{itemize} 
\end{thm}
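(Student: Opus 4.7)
The plan is to assemble the theorem from four standard ingredients in the Simon-Smith free boundary min-max theory, adapted to the enlarged notion of smooth family in Definition \ref{smoothfamily}. I will describe each ingredient, flag the points where the extra flexibility of allowing finitely many singular points $P(x)$ needs care, and then indicate which step I expect to be the real obstacle.

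First, I would produce the pulled-tight minimizing sequence. Take any minimizing sequence $\{\Phi_i\}\subset \Lambda$. Following Pitts and De~Lellis--Pellandini, one can define a continuous ambient deformation $F_t\in \mathrm{Diff}_0(M)$ depending on the varifold $\Phi_i(x)$, built from the first variation of area so as to preserve the free boundary condition on $\partial M$ (using strict mean convexity of $\partial M$), and verify that it preserves the homotopy class and the genus bound $g$. Replacing $\Phi_i$ by $F_t\circ\Phi_i$ for an appropriate $t$ yields a pulled-tight minimizing sequence, along which every min-max sequence subsequentially converges in the varifold topology to a stationary integral varifold $V$ with $\norm V=\LSS(\Lambda)$ and free boundary on $\partial M$.

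Second, regularity. The heart is the almost-minimizing property: by the combinatorial argument of Pitts, adapted to parameter spaces $X$ of arbitrary dimension through the discretization of De~Lellis--Tasnady, one can extract a min-max sequence $\{V_j=\Phi_{i_j}(x_{i_j})\}$ which is almost-minimizing in sufficiently small annuli. The singular set $P(x)$ is finite with uniformly bounded cardinality, so in each annulus disjoint from the accumulation points of $\{P(x_{i_j})\}$ the standard replacement construction goes through. Applying the free boundary regularity of Li and Li--Zhou (with the interior theory of Pitts--Schoen--Simon) then shows $\spt V=\bigcup \Gamma^i$ is a disjoint union of embedded free boundary minimal surfaces in $M$, so $V=\sum n_i\Gamma^i$.

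Third, I would establish the genus bound via the Simon--Smith lower semicontinuity argument, in the form extended to the free boundary setting by Ketover. The key is that if $\Gamma^i$ appears in the limit with multiplicity $n_i$, then by the monotonicity of genus under graphical convergence (orientable case) and Ketover's parity refinement (non-orientable case), the genus of each component of $\Phi_{i_j}(x_{i_j})$ eventually dominates $\sum_{\mathrm{or}}\genus(\Gamma^i)+\frac12\sum_{\mathrm{n\text{-}or}}(\genus(\Gamma^i)-1)$, which is at most $g$ by hypothesis. The finite singular set $P(x)$ does not affect this because the genus bound on each component is obtained on the smooth part. Finally, for the index bound $\ind(\spt V)\le \dim X$, I would invoke the Marques--Neves deformation theorem in the form proved for smooth free boundary families (as used by Franz and others in the setting of \cite{SWZ20}): were $\ind(\spt V)$ strictly greater than $\dim X$, one could build, using the $(\dim X+1)$ negative eigenfunctions of the Jacobi operator with Robin boundary condition, a competitor family in $\Lambda$ with strictly smaller max area, contradicting minimality of $\LSS(\Lambda)$.

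The hardest step, I expect, is the interaction between the regularity/index arguments and the nonstandard parameter space: because a smooth family here is allowed to have finitely many singularities $P(x)$ and continuity is only in the varifold topology, one must carefully check that the Marques--Neves deformation can be performed within the class of smooth families in the sense of Definition \ref{smoothfamily} (not just among smooth maps), and that the almost-minimizing extraction is not spoiled by the points of $P(x)$ accumulating at a point of $\spt V$. This is a routine but delicate bookkeeping argument: one localises the deformation away from a small neighbourhood of $P(x)$ in which the area contribution is already controlled, and checks that the resulting perturbed family still lies in $\Lambda$ and still has genus at most $g$ on its smooth parameters.
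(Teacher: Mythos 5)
Your proposal is correct and follows essentially the same route as the paper: both assemble the theorem from the pull-tight procedure, the almost-minimizing extraction and free boundary regularity (Colding--De Lellis, Li, Franz), the De Lellis--Pellandini/Ketover genus bound, and the Marques--Neves index bound, with the same points of care about the varying finite singular sets $P(x)$ and the varifold-topology continuity. The only differences are in which specific references are invoked for the multi-parameter almost-minimizing extraction and the genus bound, which is immaterial.
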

\begin{proof}
It suffices to prove the following statements:
\begin{enumerate}
    \item There exists in $\Lambda$ a  pulled-tight minimizing sequence $\{\Phi_n\}$.
    \item There exists  a  function $r:M\to \R_{>0}$ and a min-max  sequence $\{\Phi_n(x_n)\}$ of the minimizing sequence above such that:
    \begin{itemize}
        \item For every $p\in M$, in every annulus  centered at $p$  with outer radius at most $r(p)$, $\Phi_n(x_n)$ is {\it $1/n$–almost minimizing} (see  \cite[Definition 3.2]{CD03}) when $n$ is large enough.
\item  In any such annulus, $\Phi_n(x_n)$  is smooth  when $n$ is large enough.
\item $\Phi_n(x_n)$ converges to a stationary varifold $V$.
    \end{itemize}
    \item $V$ has the  desired form $\sum^N_{i=1}n_i\Gamma^i$ mentioned above.
    \item The index bound.
    \item The genus bound.
\end{enumerate}

Item (1) follows from the pull-tight procedure in  \cite[\S 4]{CD03} of Colding-De Lellis. Item (2) follows from \cite[Proposition 5.1]{CD03} and its multi-parameter version \cite[Appendix]{CGK18}   by Colding-Gabai-Ketover. For the adaptation to the case of manifold with boundary, see \cite{Li15} by Li and \cite{Fra21} by Franz. Note the following differences between our setting and previous ones. First, our  parameter space $X$ is a cubical complex instead of a cube, but we can embed it into some   cube of high dimension so that the same proofs  work.  Second, even though unlike in \cite{CD03} we are   doing {\it non-relative} min-max theory, as we allow a homotopy to vary a smooth family on the {\it boundary} of its parameter space, the same  argument of \cite{CD03} is still applicable (in the Almgren-Pitts setting, the non-relative version was carried out in \cite{MN17}). Third, in our definition of a smooth family $\Phi$,  we allow the set $P(x)$ of singularities  of $\Phi(x)$ to vary as $x$ varies. However, we can still ensure  each $\Phi_n(x_n)$  to be  smooth in any small annulus  described  in (2). This is because, by passing to a subsequence, we can assume that $P(x_n)$ converges as $n\to\infty$ to some finite set $P$ in the Hausdorff topology, so that  our claim follows immediately  by choosing $r(p)$ to be small enough (see the last paragraph of \cite[\S 5]{CD03}).

As for item (3), the regularity of $V$ is due to  \cite[Theorem 7.1]{CD03} for the closed case and  \cite[Proposition 4.11]{Li15} for the free boundary case: Notice that we have assumed $\partial M$ to be strictly mean convex, which via the maximum principle guarantees that the interior of $V$ does not touch $\partial M$.
As for the index bound, it was first proven in the Almgren-Pitts setting, by Marques-Neves  \cite{MN16} in the closed case and Guang-Li-Wang-Zhou  \cite{GLWZ21} in the free boundary case, and then  adapted to the Simon-Smith setting by Franz \cite{Fra21}.
Finally, the genus bound is due to  \cite[Theorem 9.1]{Li15} by Li, based on \cite[Theorem 1.6]{DP10} by   De Lellis-Pellandini. We note that although the set $X\backslash Y$ of parameters that give non-smooth surfaces may not be finite, the proof of the genus bound (in \cite[\S 2.3]{DP10}) using Simon's lifting lemma \cite[Proposition 2.1]{DP10} is still valid using the fact that the complement $(X\backslash Y)^c=Y$ is dense by assumption. (See also \cite{Ket16b,Ket19} by Ketover, which provide a stronger genus bound for limits of min-max sequences of smooth surfaces.)
\end{proof}

\subsection{Almgren-Pitts min-max theory }\label{apminmax}  Let $M$ be a compact  $(n+1)$-dimensional Riemannian manifold with boundary. 
Let $\mathcal R_k(M;\Z_2)$ (resp. $\mathcal R_k(\partial M;\Z_2)$) be the set of $k$-dimensional rectifiable currents in $M$ (resp. $\partial M$)  with $\Z_2$-coefficients. For any $T\in \mathcal R_k(M;\Z_2)$ such that its support lies in $\partial M$, we define an equivalence relation by $T\sim S$ if $T-S\in \mathcal R_k(\partial M;\Z_2)$, and then denote by 
$\Zc_{k}( M,\partial M;\Z_2)$ the set of such equivalence classes. 
The three common topologies on $\Zc_{k}( M,\partial M;\Z_2)$ are given by the  {\it flat} metric $\Fc$,  the $\Fb$-metric, and the  {\it mass} $\Mb$ respectively: Since the definitions are standard, we refer the reader to, for example, \cite[\S 3]{GLWZ21} for the details. Note that by \cite[\S 3.3]{GLWZ21}, under the metric $\Fc$ or $\Mb$, $\Zc_{k}( M,\partial M;\Z_2)$ is  homeomorphic and isometric to the space of {\it relative $k$-cycles} considered in \cite[\S 2.2]{LMN18}.

Then by the Almgren isomorphism theorem \cite{Alm62} (see also \cite[\S 2.5]{LMN18}), if $H_n( M,\partial M;\Z_2)=0$, then when equipped with the flat topology, $\Zc_{n }( M,\partial M;\Z_2)$ is connected and weakly homotopic equivalent to $\R\mathbb P^\infty$. Thus we can denote  its cohomology ring by $\Z_2[\bar\lambda]$. Then an $\Fc$-continuous map $\Phi:X\to \Zc_{n }( M,\partial M;\Z_2)$, where $X$ is some cubical complex, is said to be a {\it $k$-sweepout} if $\Phi^*(\bar\lambda^k)\ne 0$. Let $\Pc_k$ be the set of all $\Fb$-continuous  $k$-sweepouts. 
Then, denoting by $\dmn(\Phi)$ the domain of $\Phi$, the {\it $k$-width} of $M$ is defined by $$\omega_k(M):=\inf_{\Phi\in\Pc_k}\max_{x\in \dmn(\Phi)}\Mb(\Phi(x)).$$
\begin{rmk}\label{equiv}
There is an equivalent characterization of $k$-sweepouts (see \cite[Definition 4.1]{MN17}): An $\Fc$-continuous map $\Phi:X\to\Zc_{n }( M,\partial M;\Z_2)$ is a $k$-sweepout if there exists an $\lambda\in H^1(X;\Z_2)$ such that:
\begin{itemize}
    \item $\lambda${ \it detects the 1-sweepouts}, i.e. for any cycle $\gamma:S^1\to X$, we have $\lambda(\gamma)\ne 0$ if and only if $\Phi\circ \gamma:S^1\to\Zc_{n }( M,\partial M;\Z_2)$ is a 1-sweepout.
    \item The cup product $\lambda^k\in H^k(X;\Z_2)$ is non-zero.
\end{itemize}
\end{rmk}

\section{Proofs of Main results}\label{mainproof}
In this section, we  prove the results stated in \S \ref{intro}. From now on, we denote by  $\mathcal Z$ the space $\Zc_{2 }(\mathbb B^3,\partial\mathbb B^3;\Z_2)$ with the flat topology.

\subsection{Proof of Theorem \ref{6sweep}}
We will construct the desired family $\Psi$ that satisfies  condition \ref{cond5sweep}, \ref{condgenusbdd}, and \ref{condareabdd}  of Theorem \ref{6sweep} in two steps: In step 1 we construct a 6-sweepout (condition \ref{cond5sweep}). Then in step 2, we modify it such that it  becomes, in addition, a genus $\leq 1$ smooth family (condition \ref{condgenusbdd}) with maximal area less than $2\pi$ (condition \ref{condareabdd}).

\medskip{\noindent \bf Step 1.}
We consider all scalings and translations of the saddle surface $\{x^2-y^2+z=0\}$ in $\R^3$, and then intersect them with $\mathbb B^3$.  Namely, we define a map $\Phi_{4}:\R\mathbb P^4\to\Zc$ by assign to each $a=[a_0:a_1:a_2:a_3:a_4]\in\RP^4$ the zero set of the polynomial
$$p_{a_0,a_1,a_2,a_3,a_4}(x,y,z):=a_0(x^2-y^2)+a_1x+a_2y+a_3z+a_4$$
in $\mathbb B^3$. And then we add in rotations. Namely, we define $\tilde \Phi_7:\mathbb{RP}^4\x SO(3)\to\Zc$ by assigning each $(a,Q)$ to the surface ``$\Phi_4(a)$ rotated by $Q^{-1}$", i.e.  the zero set of the polynomial $p_{a_0,a_1,a_2,a_3,a_4}(Q(x,y,z))$ in $\mathbb B^3$.

However, a loop in the $SO(3)$ factor does not produce a 1-sweepout (e.g. consider a disk rotating for $360^\circ$), and 
$\tilde\Phi_7$ is not yet a 6-sweepout. To get a 6-sweepout, one needs to take a quotient on the space $\RP^4\x SO(3)$ as follows.

We first observe $\{x^2-y^2+z=0\}$ has a dihedral symmetry: Let 
$$g_1:=\left(\begin{smallmatrix}
0 & 1 & 0\\ 
1 & 0 & 0\\ 
0 & 0 & -1
\end{smallmatrix}\right)\;\mathrm{  and  }\;\; g_2:=\left(\begin{smallmatrix}
0 & -1 & 0\\ 
-1 & 0 & 0\\ 
0 & 0 & -1
\end{smallmatrix}\right).$$
They are the $180^\circ$-rotation about the line  $\{z=0, x=y\}$ and $\{z=0, x=-y\}$ respectively. Then $D_2:=\{\id, g_1, g_2, g_1g_2\}$ is a dihedral group, which preserves $\{x^2-y^2+z=0\}$. Motivated by this, we define a $D_2$-action on $\RP^4$ by 
\begin{align*}
&g_1[a_0:a_1:a_2:a_3:a_4]:=[-a_0:a_2:a_1:-a_3:a_4],\\
&g_2[a_0:a_1:a_2:a_3:a_4]:=[-a_0:-a_2:-a_1:-a_3:a_4],
\end{align*}
and then a $D_2$-action on $\RP^4\x SO(3)$ by
\begin{align}\label{adef}
&g_1(a,Q):=(g_1a,g_1Q),\\
&g_2(a,Q):=(g_2a,g_2Q).\nonumber
\end{align}
The whole reason we define the action by (\ref{adef}) is to ensure the following:
\begin{prop}\label{phiprod}
For each $g\in D_2$ and $a\in\RP^4$,  $g^{-1}(\Phi_4(g(a)))=\Phi_4(a)$.
\end{prop}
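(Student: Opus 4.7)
The plan is a direct unpacking of definitions: the $D_2$-action on $\RP^4$ was manufactured precisely so that the coefficient relabeling compensates exactly for the spatial pullback by $g \in D_2$. First I would observe that every element of $D_2$ lies in $O(3)$ and therefore preserves $\mathbb{B}^3$, so the intersection with $\mathbb{B}^3$ in the definition of $\Phi_4$ commutes with the group action; hence $g^{-1}(\Phi_4(g\cdot a))$ is the zero set in $\mathbb{B}^3$ of the pullback polynomial $p_{g\cdot a}\circ g$, and the claim reduces to the polynomial identity
\[
p_{g\cdot a}(g(x,y,z)) \;=\; p_a(x,y,z) \qquad \text{for all } (x,y,z)\in\R^3.
\]

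Since $D_2=\{\id,g_1,g_2,g_1g_2\}$ is generated by $g_1,g_2$, it suffices to verify the identity on the two generators by direct substitution. For $g_1$: reading off the matrix, $g_1(x,y,z)=(y,x,-z)$, and by the definition of $g_1 \cdot a$ we have $p_{g_1\cdot a}(x,y,z) = -a_0(x^2-y^2) + a_2 x + a_1 y - a_3 z + a_4$; substituting yields
\[
p_{g_1\cdot a}(y,x,-z) = -a_0(y^2 - x^2) + a_2 y + a_1 x + a_3 z + a_4 = p_a(x,y,z).
\]
For $g_2$: $g_2(x,y,z)=(-y,-x,-z)$ and $p_{g_2\cdot a}(x,y,z) = -a_0(x^2-y^2) - a_2 x - a_1 y - a_3 z + a_4$, so
\[
p_{g_2\cdot a}(-y,-x,-z) = -a_0(y^2 - x^2) + a_2 y + a_1 x + a_3 z + a_4 = p_a(x,y,z).
\]

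For the remaining element $g_1 g_2$ I would not substitute but instead iterate what has already been proved: writing
\[
(g_1 g_2)^{-1}\bigl(\Phi_4((g_1 g_2)\cdot a)\bigr) = g_2^{-1}\bigl(g_1^{-1}\Phi_4(g_1\cdot(g_2\cdot a))\bigr) = g_2^{-1}\Phi_4(g_2\cdot a) = \Phi_4(a),
\]
where the second equality uses the $g_1$-case applied to $g_2\cdot a$ and the third uses the $g_2$-case. There is no real obstacle here; the only thing to be careful about is bookkeeping of signs in the coefficient action, and the verification confirms that the slightly asymmetric-looking rule \eqref{adef} is exactly what is forced by requiring the saddle polynomial to transform equivariantly.
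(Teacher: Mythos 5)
Your proof is correct and follows essentially the same route as the paper's: a direct substitution check on the generators $g_1,g_2$ showing that the coefficient action exactly compensates the spatial pullback, with the remaining element $g_1g_2$ handled (explicitly in your case, implicitly in the paper's) by composing the two generator cases. The only addition you make beyond the paper is spelling out that $D_2\subset O(3)$ preserves $\mathbb B^3$ so the intersection commutes with the action, which is a harmless and correct piece of bookkeeping.
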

\begin{proof}
The proof is straightforward: Let $a=[a_0:a_1:a_2:a_3:a_4]$. Then
\begin{align*}
    g_1^{-1}(\Phi_4(g_1(a)))&=\{-a_0(y^2-x^2)+a_2y+a_1x-a_3(-z)+a_4=0\}\cap\mathbb B^3,\\
    g_2^{-1}(\Phi_4(g_2(a)))&=\{-a_0((-y)^2-(-x)^2)-a_2(-y)-a_1(-x)-a_3(-z)+a_4=0\}\cap\mathbb B^3,
\end{align*}
which are both the same as $\Phi_4(a)$.
\end{proof}
As an immediate result,  $\tilde\Phi_7(g(a,Q))=\tilde\Phi_7(a,Q)$ for all $g,a,$ and $Q$, and hence one can pass to the quotient space to define a new collection $\Phi_7:\frac{ \mathbb{RP}^4\x SO(3)}{D_2}\to\Zc$. Note that $\Phi_7$ is $\Fc$-continuous clearly because it parametrizes the collection of all scalings, translations, and rotations of the saddle surface $\{x^2-y^2+z=0\}$, intersected with $\mathbb B^3$.
Then a crucial fact is:

\begin{prop}\label{6sweepout} $\Phi_7$ is a 6-sweepout.
\end{prop}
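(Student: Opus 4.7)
I plan to apply the equivalent characterization of $k$-sweepouts in Remark~\ref{equiv}. Setting $Y := \frac{\RP^4 \times SO(3)}{D_2}$ and $\lambda := \Phi_7^*\bar\lambda \in H^1(Y;\Z_2)$, the class $\lambda$ detects 1-sweepouts of $\Phi_7$ automatically by naturality, so the content of the proposition reduces to showing $\lambda^6 \ne 0$ in $H^6(Y;\Z_2)$.

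To analyze $\lambda$, I would first identify three 1-sweepout loops in $Y$. The \emph{translation} loop $\gamma_0(t) := [([0\!:\!0\!:\!0\!:\!\cos t\!:\!\sin t], I)]$ for $t \in [0,\pi]$, lying in the copy $\RP^4 \hookrightarrow Y$ at $Q = I$, maps under $\Phi_7$ to the standard sweepout by horizontal planes $\{z = -\tan t\}$. As noted in the paragraphs preceding the proposition, a pure loop in the $SO(3)$-factor fails to be a 1-sweepout (a $2\pi$-rotation sweeps the ball twice, trivially mod $2$), so the remaining loops must come from the $D_2$-quotient. For each $g \in \{g_1, g_2\}$, take $a_* := [1\!:\!0\!:\!0\!:\!0\!:\!0]$ (fixed by both $g_1$ and $g_2$, corresponding to the $D_2$-invariant union of perpendicular planes $\{x^2 = y^2\}$) and a path $Q_g : [0,1] \to SO(3)$ from $I$ to $g$; then $(a_*, I) \sim (a_*, g)$ in $Y$, so $\gamma_g(t) := [(a_*, Q_g(t))]$ is a loop. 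A direct swept-volume calculation shows $\Phi_7 \circ \gamma_g$ is a 1-sweepout: one of the two planes of $\{x^2 = y^2\}$ contains the axis of $g$ and rotates by $\pi$ to sweep $\mathbb B^3$ exactly once, while the other is perpendicular to the axis and stays fixed.

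Next I would compute $H^*(Y;\Z_2)$ using the Serre spectral sequence of the fiber bundle $\RP^4 \hookrightarrow Y \to SO(3)/D_2 = S^3/Q_8$, which has trivial $\Z_2$-monodromy on the fiber. From the extension $1 \to \pi_1(\RP^4 \times SO(3)) \to \pi_1(Y) \to D_2 \to 1$, computing $\gamma_g^2$ and $[\gamma_1,\gamma_2]$ via lifts to the universal cover $SU(2)$ of $SO(3)$ yields $\gamma_g^2 = [\gamma_1,\gamma_2] = \mu_\beta$, the generator of $\pi_1(SO(3)) = \Z_2$. Upon abelianization the commutator kills $\mu_\beta$, giving $H_1(Y;\Z_2) = \Z_2^3$ with basis $\{\mu_\alpha, \gamma_1, \gamma_2\}$, on which $\lambda$ evaluates to $(1,1,1)$. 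The extension of $\alpha$ to $Y$ (as the Poincar\'e dual of the $\mu_\alpha$-direction) forces the differential $d_2: H^1(\RP^4) \to H^2(SO(3)/D_2)$ to vanish; combined with the fact that the $E_2$-page's total dimensions are already palindromic $(1,3,5,6,6,5,3,1)$, consistent with Poincar\'e duality on the closed $7$-manifold $Y$, this strongly suggests the spectral sequence degenerates. Granting this, $H^*(Y;\Z_2) \cong H^*(SO(3)/D_2;\Z_2) \otimes H^*(\RP^4;\Z_2)$ as a graded vector space, and $\lambda$ identifies as $\alpha + (s + t)$, where $\alpha \in H^1(\RP^4;\Z_2)$ generates and $s, t \in H^1(SO(3)/D_2;\Z_2)$ are duals of $\gamma_1,\gamma_2$.

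The last step is to expand $\lambda^6 = (\alpha + s + t)^6$. Using the mod-$2$ binomials $\binom{6}{i} \pmod{2}$ together with $\alpha^5 = 0$ and $\dim SO(3)/D_2 = 3$, the only surviving term is $\alpha^4 (s + t)^2$. But $\Phi_7|_{SO(3)/D_2}: Q \mapsto Q^{-1}\{x^2 = y^2\}$ is a $3$-sweepout, because for any three generic points $p_1, p_2, p_3 \in \mathbb B^3$ and any partition into subsets of sizes $(2,1)$, there pass two perpendicular planes through the origin containing the two and the one point respectively (by a dimension count on the normal vectors); this gives $(s+t)^3 \ne 0$ in $H^3(SO(3)/D_2;\Z_2)$, hence $(s+t)^2 \ne 0$ in $H^2$. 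Combined with $\alpha^4 \ne 0$, this yields $\lambda^6 = \alpha^4 (s+t)^2 \ne 0$. The main technical obstacle is rigorously confirming the spectral sequence degeneration and ruling out ``twisted'' contributions from the $D_2$-quotient; this may require either explicit tracking of all higher differentials or a direct intersection-theoretic verification that the codim-$6$ locus $\{y \in Y : p_i \in \Phi_7(y) \text{ for } i=1,\dots,6\}$ for generic six points is a nontrivial 1-cycle in $H_1(Y;\Z_2)$.
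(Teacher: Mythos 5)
Your overall strategy coincides with the paper's: reduce to $\lambda^6\ne 0$ via Remark \ref{equiv}, use the same three generating loops (the translation loop through the horizontal planes and the two rotation loops joining $\id$ to $g_1$ and to $g_2$ over the fixed parameter $[1:0:0:0:0]$), and exploit the $\RP^4$-bundle structure over $B=SO(3)/D_2=S^3/Q_8$. However, two of your steps have genuine gaps.

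First, your claim that the restriction of $\Phi_7$ to the $SO(3)/D_2$ factor is a $3$-sweepout, hence $(s+t)^3\ne 0$ in $H^3(B;\Z_2)$, is false: in the Tomoda--Zvengrowski presentation of $H^*(S^3/Q_8;\Z_2)$ (Proposition \ref{cohoring}) \emph{every} degree-one class cubes to zero (e.g.\ $\alpha_1^3=\alpha_1(\alpha_2+\alpha_2')=\alpha_3+\alpha_3=0$, and $(\alpha_1+\alpha_1')^3=(\alpha_1+\alpha_1')\alpha_2'=\alpha_3+\alpha_3=0$); this vanishing is precisely Lemma \ref{intersect}(1) of the paper, and it shows the restricted family cannot be a $3$-sweepout. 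The fact you actually need, $(s+t)^2\ne 0$, is true (here $(\alpha_1+\alpha_1')^2=\alpha_2'\ne 0$), but it must be read off from the ring of $S^3/Q_8$ rather than from your sweepout count.

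Second, and more seriously, the assertion $\alpha^5=0$ is not automatic and is the technical heart of the proof. The spectral sequence does degenerate --- Leray--Hirsch applies because $\lambda_1$ restricts to the generator of $H^1(\RP^4;\Z_2)$ on each fiber --- but this only gives $H^*(X;\Z_2)$ as a free $H^*(B;\Z_2)$-module on $1,\alpha,\dots,\alpha^4$, not the tensor-product \emph{ring}. In general $\alpha^5=\sum_{i\ge1}p^*(w_i)\,\alpha^{5-i}$ for characteristic classes $w_i\in H^i(B;\Z_2)$ of the bundle, so $\alpha^6$ could a priori contain a term $p^*(c)\,\alpha^4$ with $c\in H^2(B;\Z_2)$ that cancels $\alpha^4(s+t)^2$. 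The paper closes exactly this gap geometrically: it produces five subbundles $A_0,\dots,A_4$ (cut out by $a_0=0$, $a_1=\pm a_2$, $a_3=0$, $a_4=0$) whose homology classes equal $[A_0]$ plus explicit base corrections (Lemma \ref{a0a1}), deduces $\lambda_1^5=0$ from $A_0\cap\cdots\cap A_4=\emptyset$, and gets $\lambda_1^4(\lambda_2+\lambda_3)^2=1$ by intersecting the section $A_0\cap A_1\cap A_2\cap A_4$ with two transverse copies of $X|_b$. Your proposed fallback --- a direct intersection-theoretic verification --- is essentially what the paper does, but to complete your argument you would need to supply these representatives and the relations among them.
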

The proof of Proposition \ref{6sweepout} is a lengthy calculation of algebraic topology. We postpone it to \S \ref{6sweepoutproof}. Hence, $\Phi_7$ satisfies condition \ref{cond5sweep}.

\medskip{\noindent \bf Step 2.} However, $\Phi_7$ is  not a  smooth family, as it contains intersecting disks of the form $\{(x-x_0)^2-(y-y_0)^2=0\}\cap\mathbb B^3$. We are going to desingularize them.

For each fixed number $a_5\geq 0$, let us define a collection $\Phi^{a_5}_4: \RP^4\to \Zc$ by assigning $[a_0:a_1:a_2:a_3:a_4]$ to the zero set of the polynomial 
\begin{equation}\label{poly}
    p_{a_0,a_1,a_2,a_3,a_4,a_5}(x,y,z):= a_0(x^2-y^2+{a_5} z^3)+a_1x+a_2y+a_3z+a_4
\end{equation}
in $\mathbb B^3$. So for small $a_5>0$, $\Phi^{a_5}_4$ is slight modification of $\Phi_4$. In fact, as we will see, {\it the effect of the $z^3$ term is three-fold:  Desingularizing the intersecting disks, creating genus $1$ surfaces (Figure \ref{fig:phi1}), and lowering the area to strictly  below $2\pi$}. 

\begin{figure}
    \centering
    \makebox[\textwidth][c]{\includegraphics[width=6in]{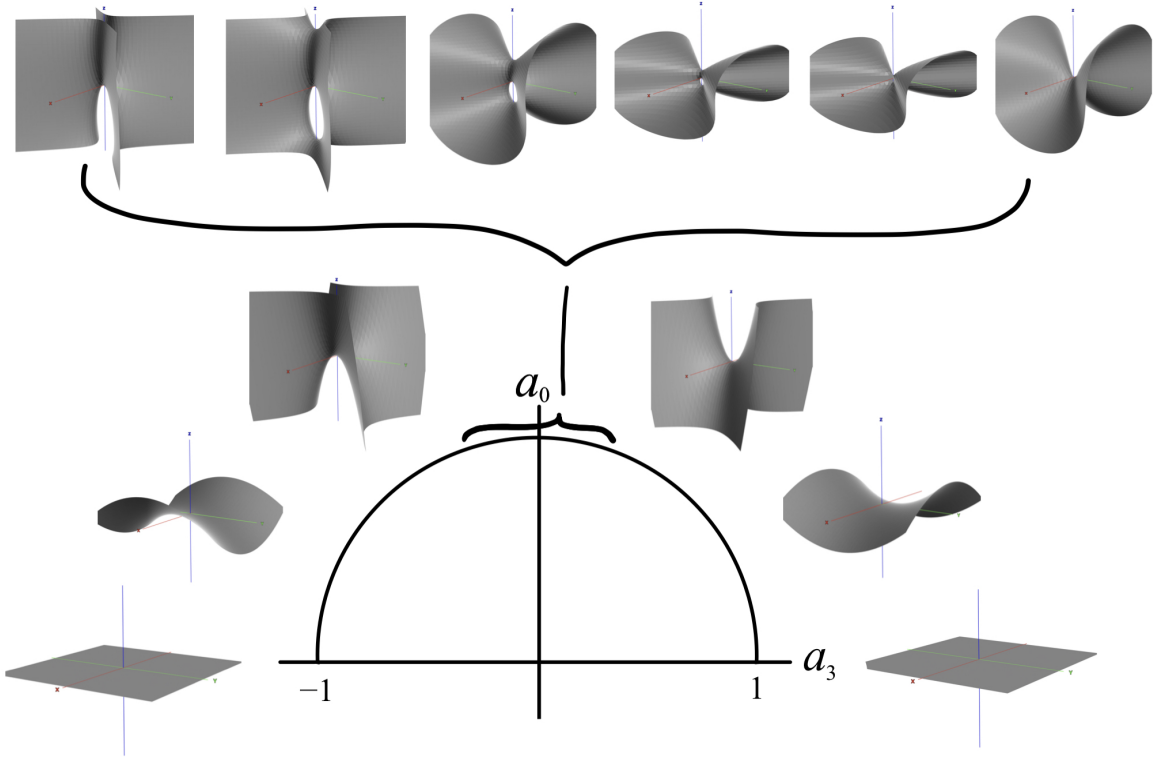}}
    \caption{The surface $\{a_0(x^2-y^2+a_5z^3)+a_3z=0\}$ for various $a_0$ and $a_3$, with $a_5>0$ small and fixed.}
    \label{fig:phi1}
\end{figure}

\begin{rmk}\label{beh}
Let us geometrically describe the family $\Phi_4^{a_5}$. We will focus on the cubic surfaces, so without loss of generality we put $a_0=1$. Since $a_1$ and $a_2$ just contribute to translation, let us assume they are both 0. Now, fix some $(a_3,a_4,a_5)\in \R^2\x(0,\infty)$, and let $s>0$ varies. We claim that as $s$ increases from 0, the surfaces
\begin{equation}\label{a5s}
    \Phi_4^{a_5 s}([1:0:0:a_3s:a_4s])=
\{x^2-y^2+s(a_3z+a_4+a_5z^3)=0\}\cap \mathbb B^3
\end{equation} {\it desingularize the intersecting disks $\{x^2-y^2=0\}\cap \mathbb B^3$ along the singular line.}
Indeed, we consider the three cases: $a_3z+a_4+a_5z^3$ having 1, 2, or 3 roots; let $z_i$'s be the roots. Then in each case, as $s$ increases from 0, the surfaces $\Phi_4^{a_5 s}([1:0:0:a_3s:a_4s])$ stay fixed on the coordinate planes $\{z=z_i\}$, and ``open up" smoothly above, below, and in between. This is because by (\ref{a5s}), at each fixed height $z\ne z_i$, the cross-section of the surface is a hyperbola, which  dilates as $s$ increases and has a distance of $\sqrt{s|a_3z+a_4+a_5z^3|}$ between the two branches. In Table \ref{table::openup}, we show some examples. 

Moreover, we can study for which $(a_3,a_4,a_5)$ the surface $\Phi_4^{a_5}([1:0:0:a_3:a_4])$ has singularities, and where they are: By solving
$$
\begin{cases}
  p_{1,0,0,a_3,a_4,a_5}=x^2-y^2+a_3z+a_4+a_5z^3=0 \text{  } \\
  \nabla p_{1,0,0,a_3,a_4,a_5}=(2x,-2y,a_3+3a_5z^2)=(0,0,0) \text{  }
\end{cases}
$$
we know when $a_3z+a_4+a_5z^3$ has some double or triple root $z_1$, the surface has a singularity at $(0,0,z_1)$.

\begin{table}
\begin{tabular}{ |c|c|c|c| } 
  \hline
  \backslashbox{$s$}{$b_3$} & 0.1 & $-3\sqrt[3]{\frac 1{400}}\approx 0.407$ &0.6 \\ 
  \hline
  0.05 & 
\includegraphics[width=1.5in]{11.pdf}
& 
\includegraphics[width=1.5in]{12.pdf}& 
\includegraphics[width=1.5in]{13.pdf}\\ 
  \hline
  0.3 & \includegraphics[width=1.5in]{21.pdf} & \includegraphics[width=1.5in]{22.pdf} & \includegraphics[width=1.5in]{23.pdf} \\ 
  \hline
  \end{tabular}
  \caption{The surface $\{x^2-y^2+s(b_3z+0.1+z^3)=0\}$ for various $b_3$ and $s$ is shown above. They illustrate the three cases where the polynomial $a_3z+a_4+a_5z^3$ has 1, 2, and 3 roots respectively.}
  \label{table::openup}
\end{table}

\end{rmk}

But our goal is to modify $\Phi_7$, not just $\Phi_4$. To do that, first notice by a straightforward calculation that $\Phi^{a_5}_4$ satisfies a property similar to Proposition \ref{phiprod}, namely $g^{-1}(\Phi^{a_5}_4(g(a)))=\Phi^{a_5}_4(a)$. The key idea behind is that {\it the polynomial   $x^2-y^2+a_5z^3$ is invariant under the $D_2$-action on $(x,y,z)$.} As a result, we can construct a map $\Phi^{a_5}_7:\frac{ \mathbb{RP}^4\x SO(3)}{D_2}\to\Zc$ by rotating all elements in $\Phi^{a_5}_4$, just like how we constructed $\Phi_7$ from $\Phi_4$ in step 1. Hence we have obtained a modification $\Phi^{a_5}_7$ of $\Phi_7$.

Now, from Remark \ref{beh}, it follows easily that $\Phi^{a_5}_7$ is $\Fb$-continuous and is homotopic in the $\Fc$-topology to $\Phi_7$, so that it is a 6-sweepout (condition \ref{cond5sweep}). Moreover, we show that condition \ref{condgenusbdd} can be satisfied:
\begin{prop}\label{genus01} For almost every  $a_5\in [0,1]$, 
$\Phi^{a_5}_7$ is a genus$\;\leq 1$ smooth family.
\end{prop}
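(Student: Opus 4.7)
The goal is to verify the four conditions of Definition~\ref{smoothfamily} for $\Phi_7^{a_5}$ on $X=(\RP^4\times SO(3))/D_2$ with genus bound $g=1$. Since rotation by $Q^{-1}$ is a diffeomorphism of $\mathbb{B}^3$, each of smoothness, finiteness of the singular set, transversality to $\partial\mathbb{B}^3$, and genus is $SO(3)$-invariant. So the whole analysis reduces to that of $\Phi_4^{a_5}$, where the surface is the zero set in $\mathbb{B}^3$ of
\[
p(x,y,z) \;=\; a_0(x^2-y^2+a_5 z^3)+a_1 x+a_2 y+a_3 z+a_4
\]
as $[a_0:a_1:a_2:a_3:a_4]$ ranges over $\RP^4$. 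The degenerate case $a_0=0$ gives a plane or empty surface, which is harmless.

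Conditions (2) and (3) are easy: away from simultaneous zeros of $p$ and $\nabla p$, the implicit function theorem gives a local $C^\infty$ graph depending smoothly on the coefficients, yielding local $C^\infty$-convergence on any compact set avoiding $P(x_0)$; varifold continuity then follows from coefficient-wise continuity together with the uniform upper area bound on $\{p=0\}\cap\mathbb{B}^3$ provided by a Bezout-type estimate for polynomials of degree $\leq 3$.

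For condition (1), I would bound $|P(x)|$ and identify a dense open $Y\subset X$ of smooth parameters. Normalizing $a_0=1$, a singularity of $\{p=0\}$ satisfies $2x+a_1=0$, $-2y+a_2=0$, and $3a_5 z^2+a_3=0$, so $(x,y)$ is uniquely determined and $z$ takes at most two real values. Hence $|P(x)|\leq 2$. The locus $\Sigma^{\mathrm{int}}_{a_5}\subset\RP^4$ where such a candidate singular point actually satisfies $p=0$ and lies in $\mathbb{B}^3$, and the locus $\Sigma^{\partial}_{a_5}$ where $\{p=0\}$ fails to meet $\partial\mathbb{B}^3$ transversely, are both semi-algebraic, cut out by explicit polynomial (discriminant and resultant) equations in $(a,a_5)$. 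Projecting the universal bad locus in $\RP^4\times[0,1]$ to the $a_5$-axis and applying Sard's theorem, we conclude that for a.e.\ $a_5\in[0,1]$, $\Sigma_{a_5}:=\Sigma^{\mathrm{int}}_{a_5}\cup\Sigma^{\partial}_{a_5}$ has positive codimension in $\RP^4$, so $Y_{a_5}:=\RP^4\setminus\Sigma_{a_5}$ is open and dense.

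The main obstacle is condition (4), the genus bound. For $a\in Y_{a_5}$, $S_a:=\{p=0\}\cap\mathbb{B}^3$ is a smooth compact surface with boundary transverse to $\partial\mathbb{B}^3$, and I must prove $\genus(S_a)\leq 1$. I plan to use the slice-by-slice picture of Remark~\ref{beh} and Table~\ref{table::openup}: the horizontal slice $\{p=0\}\cap\{z=z_0\}$ is a conic in $(x,y)$, always a smooth hyperbola except at the (at most three) heights where $a_5 z_0^3+a_3 z_0+a_4-(a_1^2-a_2^2)/(4a_0)=0$ and it degenerates to two crossing lines. Between consecutive critical heights the slice topology is constant, so $\{p=0\}\subset\R^3$ is assembled by gluing at most four hyperboloidal pieces across these transitions, producing a surface of genus at most $1$. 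Intersecting with the convex ball $\mathbb{B}^3$ can only split components and cap ends, hence cannot raise the genus. A Morse-theoretic cross-check using the height function $z$ restricted to $S_a$ (whose interior critical points coincide with the candidate singular points above, so are at most two) confirms $\chi(S_a)\geq -2$, and the extremal cases visible in Table~\ref{table::openup} exhibit both the genus-$0$ and genus-$1$ configurations, completing the verification.
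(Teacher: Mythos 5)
Your proposal follows essentially the same route as the paper: reduce by $SO(3)$-invariance to $\Phi_4^{a_5}$, locate the (boundedly many) interior singularities of the cubic by solving $p=\nabla p=0$, invoke transversality/Sard in $(a,a_5)$ to get a dense smooth locus for a.e.\ $a_5$, and obtain the genus bound from the horizontal-slice picture of Remark~\ref{beh} (opening up across at most three critical heights), with subsurface monotonicity of genus under intersection with $\mathbb B^3$. Two small corrections: your Morse ``cross-check'' conflates the critical points of $z|_{S_a}$ (which need only $\partial_x p=\partial_y p=0$, hence up to three solutions from the cubic in $z$) with the singular points (which additionally need $\partial_z p=0$, and in fact there is at most one, since $z$ must be a multiple root of a cubic); and for parameters where transversality to $\partial\mathbb B^3$ fails you still need to check, as the paper does, that the tangency set is finite, since Definition~\ref{smoothfamily}(1) requires a finite $P(x)$ for \emph{every} $x\in X\setminus Y$, not just that $Y$ is dense. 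Neither point affects the validity of your main argument.
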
 
\begin{proof}
By Remark \ref{beh}, the subset of parameters $a\in \frac{\RP^4\x SO(3)}{D_2}$ such that $\Phi^{a_5}_7(a)$ has singularities is 1-codimensional, and all such surfaces have at most one singularity. 
Moreover, it is straightforward to check that for all $a$,  $\Phi^{a_5}_7(a)$ and $\partial \mathbb B^3$ touch (i.e. have coinciding tangent planes) at finitely many points. Also, by the  transversality theorem and Remark \ref{beh}, for a.e. $a_5\in[0,1]$,  the algebraic surfaces in $\mathbb R^3$ that define $\Phi^{a_5}_7(a)$ (i.e. the rotations of the zero set of (\ref{poly}) in $\mathbb R^3$) intersect $\partial \mathbb B^3$ transversely for a.e. $a\in \frac{\RP^4\x SO(3)}{D_2}$. So for such $a_5$,  $\Phi^{a_5}_7$ satisfies Definition \ref{smoothfamily} (1).

Note that each smooth surface of $\Phi^{a_5}_4$ has genus 0 or 1 because they are obtained from opening up the intersecting disks $\{x^2-y^2=0\}\cap\mathbb B^3$ above, below, and in between {\it at most three horizontal planes}, by Remark \ref{beh}. So each smooth surface of $\Phi^{a_5}_7$ has genus 0 or 1. Now, using  Remark \ref{beh}, one can show that  Definition \ref{smoothfamily} (2) and (3) are also satisfied by $\Phi^{a_5}_7$. So $\Phi^{a_5}_7$ is a genus $\leq 1$ smooth family for a.e. $a_5$. \end{proof}

Next, we claim that for small $a_5>0$, $\Phi^{a_5}_7$ also satisfies condition \ref{condareabdd}, i.e. 
 $\area\circ\Phi^{a_5}_7<2\pi$. Indeed, it suffices to show that $\area\circ\Phi^{a_5}_4<2\pi$ for small $a_5>0$, which follows straightforwardly from the following two propositions: 
\begin{prop}\label{globalmax} The area function $\area\circ\Phi_4:\RP^4\to\R$ attains a strict global maximum at $[1:0:0:0:0]$.
\end{prop}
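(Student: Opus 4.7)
My plan is to stratify $\RP^4$ by the linear-algebraic type of the defining polynomial and estimate the area on each stratum. On the hyperplane $\{a_0 = 0\}$ the surface $\Phi_4(a)$ is a flat disk, so $\area(\Phi_4(a)) \leq \pi < 2\pi$ and the maximum cannot occur there. For $a_0 \neq 0$ I normalize $a_0 = 1$ and complete the square in $x$ and $y$ to write
\begin{equation*}
    p_a(x,y,z) = (x + a_1/2)^2 - (y - a_2/2)^2 + a_3 z + c, \qquad c := a_4 - \tfrac{a_1^2}{4} + \tfrac{a_2^2}{4},
\end{equation*}
and then split according to $a_3 = 0$ versus $a_3 \neq 0$.

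In the degenerate case $a_3 = 0,\, c = 0$ the surface is a union of two planes whose distances from the origin are $|a_1 + a_2|/(2\sqrt 2)$ and $|a_1 - a_2|/(2\sqrt 2)$; a direct computation yields $\area(\Phi_4(a)) = 2\pi - \pi(a_1^2 + a_2^2)/4$, with equality iff $a_1 = a_2 = 0$, which together with $c = 0$ forces $a_4 = 0$ and hence $a = [1:0:0:0:0]$. In the remaining case $a_3 = 0,\, c \neq 0$ the surface is a hyperbolic cylinder, which after an explicit parametrization (hyperbolic angle along the cross-section plus $z$) has area strictly less than $2\pi$, essentially because the cylinder ``misses'' a neighborhood of the asymptotic axis that contributes a full disk's worth of area in the doubled-plane case.

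The main step is $a_3 \neq 0$, where the surface is the hyperbolic paraboloid $z = f(x,y) = -(x^2 - y^2 + a_1 x + a_2 y + a_4)/a_3$ over a region $\Omega \subset \{x^2 + y^2 \leq 1\}$, with
\begin{equation*}
    \area(\Phi_4(a)) = \int_\Omega \sqrt{1 + \tfrac{(2x + a_1)^2 + (2y - a_2)^2}{a_3^2}}\,dx\,dy.
\end{equation*}
My strategy is (i) to reduce to the centered case $a_1 = a_2 = a_4 = 0$ by showing, via a first-variation argument, that for each fixed $a_3$ the area is maximized there, and then (ii) for the one-parameter family $\{x^2 - y^2 + a_3 z = 0\} \cap \mathbb B^3$, pass to polar coordinates in the $xy$-plane to reduce the area to a one-variable integral in $\theta$ that can be bounded by $2\pi$ with strict inequality for $a_3 \neq 0$ by elementary calculus; the boundary behavior as $a_3 \to 0$ recovers the doubled-plane configuration already handled. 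The main obstacle is step (i): the reduction is delicate because varying $(a_1, a_2, a_4)$ simultaneously deforms the domain $\Omega$ and the metric factor, so that a naive gradient computation may not suffice. I would expect to need either a convexity/rearrangement argument on the area integrand in the translation parameters, or else a Cauchy--Crofton-type bound sharpened to use the indefinite signature $(+,-,0)$ of the quadratic part --- exploiting the fact that saddle-type quadrics are ruled by lines in the asymptotic directions --- in order to halve the naive Crofton bound of $4\pi$ to $2\pi$ and identify the equality case.
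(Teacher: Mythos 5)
Your treatment of the degenerate strata is fine (the explicit formula $2\pi-\pi(a_1^2+a_2^2)/4$ for the doubled-plane case is correct and pins down the equality case cleanly), but the heart of the proposition is the stratum $a_3\neq 0$, and there your argument has a genuine gap that you yourself flag: step (i), the reduction to the centered case $a_1=a_2=a_4=0$, is never established, and it is not clear that a first-variation or rearrangement argument can establish it, precisely because translating the saddle simultaneously moves the conformal factor $\sqrt{1+((2x+a_1)^2+(2y-a_2)^2)/a_3^2}$ and deforms the non-convex, $\theta$-dependent domain $\Omega$. Symmetry makes the centered position a critical point, but you need a global maximum over all of $\R^3$ of translations, and nothing in the proposal supplies it. Step (ii) is also only asserted: in polar coordinates the domain is $\{r^2+r^4\cos^2(2\theta)/a_3^2\le 1\}$, so the ``one-variable integral in $\theta$'' is less elementary than claimed. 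The hyperbolic-cylinder case $a_3=0$, $c\neq 0$ is likewise only heuristic as written.

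The idea you relegate to a last-resort aside --- exploiting that the saddle is doubly ruled --- is in fact the argument that works, and it is the one the paper uses (due to fedja); it bypasses the reduction to the centered case entirely. Parametrize $M=\{x^2-y^2+z=0\}$ by ${\bf x}(s,t)=(s+t,\,s-t,\,-4st)$, whose coordinate curves are the two families of ruling lines; the Jacobian is $2\sqrt{1+8s^2+8t^2}\le 2\sqrt{1+8s^2}+2\sqrt{1+8t^2}$ (strictly). For the first summand, integrate in $t$ first: for fixed $s$ the curve $t\mapsto{\bf x}(s,t)$ is a straight line whose distance $d$ from the center of a ball $B=B_R(x_0,y_0,z_0)$ satisfies $d^2\ge 2\left(s-\tfrac{x_0+y_0}{2}\right)^2$ (project to the $xy$-plane), so its chord in $B$ has length at most $2\sqrt{(R^2-d^2)^+}$ and is traversed at constant speed $\sqrt{2(1+8s^2)}$; the factor $\sqrt{1+8s^2}$ cancels and the remaining $s$-integral evaluates to exactly $\pi R^2$. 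The second summand is symmetric, giving $\area(M\cap B)<2\pi R^2$ for every ball; since every $\Phi_4(a)$ with $a_0\neq 0$, $a_3\neq 0$ is a similarity image of $M$, this yields $\area(\Phi_4(a))<2\pi$ uniformly over all translations and scalings at once. I would encourage you to develop your Crofton-type remark along these lines rather than pursue step (i).
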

Note that $\Phi_4([1:0:0:0:0])$ is $\{x^2-y^2=0\}\cap\mathbb B^3$, which has area $2\pi$.
\begin{prop}\label{localmax}
Define $\Phi_5:\RP^4\x[0,1]\to\Zc$ by $\Phi_5(a,a_5)=\Phi^{a_5}_4(a)$. Then the area function $\area\circ\Phi_5$ attains a strict local maximum at $([1:0:0:0:0],0)$.
\end{prop}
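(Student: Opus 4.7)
Let $A(a, a_5) := \area(\Phi_5(a, a_5))$ and $a^* := [1:0:0:0:0]$, so $A(a^*,0) = 2\pi$. The plan is to pair the strict global maximum of $A(\cdot,0)$ from Proposition \ref{globalmax} with a direct computation of $A(a^*, a_5)$ showing a strict decrease as $a_5$ grows from $0$, and then fit these two facts together into a joint strict local maximum on $\RP^4 \times [0,1]$.

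The analytic core is the estimate $A(a^*, a_5) \leq 2\pi - c\sqrt{a_5}$ for an explicit $c>0$ and all small $a_5 > 0$. The surface $\Phi_5(a^*,a_5) = \{y^2 - x^2 = a_5 z^3\} \cap \mathbb B^3$ is invariant under the isometries $(x,y,z)\mapsto(\pm x, \pm y, z)$ and $(x,y,z)\mapsto(y,x,-z)$, so its total area equals four times the area of the octant piece on $\{y>0,\, z>0\}$, which is the graph $y = \sqrt{x^2 + a_5 z^3}$ over $\{(x,z) : z>0,\ 2x^2 + z^2 + a_5 z^3 \leq 1\}$, with area element
\[
\sqrt{\,2 - \frac{a_5 z^3\bigl(1 - \tfrac{9}{4}a_5 z\bigr)}{x^2 + a_5 z^3}\,}\, dx\, dz.
\]
Applying the elementary bound $\sqrt{2-t}\leq\sqrt{2}-t/(2\sqrt{2})$, evaluating the $x$-integral via $\int_{-x_{\max}}^{x_{\max}}(x^2+a_5 z^3)^{-1}\,dx = 2(a_5 z^3)^{-1/2}\arctan(x_{\max}/\sqrt{a_5 z^3})$ (which is at least $\pi/\sqrt{a_5 z^3}$ once $a_5$ is small enough), and then integrating the remaining $\sqrt{a_5}\,z^{3/2}$ against $dz$ over $[0,1]$ produces the claimed $\sqrt{a_5}$-rate deficit. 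In particular, $A(a^*, a_5) < 2\pi$ strictly for $a_5 \in (0, a_5^0]$.

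To finish, I would assemble the pieces on $\RP^4 \times [0,1]$ as follows. For each fixed small $\rho > 0$, Proposition \ref{globalmax} together with the compactness of $\RP^4$ gives $A(\cdot, 0) \leq 2\pi - \delta(\rho)$ on $\{d(a,a^*) \geq \rho\}$; since the polynomials $p_{a,a_5}$ depend smoothly on all parameters and their zero sets are transverse to $\partial\mathbb B^3$ off a finite set uniformly on this compact set, varifold continuity yields $A(a, a_5) < 2\pi$ on $\{d(a, a^*) \geq \rho\} \times [0, \eta(\rho)]$ for some $\eta(\rho) > 0$. In the complementary cylinder $\{d(a, a^*) < \rho\} \times (0, \eta]$, I would parametrize $a = a^* + \varepsilon$ in an affine chart (so the defining polynomial is $x^2 - y^2 + a_5 z^3 + \varepsilon_1 x + \varepsilon_2 y + \varepsilon_3 z + \varepsilon_4$) and estimate
\[
A(a, a_5) \leq A(a^*, a_5) + |A(a,a_5) - A(a^*, a_5)| \leq 2\pi - c\sqrt{a_5} + C\|\varepsilon\|^{\alpha},
\]
where the perturbation bound has a fractional exponent $\alpha\in(0,1]$ dictated by the isolated singularity of $\Phi_5(a^*, a_5)$ at the origin. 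The principal technical obstacle is fixing the correct exponent $\alpha$ and closing this bound in the intermediate regime $\|\varepsilon\|^{\alpha} \sim \sqrt{a_5}$: this likely requires extracting from the proof of Proposition \ref{globalmax} in \S \ref{lemmaproof} a quantitative rate showing that $A(a, 0)$ decays strictly as $a$ leaves $a^*$ in every direction, with rate depending on whether the perturbation preserves (e.g.\ along $\varepsilon_1 = \pm\varepsilon_2$, where the surface remains a pair of planes and decays quadratically) or resolves the singularity of $\Phi_5(a^*,0)$ (as along the $\varepsilon_4$-axis, where the hyperbolic-cylinder family already exhibits a $\sqrt{\varepsilon_4}$-deficit).
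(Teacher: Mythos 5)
Your single-variable computation along the pure $a_5$-direction is plausible (the $\sqrt{a_5}$-rate deficit is correct, and is consistent with the $\int_0^t s^{-1/2}\,ds \sim \sqrt t$ decay the paper obtains), and the far-field step via Proposition \ref{globalmax} plus continuity of area on a compact set is fine. But the argument in the cylinder $\{d(a,a^*)<\rho\}\times(0,\eta]$ does not close, and this is not a removable technicality — it is the entire content of the proposition. The inequality $A(a,a_5)\leq 2\pi - c\sqrt{a_5}+C\|\varepsilon\|^{\alpha}$ only yields $A<2\pi$ when $\|\varepsilon\|^{\alpha}\ll\sqrt{a_5}$, which excludes an open set of parameters accumulating at $(a^*,0)$ (e.g.\ $a_5\to 0$ with $\varepsilon_4\sim a_5^{1/2}$, or any path with $\|\varepsilon\|\gg a_5^{1/(2\alpha)}$). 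Your proposed fix — a quantitative decay rate for $A(\cdot,0)$ away from $a^*$ — would still not suffice, because $A(a,a_5)$ is not controlled by combining the two one-parameter deficits: the linear perturbations $\varepsilon_3,\varepsilon_4$ and the cubic term $a_5z^3$ interact (the roots of $b_3z+b_4+b_5z^3$ determine where the singular line reopens or re-pinches, cf.\ Remark \ref{beh}), so the problem is irreducibly two-parameter in the regime you flag as the obstacle.

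The paper avoids this by never separating the perturbation into ``desingularizing'' and ``other'' directions. It writes the nearby parameters as $(b_1,b_2,\,tb_3,\,tb_4,\,tb_5)$ with $(b_3,b_4,b_5)$ on the unit (hemi)sphere, and proves (Proposition \ref{neg0}) that the area is $<2\pi$ for every such ray, uniformly in the direction and in the small translation $(b_1,b_2)$. The mechanism is a first-variation estimate (Lemma \ref{firstvar}): the interior term $I_1$ satisfies $I_1<-1/(C\sqrt s)$ uniformly over all directions — the uniformity coming from Lemma \ref{cubicfun}, which guarantees the cubic $b_3z+b_4+b_5z^3$ is bounded away from zero on a fixed-length subinterval regardless of $(b_3,b_4,b_5)$ — while the boundary terms are tamed by enlarging $\mathbb B^3$ to a $t$-dependent domain $\Omega$ with horizontal boundary near the poles, at an area cost of $O(t)$ that is beaten by the $\sqrt t$ gain. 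If you want to salvage your approach, you would have to replace the triangle inequality by a monotonicity statement along rays through $(a^*,0)$ in \emph{all} five perturbation directions simultaneously, which is essentially the paper's Proposition \ref{neg0}.
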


The proof of Proposition \ref{globalmax} is due to the MathOverflow user fedja \cite{fed}. We include it in Appendix \ref{globalmaxproof}. The proof of Proposition \ref{localmax} is postponed to \S \ref{localmaxproof}: It uses mainly calculus but is quite technical. However, intuitively Proposition \ref{localmax} makes sense because Remark \ref{beh} tells us that $\Phi_5$ gives  {\it a desingularization of the intersecting equatorial disks, and desingularization should lower the area as the sharp bend along the singular line is smoothed.}

Thus, for a.e. sufficiently small $a_5>0$, by Proposition \ref{globalmax} and \ref{localmax},  $\Phi^{a_5}_7$  satisfies condition \ref{condareabdd} also. Defining $\Psi$ as one such $\Phi^{a_5}_7$, we  finish the proof of Theorem \ref{6sweep}.

\subsection{Proof of Theorem \ref{type}} \label{typeproof} We will first do the case of $\overline{\mathcal E_0\cup \mathcal E_1}\cap \mathcal E^{2\pi}$. By  Theorem \ref{6sweep} \ref{condgenusbdd}  and \ref{condareabdd}, we can view the family $\Psi$ in Theorem \ref{6sweep} as the composition of a map $\Psi':\frac{ \mathbb{RP}^4\x SO(3)}{D_2}\to \overline{\mathcal E_0\cup \mathcal E_1}\cap \mathcal E^{2\pi}$ and the  natural map $i$ from   $\overline{\mathcal E_0\cup \mathcal E_1}\cap \mathcal E^{2\pi}$ to the space of 2-cycles $\Zc_{2}(\mathbb B^3,\partial \mathbb B^3;\Z_2)$ equipped  with the flat topology (see \S \ref{apminmax}). Note that $\Psi'$ is continuous (under the topology of $\mathcal E$ defined in \S \ref{intro}) since $\Psi$ is a smooth family by Theorem \ref{6sweep} \ref{condgenusbdd}, and $i$ is continuous by the fact that the smooth convergence of surfaces is stronger than the flat convergence.

By Almgren isomorphism theorem (see \S \ref{apminmax}), we  denote the cohomology ring of $\Zc_{2}(\mathbb B^3,\partial \mathbb B^3;\Z_2)$ in $\Z_2$-coefficients as $\Z_2[\bar\lambda]$. To prove Theorem \ref{type} for the space $\overline{\mathcal E_0\cup \mathcal E_1}\cap \mathcal E^{2\pi}$, it suffices to show that $(i^*\bar\lambda)^6\ne 0$. Thus, it suffices to show $(i\circ \Psi')^*(\bar\lambda^6)\ne 0$, i.e. $\Psi$ is a 6-sweepout, which is true by Theorem \ref{6sweep}.

Reusing the argument above with $\overline{\mathcal E_0\cup \mathcal E_1}\cap\mathcal E^{2\pi}$ replaced by any subspace of $\mathcal E^{\infty}$ that contains $\overline{\mathcal E_0\cup \mathcal E_1}\cap\mathcal E^{2\pi}$,  we finish the proof of  Theorem \ref{type}. (Note that elements in $\mathcal E^{\infty}$ have finite area and thus belong to $\Zc_{2}(\mathbb B^3,\partial \mathbb B^3;\Z_2)$.)

\subsection{Proof of Corollary \ref{6width}} Let $\Psi$ be the smooth family in Theorem \ref{6sweep}. Since $\area\circ\Psi<2\pi$ and $\Psi$ is a 6-sweepout by Theorem \ref{6sweep},   $\omega_6(\mathbb B^3)<2\pi.$

\subsection{Proof of Theorem \ref{min1}}
Let $\Psi$ be the family satisfying condition \ref{cond5sweep}, \ref{condgenusbdd}, and \ref{condareabdd} of Theorem \ref{6sweep}, and $\Psi^{(5)}$ be the subfamily of $\Psi$ parametrized by a 5-skeleton of the parameter space of $\Psi$. Without loss of generality, we can assume that  $\Psi^{(5)}$ is also a smooth family. Now, since $\Psi$ is a 5-sweepout by \ref{cond5sweep}, so is $\Psi^{(5)}$ (see  the proof of \cite[Proposition 7.1]{MN21}). It follows that the width   $L:=\LSS(\Lambda(\Psi^{(5)}))$ is positive.

Now, we  apply the Simon-Smith min-max theory to $\Psi^{(5)}$. Let $\{\Phi_i\}$ be a pulled-tight minimizing sequence of $\Lambda(\Psi^{(5)})$. Denote by $\mathcal W$ the set of all stationary integral varifolds in $\mathbb B^3$ whose  support is a smooth  embedded free boundary minimal hypersurface, and by   $\mathbf C(\{\Phi_i\})$ the set of subsequential varifold limits  of min-max sequences of $\{\Phi_i\}$. 
Then by Theorem \ref{ssminmaxthm}, $\mathbf C(\{\Phi_i\})\cap\mathcal W$ is non-empty. 
Now, there are three cases: $\mathbf C(\{\Phi_i\})\cap\mathcal W$ contains (1) some element $\Gamma$ that is not the equatorial disk or the critical catenoid; (2) only critical catenoids; or (3) only equatorial disks (note that critical catenoids and equatorial disks cannot appear together in $\mathbf C(\{\Phi_i\})\cap\mathcal W$ as they have different area). We will consider each case individually in the following.

\medskip{\noindent \bf Case (1).} We will show that $\Gamma$ has the desired property stated in Theorem \ref{min1}:
\begin{prop}\label{desired}
$\Gamma$ has multiplicity 1, genus $0 $ or $1$, Morse index $4$ or $5$, and area in the range $(\pi,2\pi)$.
\end{prop}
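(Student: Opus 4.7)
The plan is to read off each claimed property of $\Gamma$ from Theorem \ref{ssminmaxthm} applied to $\Psi^{(5)}$, together with a few standard facts about free boundary minimal surfaces in $\mathbb{B}^3$. Writing $V=\sum_{i=1}^N n_i\Gamma^i$, one has $L=\|V\|<2\pi$ by condition \ref{condareabdd}. Combined with the classical lower bound $\area(\Gamma^i)\geq \pi$ for any non-empty embedded free boundary minimal surface in $\mathbb{B}^3$, with equality only for the equatorial disk, this forces $N=1$ and $n_1=1$, so $V=\Gamma$ with multiplicity one, $\area(\Gamma)=L<2\pi$, and (since $\Gamma$ is not the equatorial disk in Case~(1)) $\area(\Gamma)>\pi$. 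Because $\mathbb{B}^3$ is orientable and simply connected, every properly embedded surface in $\mathbb{B}^3$ is two-sided and hence orientable, so the non-orientable term in the genus bound of Theorem \ref{ssminmaxthm} vanishes and, using $g=1$ from condition \ref{condgenusbdd}, one obtains $\genus(\Gamma)\leq 1$. The same theorem also gives $\ind(\Gamma)\leq\dim\Psi^{(5)}=5$.

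The substantive step is the lower bound $\ind(\Gamma)\geq 4$, which I would approach by a Lusternik–Schnirelmann–Marques–Neves type deformation argument. If $\ind(\Gamma)\leq 3$, then the $3$-dimensional space of negative Jacobi eigenfunctions of $\Gamma$ produces a local area-decreasing deformation of $\Gamma$. Using a standard pull-tight/deformation lemma, this local deformation can be interpolated onto a neighborhood of the $\Gamma$-preimages in the $5$-dimensional parameter space of $\Psi^{(5)}$ and patched together with analogous deformations for all other elements of $\mathbf C(\{\Phi_i\})\cap\mathcal W$ at level $L$, each of which, by the argument of the first paragraph, is also a single smooth free boundary minimal surface of multiplicity one. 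The outcome should be a homotopic modification of $\Psi^{(5)}$ whose maximum area is strictly less than $L$, contradicting the definition of $L$ as the width.

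The main obstacle is making this index lower bound rigorous in the Simon–Smith framework. The delicate points are (a) keeping the modified family within the class of genus $\leq 1$ smooth families in the sense of Definition \ref{smoothfamily}, where isolated singular points are permitted and can move in the parameter, and (b) simultaneously handling multiple critical surfaces at level $L$ whose deformations must be coordinated. It is here that one expects the full strength of $\Psi$ being a genuine $6$-sweepout extension of $\Psi^{(5)}$ to enter: through its cohomological non-triviality, it should rule out the kind of global deformation of $\Psi^{(5)}$ that would bring the width below $L$, and thus ought to force $\ind(\Gamma)\geq 4$.
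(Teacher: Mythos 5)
Most of your argument matches the paper's: multiplicity one and the area bounds follow exactly as you say from $\LSS(\Lambda(\Psi^{(5)}))<2\pi$ together with the Fraser--Schoen lower bound $\area\geq\pi$ (with equality only for the equatorial disk), and the genus and index upper bounds are read off directly from Theorem \ref{ssminmaxthm} and the dimension of the parameter space of $\Psi^{(5)}$.

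The gap is in the index lower bound $\ind(\Gamma)\geq 4$. This is not a min-max statement at all, and the Lusternik--Schnirelmann deformation scheme you sketch would not establish it: a low index of the limiting surface is perfectly compatible with the width being attained at that surface, so there is no contradiction to derive. (Concretely, the first three Almgren--Pitts widths of $\mathbb B^3$ equal $\pi$ and are detected by the equatorial disk, which has index $1$; one cannot push a $3$-sweepout below $\pi$ despite the small index.) The splitting-of-domains argument in the paper serves a different purpose --- excluding the possibility that \emph{only} equatorial disks or critical catenoids occur in the critical set --- and does not give index lower bounds. Moreover, even the full $6$-sweepout structure of $\Psi$ does not yield such a bound in the Simon--Smith setting; the paper only \emph{conjectures} that $\ind(\Gamma)=5$. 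What the paper actually uses is a general spectral fact about free boundary minimal surfaces in $\mathbb B^3$, independent of how $\Gamma$ was produced: any embedded free boundary minimal surface other than the equatorial disk has Morse index at least $4$ (Devyver \cite[\S 5]{Dev19}, Tran \cite[\S 3.1]{Tra20}). Since in Case (1) the surface $\Gamma$ is by hypothesis not the equatorial disk, this citation closes the argument in one line. You should replace your deformation scheme by this reference (or by a proof of that spectral estimate, which uses the coordinate functions and the conformal Killing fields of $\mathbb B^3$ as test functions in the second variation).
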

\begin{proof}
First, by Theorem \ref{ssminmaxthm}, $\area(\Gamma)$ when counted with  possible multiplicities is equal to $\LSS(\Lambda(\Psi^{(5)}))$, which is less than $2\pi$ by \ref{condareabdd}. Then, since  the least possible  area of a free boundary minimal surface in $\mathbb B^3$ is $\pi$ by a result of Fraser-Schoen \cite[Theorem 5.4]{FS11},  $\Gamma$ must have multiplicity 1.
Moreover, by Theorem \ref{ssminmaxthm}, \ref{condgenusbdd} implies $\genus(\Gamma)\leq 1$, and $\ind(\Gamma)\leq 5$ since the parameter space of $\Psi^{(5)}$ is 5-dimensional.
Lastly, since $\Gamma$ is not the equatorial disk, we have $\area(\Gamma)>\pi$ again from  \cite{FS11} (and also \cite{Bre12} by Brendle), and $\ind(\Gamma)\geq 4$  from \cite[\S 5]{Dev19} by Devyver or \cite[\S 3.1]{Tra20} by Tran.
\end{proof}

Hence, case (1) is done. 

\medskip{\noindent \bf Case (2).} Now we turn to case (2). We will use a technique called {\it splitting of domains}. First, let $\mathcal C$ denote  the set of critical catenoids: Note that $\mathcal C$ is homeomorphic to $\RP^2$. Fixing a small $\epsilon>0$, and denoting by $W$ the parameter space of $\Psi^{(5)}$, we consider the set of parameters
$$\{x\in W:\Fb(\Phi_i(x),\mathcal C)\leq \epsilon\}.$$
This subset, after a slight thickening, can be assumed to be a cubical complex; we will denote it by $Z_i$, and then  $\overline{W\backslash Z_i}$ by $Y_i$. Now, as we mentioned $\Psi^{(5)}$ is a 5-sweepout, hence so is $\Phi_i$. Then using a topological argument by Lusternik-Schnirelmann \cite{LS47} (see also \cite[Claim 6.3]{MN17}), we know that either $\Phi_i|_{Z_i}$ is a 1-sweepout or $\Phi_i|_{Y_i}$ is a 4-sweepout. Now note that, if $\epsilon$ is small enough, $\Phi_i|_{Z_i}$ lies near $\mathcal C$ and thus is homotopic in the $\Fc$-topology to some $\Mb$-continuous map into $\mathcal C$ (using \cite[\S 3.3.6]{Nur16} by Nurser, together with discretization and interpolation theorems in the free boundary setting  by Li-Zhou \cite[\S 4.2]{LZ21}). But no map into $\mathcal C$ can be a 1-sweepout as $\mathcal C$ can be contracted to just $\{\emptyset\}$, by 
shrinking each critical catenoid to its axis, which has no mass. Hence, $\Phi_i|_{Z_i}$ cannot be a 1-sweepout, and so {\it each $\Phi_i|_{Y_i}$ must be a 4-sweepout.}

Now, we  claim that { \it for some $i$,  $\Phi_i|_{Y_i}$ is homotopic (in the Simon-Smith setting) to another smooth family $\tilde \Psi$ with maximal area less than $L$.} Indeed, if not, then by standard Simon-Smith min-max theory, there exists $y_i$ such that $\Phi_i|_{Y_i}(y_i)$ converges subsequentially to some smooth embedded free boundary minimal surface $V$ with mass $L$ (see \cite[\S 5]{CD03} and \cite[Appendix]{CGK18}: Their arguments apply here because even though our parameter spaces $Y_i$ depend on $i$, they can all be embedded into some  $\R^N$ with $N$ independent of $i$). Then note two facts: $V$ cannot be the critical catenoid by the definition of $Y_i$, and $V\in \mathbf C(\{\Phi_i\})\cap\mathcal W$ clearly. However, these two facts are contradictory because we are in case (2). Thus, the desired smooth family $\tilde\Psi$ exists. 

We then apply the Theorem \ref{ssminmaxthm} to $\Lambda(\tilde\Psi)$, and repeat the  argument above. Namely, letting $\{\tilde\Phi_i\}$ be a pulled-tight minimizing sequence of $\Lambda(\tilde\Psi)$, there are two cases: $\mathbf C(\{\tilde\Phi_i\})\cap\mathcal W$ either contains (2a) some element $\tilde{\Gamma}$ that is not the equatorial disk or the critical catenoid, or (2b) only equatorial disks. The critical catenoid, having area $L$, cannot appear because   $\area\circ\tilde\Psi<L$.

If it is case (2a), one can reapply the proof of Proposition \ref{desired} to $\tilde{\Gamma}$ to show that $\tilde{\Gamma}$ has the desired properties in Theorem \ref{min1} (this time  $\ind(\tilde{\Gamma})$ is actually 4), and we are done. If it is case (2b), we {\it split the domains} again to arrive at a contradiction. This time the key ideas are: There is no $3$-sweepout near the set of equatorial disks, which is merely an $\RP^2$; and there is no $1$-sweepout with maximal area less than $\pi$, which is the least possible area for a free boundary minimal surface. Therefore case (2b) is impossible. Now case (2) is also done.

\medskip{\noindent \bf Case (3).} Case (3)  is entirely analogous to case (2b). 

\medskip
So we have finished the proof of Theorem \ref{min1}.

\subsection{Explanation of Remark \ref{cfs}}\label{cfsproof}
Letting $a_5>0$ be sufficiently small, we define the  family $\Phi_1:\RP^1\to \Zc$ of $D_2$-symmetric surfaces:
\begin{align*}
    \Phi_1([a_0:a_3])&:=\Phi^{a_5}_4([a_0:0:0:a_3:0])=\{a_0(x^2-y^2+a_5z^3)+a_3z=0\}\cap \mathbb B^3
\end{align*}
(see Figure \ref{fig:phi1}). Note that $\area\circ\Phi_1<2$ as  $\area\circ\Phi^{a_5}_4<2$ by Theorem \ref{6sweep} \ref{condareabdd}. Then applying the equivariant Simon-Smith min-max theorem to $\Phi_1$, we obtain a free boundary minimal surface. To show it has the desired properties mentioned in Remark \ref{cfs}, we just proceed in a way similar to \cite[\S 4]{CFS20}. In particular, we can use the proof of \cite[Lemma 4.1]{CFS20} to show that the number of boundary component of the minimal surface obtained is one. Indeed, we first note that for each surface $\Phi_1(a)$, the complement of the three axes of rotations (the $z$-axis, $\{z=0,x=y\}$, and $\{z=0,x=-y\}$) in $\Phi_1(a)$ are topological disks. And this fact is what one need to carry out the proof of \cite[Lemma 4.1]{CFS20}.

\section{Technical Ingredients}\label{lemmaproof}
In this section, we prove Proposition \ref{6sweepout} and \ref{localmax}.

\subsection{Proof of Proposition \ref{6sweepout}}\label{6sweepoutproof}
Throughout \S \ref{6sweepoutproof}, we 
write $X:=\frac{\RP^4\x SO(3)}{D_2}$. The proof has four steps. In step 1 we compute $H_1(X;\Z_2)$, and understand which first homology classes  give 1-sweepouts under $\Phi_7$. In step 2 we find the cohomology class $\lambda\in H^1(X;\Z_2)$ that {\it detects the 1-sweepouts} (as explained Remark \ref{equiv}), and understand its Poincar\'e dual. In step 3 we
show that $\lambda^6\ne 0$. And  in step 4, we prove a technical lemma used in step 3. By Remark \ref{equiv}, 
we immediately obtain the desired claim that $\Phi_7$ is a 6-sweepout.

\medskip{\noindent \bf Step 1.}  Let us first find $\pi_1(X)$. Let $Q_8$ be the {\it quaternion group} $\{\pm1, \pm i, \pm j, \pm k\}$, contained in the group $S^3$ of unit quaternions.
\begin{lem}\label{pi1}
$\pi_1(X)=\Z_2\x Q_8.$
\end{lem}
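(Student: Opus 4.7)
My approach is to identify $\pi_1(X)$ as the deck group of the universal cover of $X$. First, I would verify that the diagonal $D_2$-action on $\RP^4 \times SO(3)$ is free: the action on the $SO(3)$-factor is by left multiplication by the non-trivial elements $g_1, g_2 \in SO(3)$, which is free. Hence $X$ is a smooth manifold and the projection $\RP^4 \times SO(3) \to X$ is a normal $4$-fold cover. Since $\pi_1(\RP^4\times SO(3)) = \Z_2\times\Z_2$ is finite, the simply connected cover $S^4 \times S^3$ of $\RP^4 \times SO(3)$ is also the universal cover of $X$, and $|\pi_1(X)| = 2 \cdot 2 \cdot 4 = 16$.

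Next I would construct convenient deck transformations of $S^4 \times S^3 \to X$. The linear maps $g_1, g_2 \in O(5)$ restrict to orthogonal involutions of $S^4$ covering the corresponding automorphisms of $\RP^4$. Identifying $SO(3) = S^3 / \{\pm 1\}$ with the unit quaternions, the matrices $g_1, g_2$ are $180^\circ$-rotations about the axes $(1,1,0)/\sqrt 2$ and $(1,-1,0)/\sqrt 2$ respectively, so they lift to left multiplication by the unit quaternions
\[
\tilde g_1 := (i + j)/\sqrt 2, \qquad \tilde g_2 := (i - j)/\sqrt 2.
\]
A direct calculation in the quaternions gives
\[
\tilde g_1^2 = \tilde g_2^2 = -1, \qquad \tilde g_1 \tilde g_2 = -k, \qquad \tilde g_2 \tilde g_1 = k,
\]
so $\langle \tilde g_1, \tilde g_2 \rangle \cong Q_8$ inside $S^3$. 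I now set $G_i(v, q) := (g_i v, \tilde g_i q)$ for $i=1,2$ and $\alpha := (-\id, \id)$; each of these is a deck transformation of $S^4 \times S^3 \to X$ and hence lies in $\pi_1(X)$.

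To finish I would show that $\pi_1(X) = \langle \alpha \rangle \times \langle G_1, G_2 \rangle$. The projection to the $S^3$-factor yields a surjection $\langle G_1, G_2 \rangle \twoheadrightarrow \langle \tilde g_1, \tilde g_2 \rangle \cong Q_8$; this is injective because whenever a word $\tilde g_{i_1}\cdots\tilde g_{i_n}$ equals $1$ in $S^3$ its image $g_{i_1}\cdots g_{i_n}$ in $SO(3)$ equals $1$, so $G_{i_1}\cdots G_{i_n}$ already acts trivially on both factors. Thus $\langle G_1, G_2\rangle \cong Q_8$. On the other hand $\alpha \notin \langle G_1, G_2\rangle$, because each element of $\langle G_1, G_2\rangle$ acts on $\R^5$ through one of $\{1, g_1, g_2, g_1 g_2\}$ and none of these equals $-\id$. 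Finally, $\alpha$ is central in $\pi_1(X)$ since it commutes with every linear map on $S^4$ and acts trivially on $S^3$, so $\langle\alpha\rangle$ and $\langle G_1, G_2\rangle$ form an internal direct product of order $2 \cdot 8 = 16 = |\pi_1(X)|$, yielding $\pi_1(X) \cong \Z_2 \times Q_8$. The main obstacle is the quaternion bookkeeping: everything hinges on the fact that $\tilde g_1, \tilde g_2$ anti-commute in $S^3$ even though their images $g_1, g_2$ commute in $SO(3)$, which is exactly what promotes $\Z_2 \times \Z_2$ to $Q_8$ upon lifting.
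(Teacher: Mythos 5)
Your proof is correct and follows essentially the same route as the paper: both realize $\pi_1(X)$ as the deck group of the universal cover $S^4\times S^3\to X$ by lifting the $D_2$-action to a $\Z_2\x Q_8$-action, with the $\Z_2$ acting antipodally on the $S^4$ factor. The only differences are expository — you make explicit the freeness of the action, the order count $|\pi_1(X)|=16$, and the quaternionic anticommutation that promotes $D_2$ to $Q_8$, all of which the paper leaves implicit or handles by a ``without loss of generality'' normalization of the lifts to $\pm i,\pm j$.
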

\begin{proof}
First,  the universal cover of $\RP^4\x SO(3)$ is $S^4\x S^3$; in fact, without loss of generality one may assume the double covering $S^3\to SO(3)$ maps $\pm i$ to $g_1$ and $\pm j$ to $g_2$, and thus $Q_8$ to $D_2$. Then to prove the lemma,   it  suffices to construct a $\Z_2\x Q_8$-action on $S^4\x S^3$ that descends, under the projections $S^4\x S^3\to\RP^4\x SO(3)$ and $\Z_2\x Q_8\to 1\x D_2$, to the $D_2$-action on $\RP^4\x SO(3)$ defining $X$.

First, we define a $ Q_8$-action on $S^4\x S^3$ by
$$(\pm i)\cdot((a_0,a_1,a_2,a_3,a_4),q):=((-a_0,a_2,a_1,-a_3,a_4),\pm iq),$$
$$(\pm j)\cdot((a_0,a_1,a_2,a_3,a_4),q):=((-a_0,-a_2,-a_1,-a_3,a_4),\pm jq).$$
Then, let $\Z_2$ act on $S^4\x S^3$  by  acting antipodally on {\it only  the $S^4$ factor}. After checking  these two actions commute, we obtain a $\Z_2\x Q_8$-action on $S^4\x S^3$, and it is straightforward to check that this action has the desired property.
\end{proof}

Now,  abelianizing $\pi_1(X)=\Z_2\x Q_8$, we have $H_1(X;\Z)=\Z_2\x D_2$, which then by the universal coefficient theorem gives
$    H_1(X;\Z_2)= \Z_2 \x D_2.$
In fact, some first homology classes can be described explicitly as follows. Denote $e_0:=(1,0,0,0,0)$, and let $\tilde{x}_0:=(e_0,1)$ be the base point in $S^4\x S^3$. Then consider the path 
$$\{((a_0,0,0,\sqrt{1-a_0^2},0),1):-1\leq a_0\leq 1\}$$
in $S^4\x S^3$ joining $\tilde x_0$ to $(-e_0,1)$, a path joining $\tilde x_0$ to  $(e_0,i)$, and a path joining $\tilde x_0$ to $(e_0,j)$. Call the projection of these three paths onto $X$, which are actually loops, $c_1,c_2$ and $c_3$ respectively. Then
$[c_1]=(1,\id)$, $[c_2]=(0,g_1)$, and $[c_3]=(0,g_2)$
in $H_1(X;\Z_2)=\Z_2\x D_2$, and hence they  form a base.

\begin{lem}\label{1sweepout}
$(1,\id), (0,g_1), (0,g_2),(1,g_1g_2)$ are exactly the homology classes that give 1-sweepouts under $\Phi_{7}$.
\end{lem}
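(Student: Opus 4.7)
The plan is to exploit the $\Z_2$-linearity of the ``being a 1-sweepout'' condition. A loop $\gamma:S^1\to X$ yields a 1-sweepout under $\Phi_7$ iff $(\Phi_7\circ\gamma)_*[S^1]\ne 0$ in $H_1(\Zc;\Z_2)\cong\Z_2$; this defines a homomorphism $\Lambda:H_1(X;\Z_2)\to\Z_2$. It therefore suffices to compute $\Lambda$ on the basis $\{[c_1],[c_2],[c_3]\}$ found above and then extend by linearity to all eight elements of $\Z_2\x D_2$. I would invoke the standard geometric characterization: a loop $t\mapsto\Sigma_t$ of relative $2$-cycles in $\mathbb B^3$ is a $1$-sweepout iff, for a.e.\ $p\in\mathbb B^3$, the cardinality $\#\{t:p\in\Sigma_t\}$ is odd.

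I would then check each basis element in turn. For $[c_1]$, the loop traces $[a_0:0:0:a_3:0]\mapsto\{a_0(x^2-y^2)+a_3z=0\}\cap\mathbb B^3$; for a generic $p=(x_0,y_0,z_0)$ with $x_0^2\ne y_0^2$ and $z_0\ne 0$, the linear equation $a_0(x_0^2-y_0^2)+a_3z_0=0$ has a unique solution $[a_0:a_3]\in\RP^1$, so the count is exactly $1$. For $[c_2]$, $\Phi_7\circ c_2$ is the family $t\mapsto Q(t)^{-1}(\{x^2-y^2=0\})\cap\mathbb B^3$ for a path $Q$ in $SO(3)$ from $\id$ to $g_1$; the count of $t$ with $p\in Q(t)^{-1}(\{x^2-y^2=0\})$ equals the number of crossings of the arc $t\mapsto Q(t)p$ with $\{x=y\}\cup\{x=-y\}$. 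Since $g_1(x,y,z)=(y,x,-z)$, the quantity $x-y$ flips sign along the arc while $x+y$ does not, so the crossings with $\{x=y\}$ are odd and those with $\{x=-y\}$ are even, yielding an odd total. For $[c_3]$, the analogous calculation using $g_2(x,y,z)=(-y,-x,-z)$ interchanges the roles of $x-y$ and $x+y$ and again gives an odd total. Hence $\Lambda([c_1])=\Lambda([c_2])=\Lambda([c_3])=1$.

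Linearity then gives $\Lambda\big((\epsilon,g_1^{a}g_2^{b})\big)\equiv\epsilon+a+b\pmod 2$, and the four classes on which $\Lambda$ equals $1$ are exactly $(1,\id)$, $(0,g_1)$, $(0,g_2)$, $(1,g_1g_2)$, as claimed. The main obstacle will be justifying the generic-intersection characterization of the $1$-sweepout property in a way compatible with the merely $\Fc$-continuous setting of $\Zc$. I would handle this by noting that each $\Phi_7\circ c_i$ is a family of algebraic surfaces, smooth away from a $1$-codimensional locus in the parameter, so one may perturb $p$ off the finitely many singular sets and compute intersections classically; the identification of the resulting mod-$2$ count with pairing against $\bar\lambda\in H^1(\Zc;\Z_2)$ is a standard consequence of the Poincar\'e-dual description of $\bar\lambda$ via the Almgren isomorphism (cf.\ \cite{MN17,LMN18}).
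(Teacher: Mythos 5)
Your proof is correct and follows essentially the same route as the paper: reduce by $\Z_2$-linearity to the basis $[c_1],[c_2],[c_3]$ and verify each gives a 1-sweepout via a mod-2 degree/parity count at a generic point (your sign-change argument for $x-y$ and $x+y$ along the rotation arcs for $c_2,c_3$ is precisely the paper's observation that the $180^\circ$ rotation ``switches the inside and the outside'' of $\{x^2-y^2=0\}$). The only cosmetic difference is at $[c_1]$, where the paper homotopes the loop within $\RP^4$ to the family of horizontal planes instead of counting solutions of the linear equation directly.
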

\begin{proof} It suffices to show that $(1,\id), (0,g_1), (0,g_2)$ give 1-sweepouts.
To show that $(1,\id)$ gives a 1-sweepout, note that $$\Phi_7\circ c_1=\Phi_4(\{[a_0:0:0:a_3:0]:a_0^2+a_3^2=1\}).$$
But in $\RP^4$ the loop $\{a_0^2+a_3^2=1\}$ is homotopic to $\{a_3^2+a_4^2=1\}$, which under $\Phi_4$ gives the collection of all horizontal planes together with the empty set, and that certainly is a 1-sweepout. So $\Phi_7\circ c_1$ is a 1-sweepout.

To show that $(0,g_1)$ gives a 1-sweepout, note that $\Phi_{7}\circ c_2$ gives the motion of rotating the intersecting planes $\{x^2-y^2=0\}$ about the axis  $\{z=0, x=y\}$ by $180^\circ$. Let us call $\{|y|>|x|\}$ the inside region of $\{x^2-y^2=0\}$, and $\{|y|<|x|\}$ the outside. Then the rotation {\it switches the inside and the outside}.
So  $\Phi_{7}\circ c_2$ is a 1-sweepout.

The proof that $(0,g_2)$ gives a 1-sweepout is similar.
\end{proof}

\medskip{\noindent \bf Step 2.} By the universal coefficient theorem, 
$
    H^1(X;\Z_2)= \Hom(H_1(X;\Z_2),\Z_2).
$
Since $[c_1], [c_2], [c_3]$ form a base of $H_1(X;\Z_2)$, we can define respectively their Hom-duals  $\lambda_i:=[c_i]^* \in H^1(X;\Z_2)$ for $i=1,2,3.$ Let $\lambda=\lambda_1+\lambda_2+\lambda_3$, then by Lemma \ref{1sweepout}, $\lambda$ is the cohomology class that detects exactly the 1-sweepouts.
Hence, to prove Proposition \ref{6sweepout}, we  need $\lambda^6\ne 0$. We are going to prove this by considering the Poincar\'e dual of $\lambda^6$, so let us first understand the Poincar\'e dual $PD(\lambda_i)\in H_6(X;\Z_2)$ of $\lambda_i$.

In the remaining of \S \ref{6sweepoutproof}, {\it we will  view $X$ as an $\RP^4$-bundle over the base $B:=SO(3)/D_2$}, and let $p:X\to B$ be the projection. Let $A_0$ be the 6-dimensional subbundle of $X$ over $B$ on which  $a_0=0$. Note that $A_0$ is well-defined because the subset $\{a_0=0\}$ of $\RP^4\x SO(3)$ is $D_2$-invariant.

\begin{lem}\label{PD1}  $PD(\lambda_1)=[A_0]$ in $H_6(X;\Z_2)$.
\end{lem}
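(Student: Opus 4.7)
The plan is to invoke Poincaré duality with $\Z_2$-coefficients on the closed $7$-manifold $X$, which identifies $H^1(X;\Z_2) \cong H_6(X;\Z_2)$ and realizes the pairing between $\lambda_1$ and a $1$-cycle $\gamma$ as the mod-$2$ intersection number of $\gamma$ with any transverse hypersurface representing $PD(\lambda_1)$. Under this identification, $PD(\lambda_1) = [A_0]$ reduces to the three equalities $\lambda_1([c_i]) \equiv A_0 \cdot c_i \pmod 2$ for $i = 1,2,3$, since $[c_1], [c_2], [c_3]$ form a basis of $H_1(X;\Z_2)$.

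First I would justify the geometric setup. The $D_2$-action on $SO(3)$ by left multiplication is free, so the diagonal action on $\RP^4 \times SO(3)$ is free; hence $X$ is a smooth closed $7$-manifold to which Poincaré duality applies. Since both $g_1$ and $g_2$ send the coordinate $a_0$ to $-a_0$, the locus $\{a_0 = 0\} \subset \RP^4 \times SO(3)$ is $D_2$-invariant, and its image $A_0 \subset X$ is a smooth closed hypersurface, defining a class in $H_6(X;\Z_2)$.

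The main computation is of the three intersection numbers, performed on the chosen lifts to $S^4 \times S^3$. The loop $c_1$ lifts to the arc $\{((a_0, 0, 0, \sqrt{1-a_0^2}, 0), 1) : -1 \leq a_0 \leq 1\}$, which crosses $\{a_0 = 0\}$ transversely at exactly one point. Because every element of the full deck group $\Z_2 \times Q_8$ acts on the $a_0$-coordinate as multiplication by $\pm 1$, no other point of this lift sits in the preimage of $A_0$. Thus $c_1$ meets $A_0$ transversely in exactly one point, so $A_0 \cdot c_1 \equiv 1 \pmod 2$. For $c_2$ and $c_3$, I would take the lifts of the form $\{(e_0, q(t))\}$ with $q(t)$ a path in $S^3$ from $1$ to $i$, respectively from $1$ to $j$; these lifts have $a_0 \equiv 1$, and the $\Z_2 \times Q_8$-orbit of $a_0 = 1$ only contains $a_0 = \pm 1$, so the projected loops $c_2$ and $c_3$ miss $A_0$ entirely, giving $A_0 \cdot c_2 = A_0 \cdot c_3 = 0$.

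These three values agree with $\lambda_1([c_1]) = 1$ and $\lambda_1([c_2]) = \lambda_1([c_3]) = 0$ coming from the definition $\lambda_i = [c_i]^*$, so $PD(\lambda_1) = [A_0]$. The only subtlety requiring care in the write-up is checking that, after passing through the quotient, the lifts of $c_2$ and $c_3$ genuinely avoid $A_0$; this is where the explicit formulas for the $D_2$-action on the $\RP^4$-coordinates are needed, and it is the one non-automatic input in the argument.
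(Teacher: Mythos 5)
Your proof is correct and takes essentially the same approach as the paper, whose entire argument is that $c_1$ meets $A_0$ transversely in the single point $D_2\cdot([0{:}0{:}0{:}1{:}0],\id)$. Your additional verification that $c_2$ and $c_3$ can be represented by loops with $a_0\equiv\pm1$ and hence miss $A_0$ is exactly the step the paper leaves implicit here (but carries out explicitly in the companion computation of $PD(\lambda_2)$ and $PD(\lambda_3)$), and it is done correctly.
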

\begin{proof}
This is because the loop $c_1$ intersects $A_0$ at only one point in $X$, $D_2\cdot ([0:0:0:1:0],\id).$
\end{proof}

To construct $PD(\lambda_2+\lambda_3)$, we will need to know the cohomology groups of $B$, which is $S^3/Q_8$. We  quote the result \cite[Theorem 2.2 (1)]{TZ08} of Tomoda-Zvengrowsk:
\begin{prop}\label{cohoring} The cohomology ring $H^*(S^3/Q_8;\Z_2)$ is given by
$$\Z_2[\alpha_1, \alpha'_1, \alpha_2, \alpha'_2, \alpha_3]/\sim,$$
in which the subscript of each generator denotes its degree, and  $\sim$ denotes the following  equivalence:
\begin{align*}
 &\alpha^2_1=\alpha_2+\alpha'_2,\;
 \alpha_1\alpha'_1=\alpha'_2,\;
 (\alpha'_1)^2=\alpha_2, \\
 & \alpha_1 \alpha_2=\alpha_1\alpha'_2= \alpha'_1\alpha'_2=\alpha_3,\; \alpha'_1\alpha_2=0,\\
 & \textrm{products of cohomology classes with total degree greater than 3 is 0.}
\end{align*}
\end{prop}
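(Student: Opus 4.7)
The plan is to reduce the computation to the classifying space $BQ_{8}$. The free action of $Q_{8}$ on $S^{3}$ makes $S^{3} \to S^{3}/Q_{8}$ a principal $Q_{8}$-bundle, which fits into the Borel fibration
\begin{equation*}
S^{3} \longrightarrow S^{3}/Q_{8} \longrightarrow BQ_{8}.
\end{equation*}
Since $H^{*}(S^{3};\Z_{2})$ is concentrated in degrees $0$ and $3$, the Serre spectral sequence has only two nonzero rows, and in total degrees $k \leq 3$ no differential is incident to $E_{2}^{k,0}$. The edge homomorphism therefore yields a ring isomorphism $H^{k}(BQ_{8};\Z_{2}) \xrightarrow{\cong} H^{k}(S^{3}/Q_{8};\Z_{2})$ for $0 \leq k \leq 3$, which accounts for the full cohomology of the closed $3$-manifold $S^{3}/Q_{8}$.

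The next step is to compute $H^{\leq 3}(BQ_{8};\Z_{2})$ via the Lyndon--Hochschild--Serre spectral sequence of the central extension $1 \to \Z_{2} \to Q_{8} \to D_{2} \to 1$. The $E_{2}$-page is $H^{*}(BD_{2};\Z_{2}) \otimes H^{*}(B\Z_{2};\Z_{2}) = \Z_{2}[a,b,c]$ with $|a| = |b| = |c| = 1$. The transgression sends $c$ to the mod-$2$ reduction of the extension class of $Q_{8}$, which I would identify as the anisotropic quadratic form $a^{2}+ab+b^{2}$ by matching the combinatorial fact that $Q_{8}$ is the unique central $\Z_{2}$-extension of $D_{2}$ in which every element outside the kernel has order $4$. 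By Kudo's transgression theorem, $d_{3}(c^{2}) = \mathrm{Sq}^{1}(a^{2}+ab+b^{2}) = a^{2}b + ab^{2}$. Reading off $E_{\infty}$ in total degree at most $3$ produces the classes $\alpha_{1} := a$, $\alpha'_{1} := b$ in degree $1$; $\alpha_{2} := b^{2}$, $\alpha'_{2} := ab$ in degree $2$ (with the surviving relation $a^{2} = ab+b^{2}$); and a single nonzero class $\alpha_{3} := a^{2}b = ab^{2}$ in degree $3$.

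All the listed multiplicative relations then become direct substitutions in $\Z_{2}[a,b]$ modulo the ideal $(a^{2}+ab+b^{2},\, a^{2}b+ab^{2})$. Indeed $\alpha_{1}^{2} = a^{2} = ab+b^{2} = \alpha'_{2} + \alpha_{2}$, $\alpha_{1}\alpha'_{1} = ab = \alpha'_{2}$, $(\alpha'_{1})^{2} = b^{2} = \alpha_{2}$; the three products $\alpha_{1}\alpha_{2}$, $\alpha_{1}\alpha'_{2}$, $\alpha'_{1}\alpha'_{2}$ all collapse to $a^{2}b = ab^{2} = \alpha_{3}$; and $\alpha'_{1}\alpha_{2} = b^{3} = b(a^{2}+ab) + (a^{2}b+ab^{2}) = 0$. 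The vanishing of every product of total degree greater than $3$ is automatic, since $S^{3}/Q_{8}$ is a closed $3$-manifold.

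The step I expect to be the main obstacle is pinning down $d_{2}(c) \in H^{2}(BD_{2};\Z_{2})$ as precisely the anisotropic form $a^{2}+ab+b^{2}$ rather than one of the degenerate alternatives such as $a^{2}$, $ab$, or $a^{2}+ab$. A clean way to resolve this is to exploit the fact that the restriction of the extension class to any $\Z_{2}$ subgroup of $D_{2}$ detects whether its preimage in $Q_{8}$ is $\Z_{4}$ or $\Z_{2}\times\Z_{2}$; since every such preimage in $Q_{8}$ is a cyclic group of order $4$, this restriction must be nonzero on each line in $\Z_{2}^{2}$, which forces the extension class to be anisotropic and hence equal to $a^{2}+ab+b^{2}$ up to a change of basis in $\{a,b\}$. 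Once this identification is made, the remainder of the proof is routine spectral sequence bookkeeping.
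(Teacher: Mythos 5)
Your argument is correct in substance, but it takes a genuinely different route from the paper: the paper offers no proof of Proposition \ref{cohoring} at all and simply quotes it from Tomoda--Zvengrowski \cite{TZ08}. What you have written is essentially the classical computation of $H^*(Q_8;\Z_2)$ (as in Adem--Milgram), transported to the space form $S^3/Q_8$ via the low-degree edge isomorphism of the fibration $S^3\to S^3/Q_8\to BQ_8$. Your identification of $d_2(c)$ with the anisotropic form $a^2+ab+b^2$, by restricting the extension class to the three order-two subgroups of $D_2$ and noting that each has preimage $\Z_4$ in $Q_8$, is exactly the right way to pin down the only nontrivial input, and the listed relations then follow by substitution as you say. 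What your route buys is a self-contained proof together with an explicit dictionary between the abstract generators $\alpha_1,\alpha_1',\alpha_2,\alpha_2',\alpha_3$ and monomials in classes pulled back from $BD_2$ --- which is in fact what the paper needs downstream (the facts that every degree-one class of $B$ squares nontrivially and cubes to zero, used in Lemma \ref{intersect}, are immediate from your presentation); the cost is the spectral-sequence bookkeeping that the citation sidesteps.

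Two small points should be tightened. First, in degree $3$ the absence of differentials into $E_2^{3,0}$ gives only injectivity of the edge map $H^3(BQ_8;\Z_2)\to H^3(S^3/Q_8;\Z_2)$; for it to be an isomorphism you also need $E_\infty^{0,3}=0$. This follows by comparing dimensions: both groups are $\Z_2$ (Poincar\'e duality for the closed orientable $3$-manifold on one side, your $E_\infty$ count for $BQ_8$ on the other), so an injection between them is an isomorphism. Second, ``reading off $E_\infty$ in total degree at most $3$'' should include disposing of $ac^2$ and $bc^2$ on the $E_3$-page: by multiplicativity $d_3(ac^2)=a\,\mathrm{Sq}^1(a^2+ab+b^2)=a^3b+a^2b^2$ and $d_3(bc^2)=a^2b^2+ab^3$, and one checks that these and their sum are nonzero in $E_3^{4,0}=\Z_2[a,b]_4/(a^2+ab+b^2)\cdot\Z_2[a,b]_2$, so that $E_\infty^{1,2}=0$ and $H^3(BQ_8;\Z_2)$ is indeed the single class $a^2b=ab^2$. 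Neither point is a real obstacle; with them included the proof is complete.
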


Now, from the definition of $c_2$ and $c_3$,  we know $p\circ c_2$ and $p\circ c_3$ form a base in $H_1(B;\Z_2)\cong \Z_2\x\Z_2$. Remembering $^*$  denotes the Hom-dual, we let $b_2, b_3$ be 2-dimensional submanifolds in $B$ such that
$$[b_2]=PD([p\circ c_2]^*),[b_3]=PD([p\circ c_3]^*)\in H_2(B;\Z_2).$$
We moreover assume $b_2$ and $b_3$ intersect transversely, and define $b:=b_2\cup b_3$.
Let $X|_{b_2}$ be  the restriction  of the $\RP^4$-bundle $X$ over the base $b_2$, and  similarly for $X|_{b_3}$ and $X|_{b}$. Note that they are 6-dimensional.

\begin{lem}\label{PD2} In $H_6(X;\Z_2)$ we have:
\begin{enumerate}
\item  $PD(\lambda_2)=[X|_{b_2}]$.
\item  $PD(\lambda_3)=[X|_{b_3}]$.
\item  $PD(\lambda_2+\lambda_3)=[X|_b]$.
\end{enumerate}
\end{lem}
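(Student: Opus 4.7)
The plan is to exploit the naturality of Poincar\'e duality under the bundle projection $p\colon X\to B$. Specifically, $\lambda_2$ and $\lambda_3$ should turn out to be pullbacks from $B$ of classes whose Poincar\'e duals in $B$ are $[b_2]$ and $[b_3]$, so that (1) and (2) will follow immediately from the general principle that pulling back a submanifold of the base represents the pullback of its Poincar\'e dual.

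First, I would verify that $\lambda_j=p^*\mu_j$ for $j=2,3$, where $\mu_j:=[p\circ c_j]^*\in H^1(B;\Z_2)$. The loop $c_1$ is the projection of a path in $S^4\times\{1\}\subset S^4\times S^3$, so $p_*[c_1]=0$, while $p_*[c_2]$ and $p_*[c_3]$ form the basis of $H_1(B;\Z_2)$ dual to $\mu_2,\mu_3$. Dualizing, $p^*\mu_j$ annihilates $[c_1]$ and $[c_k]$ for $k\ne j$ in $\{2,3\}$ and pairs non-trivially with $[c_j]$, which is exactly the characterization of $\lambda_j$.

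For (1) and (2), I would then invoke the following standard naturality statement: for a smooth fiber bundle $p\colon X\to B$ between closed manifolds and a closed smooth submanifold $S\subset B$, the preimage $p^{-1}(S)$ is a closed submanifold of the same codimension, and
\begin{equation*}
PD_X(p^*\mu)=[p^{-1}(S)]\in H_*(X;\Z_2)\quad\text{whenever}\quad PD_B(\mu)=[S].
\end{equation*}
This holds because the normal bundle of $p^{-1}(S)$ in $X$ is the pullback of the normal bundle of $S$ in $B$, so the corresponding Thom classes (and hence Poincar\'e duals) correspond under $p^*$. Applied with $(S,\mu)=(b_2,\mu_2)$ and $(b_3,\mu_3)$, this yields (1) and (2).

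For (3), linearity of Poincar\'e duality gives $PD(\lambda_2+\lambda_3)=[X|_{b_2}]+[X|_{b_3}]$, and it remains to identify this with $[X|_b]$. Since $b_2$ and $b_3$ meet transversely in a $1$-submanifold of $B$, the intersection $X|_{b_2}\cap X|_{b_3}=X|_{b_2\cap b_3}$ has codimension one inside each of $X|_{b_2}$ and $X|_{b_3}$; in $\Z_2$-coefficients, the chain $[X|_{b_2}]+[X|_{b_3}]$ counts this common locus twice and hence agrees with the mod-$2$ fundamental class of the union $X|_{b_2\cup b_3}=X|_b$, proving (3). The only non-bookkeeping ingredient is the naturality formula above, which I expect to be the main obstacle to write down cleanly; however, in the smooth fiber bundle setting with $\Z_2$-coefficients it is a routine consequence of transversality and the Thom isomorphism, so no genuinely new work is required.
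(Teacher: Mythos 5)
Your proposal is correct, and it reaches the same conclusion by a slightly different mechanism than the paper. The paper's proof is a direct intersection-number verification: since $H^1(X;\Z_2)=\Hom(H_1(X;\Z_2),\Z_2)$ and $[c_1],[c_2],[c_3]$ is a basis, it suffices to check that the 6-cycle $X|_{b_2}$ meets $c_1,c_2,c_3$ in $0,1,0$ points mod 2, which reduces immediately to intersection numbers of $b_2$ with $p\circ c_1$ (a point, avoidable by perturbation), $p\circ c_2$, and $p\circ c_3$ in the base $B$. You instead factor the argument as $\lambda_j=p^*\mu_j$ followed by naturality of Poincar\'e duality under the bundle projection (via pullback of normal bundles and Thom classes). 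Both routes rest on the same computation downstairs --- that $p\circ c_1$ is nullhomotopic while $p\circ c_2,p\circ c_3$ give the dual basis to $\mu_2,\mu_3$ --- so neither is more work than the other; your version makes the role of the bundle structure explicit and would generalize to dual classes of higher degree pulled back from $B$, while the paper's version avoids quoting the Thom-class naturality lemma and needs only the identification of $H^1$ with $\Hom(H_1,\Z_2)$. Your justification of (3) is phrased a bit loosely (the union $b=b_2\cup b_3$ is not a submanifold, and one should simply say that the mod-2 cycle carried by $b$ is the chain $b_2+b_3$, the transverse intersection locus being of lower dimension and hence irrelevant to the 2-chain), but this matches what the paper itself asserts without comment and is not a gap.
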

\begin{proof} Since $\lambda_2$ is the Hom-dual of $[c_2]$,  to show that $[X|_{b_2}]=PD(\lambda_2)$, it suffices to show that the intersections number of $X|_{b_2}$ with $c_1,c_2$ and $c_3$ respectively are $0,1$, and 0. Indeed, this is true because, respectively, $b_2$ can be perturbed to avoid the point $p\circ c_1$ (in $B$), the intersection number of $b_2$ with $p\circ c_2$ is 1, and the intersection number of $b_2$ with $p\circ c_3$ is 0.

 Similarly, one can show $PD(\lambda_3)=[X|_{b_3}]$, and thus $PD(\lambda_2+\lambda_3)=[X|_{b_2}]+[X|_{b_3}]=[X|_b].$
\end{proof}

\medskip{\noindent \bf Step 3.}
Note that $$\lambda^6
=\lambda_1^6+\lambda_1^4(\lambda_2+\lambda_3)^2+\lambda_1^2(\lambda_2+\lambda_3)^4+(\lambda_2+\lambda_3)^6.$$ To show $\lambda^6\ne 0$, it suffices to show the following lemma.
\begin{lem}\label{intersect}
In the cohomology ring $H^*(X;\Z_2)$ we have:
\begin{enumerate}
    \item $(\lambda_2+\lambda_3)^3=0$.
    \item $\lambda_1^4(\lambda_2+\lambda_3)^2=1$.
    \item $\lambda_1^5=0$.
\end{enumerate}

\end{lem}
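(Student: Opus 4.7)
The plan is to view $X = \frac{\RP^4 \times SO(3)}{D_2}$ as the projective bundle $\RP(E)$, where $E = (\R^5 \times SO(3))/D_2$ is the rank-$5$ real vector bundle over $B$ associated to the linear $D_2$-action on $\R^5$ that descends to the given action on $\RP^4$. Since the loops $c_2, c_3$ lie in the horizontal direction (their projections to each $\RP^4$-fiber are constant at $[e_0]$), their duals pull back from the base: $\lambda_2 = p^* \beta_2$ and $\lambda_3 = p^* \beta_3$, where $\beta_2, \beta_3 \in H^1(B; \Z_2)$ are the Hom-duals of $[p \circ c_2], [p \circ c_3]$. Under the ring structure of Proposition \ref{cohoring}, $\{\beta_2, \beta_3\}$ matches (in some order) the generating pair $\{\alpha_1, \alpha'_1\}$ in degree $1$.

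Claim (1) then reduces to showing $(\alpha_1 + \alpha'_1)^3 = 0$ in $H^3(B;\Z_2)$. Using the relations in Proposition \ref{cohoring}: $\alpha_1^3 = \alpha_1(\alpha_2 + \alpha'_2) = \alpha_3 + \alpha_3 = 0$, $(\alpha'_1)^3 = \alpha'_1 \alpha_2 = 0$, while both $\alpha_1^2 \alpha'_1$ and $\alpha_1 (\alpha'_1)^2$ equal $\alpha_3$, so the total binomial sum is $\alpha_3 + \alpha_3 = 0$ modulo $2$.

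For (2) and (3), I decompose the $D_2$-representation on $\R^5$ into characters: the coordinates $a_0, a_3$ each carry the character with $g_1, g_2 \mapsto -1$ (call the associated line bundle $L_3$); the coordinate $a_4$ is trivial; and $\mathrm{span}(a_1+a_2)$, $\mathrm{span}(a_1-a_2)$ carry the mixed characters $L_2, L_1$ sending $(g_1,g_2)$ to $(1,-1)$ and $(-1,1)$ respectively. Thus $E \cong L_3^{\oplus 2} \oplus \underline{\R} \oplus L_1 \oplus L_2$. Writing $\ell_i = w_1(L_i)$ and evaluating monodromy on $[p \circ c_2], [p \circ c_3]$ gives $\ell_1 = \beta_2$, $\ell_2 = \beta_3$, and $\ell_3 = \beta_2 + \beta_3$. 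The Whitney formula modulo $2$ then yields $w_1(E) = \ell_3$, $w_2(E) = \ell_1 \ell_2 + \ell_3^2 = \alpha'_2 + \alpha'_2 = 0$, and $w_3(E) = \ell_3^3 = 0$ by (1).

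The main technical point — and the step I expect to be the most subtle — is identifying $\lambda_1$ inside the projective bundle. While both $\lambda_1$ and the tautological SW class $h \in H^1(\RP(E); \Z_2)$ restrict to the generator on each fiber, they may a priori differ by a pullback from $B$. Since $a_0$ transforms under $D_2$ by the character $L_3$, it descends to a bundle map $E \to L_3$, whose restriction to the tautological line $\mathcal{O}(-1) \subset p^* E$ gives a section of $\mathcal{O}(1) \otimes p^* L_3$; its zero locus is $A_0$, so $[A_0]$ is Poincar\'e dual to $h + p^* \ell_3$, and Lemma \ref{PD1} forces $\lambda_1 = h + p^* \ell_3$. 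The projective bundle relation $h^5 = w_1(E) h^4 + w_2(E) h^3 + w_3(E) h^2 = \ell_3 h^4$, together with $\ell_3^k = 0$ for $k \geq 4$ (since $\dim B = 3$) and mod-$2$ binomial expansion, then gives $\lambda_1^5 = (h + \ell_3)^5 = h^5 + h^4 \ell_3 = \ell_3 h^4 + \ell_3 h^4 = 0$, proving (3). Finally, $\lambda_1^4 = (h+\ell_3)^4 = h^4$ and $(\lambda_2+\lambda_3)^2 = p^*\alpha'_2$, so $\lambda_1^4(\lambda_2+\lambda_3)^2 = h^4 \cdot p^* \alpha'_2$, which is nonzero in $H^6(X; \Z_2)$ by Leray-Hirsch since $\alpha'_2 \neq 0$ in $H^2(B; \Z_2)$; this establishes (2).
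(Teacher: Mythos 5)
Your argument is correct, but it takes a genuinely different route from the paper's. The paper works entirely with geometric representatives: it introduces the subbundles $A_1,\dots,A_4$ (where $a_1=a_2$, $a_1=-a_2$, $a_3=0$, $a_4=0$), proves Lemma \ref{a0a1} expressing their classes in terms of $[A_0]$, $[X|_{b_2}]$, $[X|_{b_3}]$ by intersecting with explicitly perturbed loops $\tilde c_2,\tilde c_3$, and then evaluates the sixfold product as an intersection number (the computation (\ref{longform})). You instead compute inside the ring $H^*(X;\Z_2)$ structurally: Leray--Hirsch for $\RP(E)\to B$ with $E\cong L_3^{\oplus2}\oplus\underline{\R}\oplus L_1\oplus L_2$, the Whitney formula for $w(E)$, and the identification $\lambda_1=h+p^*\ell_3$ via the Euler class of $\mathcal O(1)\otimes p^*L_3$, whose transverse zero locus is $A_0$. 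Your character decomposition of the $D_2$-representation is correct, and the framework is consistent with the paper: for instance it gives $PD^{-1}[A_1]=h+\ell_1=h+\beta_2=\lambda_1+\lambda_3$, which is exactly Lemma \ref{a0a1}(1). Your approach buys all of Lemma \ref{a0a1} and the product computations in one stroke, at the price of invoking the real projective-bundle theorem; the paper's route is more elementary but needs the careful loop perturbations of its Step 4.

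Two small points. First, your assertion that $\{\beta_2,\beta_3\}$ coincides with the generating pair $\{\alpha_1,\alpha'_1\}$ of Proposition \ref{cohoring} is not justified and need not hold for that specific presentation (the basis could a priori be, say, $\{\alpha_1,\alpha_1+\alpha'_1\}$). Fortunately nothing you use depends on it: from the relations one checks that every nonzero element of $H^1(B;\Z_2)$ has nonzero square and zero cube, and that $xy+x^2+y^2=0$ for every basis $\{x,y\}$, so $w_2(E)=0$, $(\lambda_2+\lambda_3)^3=0$, and $(\lambda_2+\lambda_3)^2\neq0$ all survive without the identification. Second, the "$=1$" in part (2) should be read as "nonzero in $H^6(X;\Z_2)$," which is what you prove and all that the proof of Proposition \ref{6sweepout} requires.
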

\begin{proof}
To prove (1), it suffices to perturb three copies of $X|_{b}$ and show that their intersection is empty. To achieve this, we can just perturb three copies of the base $b\subset B$. But their intersection number will be 0, because by Proposition \ref{cohoring} any element of $H^1(B;\Z_2)$ cubes to 0. Hence we have proven (1).

To prove (2) and (3), we need to find different representatives of $[A_0]$. Let $A_1,A_2,A_3$, and $A_4$ be the subbundle of $X$ over $B$ on which $a_1=a_2$,  $a_1=-a_2$, $a_3=0$, and $a_4=0$ respectively.

\begin{lem}\label{a0a1} In $H_6(X;\Z_2)$ we have:
\begin{enumerate} 
    \item $[A_1]=[A_0]+[X|_{b_3}].$
    \item $[A_2]=[A_0]+[X|_{b_2}]$.
    \item $[A_3]=[A_0]$.
    \item $[A_4]=[A_0]+[X|_{b}]$.
\end{enumerate}
\end{lem}

We postpone the proof of  Lemma \ref{a0a1} to step 4. 

To prove (2) of Lemma \ref{intersect}, first note that $A_0\cap A_1\cap A_2\cap A_4$ is the $[0:0:0:1:0]$-bundle over $B$. Also, $b$ can be perturbed to some $\tilde b$ such that $b\cap \tilde b$ is non-trivial in $H_1(B;\Z_2)$, because Proposition \ref{cohoring} says the square of any element in $H^1(B;\Z_2)$ is non-trivial. As a result,  $A_0\cap A_1\cap A_2\cap A_4\cap X|_b\cap X|_{\tilde{b}}$ is non-trivial in $H_1(X;\Z_2)$. Hence, writing $\mu:=\lambda_2+\lambda_3$ for simplicity, we have
\begin{align}\label{longform}
    1&= PD(A_0)PD( A_1)PD( A_2)PD( A_4)PD (X|_b)^2\\
    &=\lambda_1(\lambda_1+\lambda_3)(\lambda_1+\lambda_2)(\lambda_1+\mu)\mu^2\nonumber\\
    &=\lambda_1^4\mu^2+\lambda_1^3(\lambda_2+\lambda_3+\mu)\mu^2
    +\lambda_1^2(\lambda_2\lambda_3+\lambda_2\mu+\lambda_3\mu)\mu^2+\lambda_1\lambda_2\lambda_3\mu^3\nonumber\\
    &=\lambda_1^4\mu^2.\nonumber
\end{align}
The first equality above is from Lemma \ref{a0a1}. The last equality holds because $\lambda_2+\lambda_3+\mu,\lambda_2\lambda_3+\lambda_2\mu+\lambda_3\mu,$ and $\lambda_2\lambda_3\mu$ all are zero: This is straightforward to check by considering how the bases $b_2,b_3$, and $b$ intersect using Proposition \ref{cohoring}. Hence, we have proven (2) of Lemma \ref{intersect}.

To prove (3) of Lemma \ref{intersect}, one  note that $A_0\cap A_1\cap A_2\cap A_3\cap A_4$ is empty, and then $\lambda_1^5=0$ would follow from a calculation analogous to (\ref{longform}).
\end{proof}

Lemma \ref{intersect} implies $\lambda^6\ne 0$, finishing the proof of Proposition \ref{6sweepout}.

\medskip{\noindent \bf Step 4.} Finally, let us prove Lemma \ref{a0a1}.

\begin{proof}[Proof of Lemma \ref{a0a1}]
For Lemma \ref{a0a1} (3): $[A_3]=[A_0]$ because we can homotope $A_0$ to $A_3$ using the
$\{(1-s)a_0=sa_3\}$-bundles over $B$, for $0\leq s\leq 1$. (Note that we cannot prove, say, $[A_2]=[A_0]$ this way because the $\{(1-s)a_0=sa_2\}$-bundles over $B$ is not well-defined.)

We now  prove  Lemma \ref{a0a1} (1), by acting on the basic elements
$[c_1],[c_2],$ and $[c_3]$ of $H_1(X;\Z_2)$. More precisely, recall that $\lambda_i$ is by definition the Hom-dual of $c_i$. So by Lemma \ref{PD1} and \ref{PD2}, $PD([A_0]+[X|_{b_3}])$ acts on $[c_1],[c_2]$, and $[c_3]$ to give 1+0, 0+0, and 0+1 respectively.
Therefore, to prove (1) it suffices to show that
\begin{itemize}
    \item $PD([A_1])[c_1]=1$.
    \item $PD([A_1])[ c_2]=0$.
    \item $PD([A_1])[ c_3]=1$.
\end{itemize}

To show that $PD([A_1])[c_1]=1$, just observe that we can homotope $c_1$ to the loop $
    \{[0:a_1:0:0:a_4]:a_1^2+a_4^2=1\}
$ within the same fiber $X|_{D_2\cdot \id}$ as $\pi_1(\RP^4)=\Z_2$, and this loop intersects $A_1$ only once.

To show that $PD([A_1])[c_2]=0$, we will perturb $c_2$ to another loop $\tilde c_2$ as follows. Let $d_2:[0,1]\to SO(3)$ be the path that lifts $p\circ c_2\subset B=SO(3)/D_2$, starts at $\id$, and ends at $g_1$. Fix a small constant $\epsilon_0>0$. We define $\tilde c_2:[0,1]\to X$ to be such that it is over the same  base $p\circ c_2$ as $c_2$, but has different fibers:
\begin{align}\label{loop2}
    \tilde c_2(s):= &\;D_2\cdot ([1:\epsilon_0:-\epsilon_0:0:0],d_2(s))
\end{align}
Then one can check that $\tilde c_2(0)=\tilde c_2(1)$ so that $\tilde c_2$ is  a loop, and $\tilde c_2$ does not intersect $A_1$. Thus $PD([A_1])[c_2]=0$.

To show that $PD([A_1])[c_3]=1$, we will perturb $c_3$ to $\tilde c_3$ as follows. Let $d_3:[0,1]\to SO(3)$ be the path that lifts $p\circ c_3$, starts at $\id$, and ends at $g_2$. This time, we let $\epsilon$ be a function from $[0,1]$ to $\R$ that  strictly decreases from $\epsilon_0$ to $-\epsilon_0$, for some fixed small $\epsilon_0>0$. We define $\tilde c_3:[0,1]\to X$ by:
\begin{align}\label{loop3}
    \tilde c_3(s):=&\;D_2\cdot ([1:\epsilon(s):-\epsilon(s):0:0],d_3(s))
\end{align}
One can again check  $\tilde c_3$ is indeed a loop, but  $\tilde c_3$ intersects $A_1$ at one point: where $\epsilon(s)=0$. Thus $PD([A_1])[c_3]=1$.
This finishes the proof of Lemma \ref{a0a1} (1).

The proof of Lemma \ref{a0a1} (2) is  similar. We only state the modifications needed:
To prove $PD([A_2])[c_2]=1$ and $PD([A_2])[c_3]=0$, instead of (\ref{loop2}) and (\ref{loop3}), respectively, we use 
$$
    D_2 \cdot([1:\epsilon(s):\epsilon(s):0:0],d_2(s))\textrm{ and }
     D_2\cdot([1:\epsilon_0:\epsilon_0:0:0],d_3(s)).$$

The proof of Lemma \ref{a0a1} (4) is also similar. We only state the modifications needed: To prove $PD([A_4])[c_2]=1$ and $PD([A_4])[c_3]=1$, instead of (\ref{loop2}) and  (\ref{loop3}), respectively, we use 
$$
    D_2\cdot ([1:0:0:0:\epsilon(s)],d_2(s))
\textrm{ and }
    D_2\cdot([1:0:0:0:\epsilon(s)],d_3(s)).
$$

\end{proof}

\subsection{Proof of Proposition \ref{localmax}}\label{localmaxproof}$\;$

\medskip{\noindent \bf Step 1.}
For convenience, let us reparametrize the family $\Phi_5:\RP^4\x[0,1]\to\Zc$ as follows. First, we write $\RP^4$ as $\R^4\sqcup\RP^3$ in which $\RP^3$ is where $a_0=0$.  Then on $\R^4\x[0,1]$, we reparametrize the family  $\Phi_5$ by
$$\Phi_5(b_1,b_2,b_3,{b_4},b_5):=\{(x-b_1)^2-(y-b_2)^2+b_3z+{b_4}+b_5z^3=0\}\cap \mathbb B^3.$$
Throughout this section {\it we will adopt this new parametrization}. And then our goal is to show that $\area\circ\Phi_5$ has a strict local maximum at $(0,0,0,0,0)$. In fact, it suffices to prove:
\begin{prop} \label{neg0}
There exists $\epsilon_1,\epsilon_2>0$ such that for any $(b_1,b_2)\in (-\epsilon_1,\epsilon_1)^2$, $(b_3,b_4,b_5)\in \R^2\x [0,1]$ such that $b_3^2+b_4^2+b_5^2=1$, and $t\in (0,\epsilon_2)$, 
\begin{equation}\label{decre}
    \area(\Phi_5(b_1,b_2,b_3t,b_4t,b_5t))<\area(\Phi_5(0,0,0,0,0)).
\end{equation}
\end{prop}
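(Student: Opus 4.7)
The plan is to split the comparison into the pure-translation slice $t=0$ (where the area is exactly computable) and the desingularization effect $t > 0$ (which I will show produces an area loss of order $\sqrt t$ uniformly in the other parameters).

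\medskip
\textbf{The $t = 0$ slice.} Here $\Phi_5(b_1, b_2, 0, 0, 0) = \{(x-b_1)^2 = (y-b_2)^2\} \cap \mathbb B^3$ is the union of the two flat disks cut from $\mathbb B^3$ by the planes $\{x - y = b_1 - b_2\}$ and $\{x + y = b_1 + b_2\}$, which lie at distances $|b_1 - b_2|/\sqrt 2$ and $|b_1 + b_2|/\sqrt 2$ from the origin respectively. Since a flat disk in $\mathbb B^3$ at distance $d$ from the center has area $\pi(1 - d^2)$, one obtains
\[
\area(\Phi_5(b_1, b_2, 0, 0, 0)) = 2\pi - \pi(b_1^2 + b_2^2),
\]
which already implies (\ref{decre}) at $t = 0$ with strict inequality whenever $(b_1, b_2) \ne (0,0)$.

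\medskip
\textbf{The $t > 0$ slice.} Set $q(z) := b_3 z + b_4 + b_5 z^3$ and $\tilde x := x - b_1$, $\tilde y := y - b_2$. In the tilded coordinates the surface is $\{\tilde y^2 = \tilde x^2 + t q(z)\}$ inside the shifted ball, which I parametrize as the two graphs $\tilde y = \pm \sqrt{\tilde x^2 + t q(z)}$ over $\{\tilde x^2 + t q(z) \ge 0\}$. A direct computation gives the area element
\[
\sqrt{\frac{2\tilde x^2 + t q(z) + t^2 q'(z)^2/4}{\tilde x^2 + t q(z)}}\, d\tilde x\, dz
\]
on each graph, to be compared with the constant $\sqrt 2\, d\tilde x\, dz$ of the cross $\tilde y = \pm \tilde x$. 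Rescaling $\tilde x = \sqrt{|t q(z)|}\, \xi$ and integrating the difference in $\xi$, the area of each perturbed graph minus the corresponding cross graph equals $-\sqrt{|t q(z)|}\,(I_\pm + O(\sqrt t))$, where $I_+$ and $I_-$ are strictly positive dimensionless constants, one for each sign of $q(z)$, arising from the integrable profile of the rectangular hyperbola (respectively, the band that the desingularization removes when $t q(z) < 0$). Integrating in $z$ against a cutoff $\eta$ supported strictly inside $(-1, 1)$ so that the ball constraint is harmlessly satisfied for all small $(b_1, b_2)$, I obtain
\[
\area(\Phi_5(b_1, b_2, b_3 t, b_4 t, b_5 t)) \le \area(\Phi_5(b_1, b_2, 0, 0, 0)) - c_1 \sqrt t \int_{-1}^{1} \sqrt{|q(z)|}\, \eta(z)\, dz + O(t)
\]
for $c_1 := \min\{I_+, I_-\} > 0$.

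\medskip
\textbf{Uniformity and conclusion.} Since $(b_3, b_4, b_5) \in S^2$, $q$ is a nonzero polynomial of degree at most $3$; by compactness of $S^2$ the integral $\int_{-1}^{1} \sqrt{|q(z)|}\, \eta(z)\, dz$ is bounded below by a uniform $c_0 > 0$. Combined with the $t=0$ formula,
\[
\area(\Phi_5(b_1, b_2, b_3 t, b_4 t, b_5 t)) \le 2\pi - \pi (b_1^2 + b_2^2) - c_1 c_0 \sqrt t + O(t) < 2\pi
\]
for all sufficiently small $t > 0$ and $(b_1, b_2)$, proving (\ref{decre}). The main obstacle will be making the formal $\sqrt t$-gain rigorous and uniform: the area integrand has an integrable singularity along $\tilde x^2 + t q(z) = 0$ and the sheet domains depend on all parameters, so one must carefully truncate both near the ball boundary and near the nodal set of $q$, and show that the remainder is $O(t)$, dominated by the $\sqrt t$ gain.
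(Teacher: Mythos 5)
Your strategy is genuinely different from the paper's: you compare areas directly by writing each surface as a pair of graphs $\tilde y=\pm\sqrt{\tilde x^2+tq(z)}$ and integrating the difference of area elements against the cross, whereas the paper integrates the first variation formula in the opening parameter $s$ over a slightly enlarged domain $\Omega$. Your leading-order mechanism is sound and matches the paper's: the rescaling $\tilde x=\sqrt{t|q(z)|}\,\xi$ producing a gain $-c\sqrt{t}\int\sqrt{|q|}$ is the analogue of the paper's estimate $I_1<-1/(C\sqrt s)$ integrated over $s\in(0,t)$, and your compactness argument on $S^2$ giving $\int\sqrt{|q|}\,\eta\ge c_0>0$ plays the role of the paper's Lemma C.1 on cubic polynomials. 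The exact formula $2\pi-\pi(b_1^2+b_2^2)$ for the translated cross is correct and is a clean substitute for invoking Proposition \ref{globalmax} on this slice.

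The genuine gap is the claim that everything else is $O(t)$, uniformly in all parameters. The error does not come only from the singular fold of the integrand or from the $t^2q'^2/4$ term (those are indeed harmless); it comes from the fact that the two surfaces being compared occupy \emph{different domains} in the $(\tilde x,z)$-plane, because they exit $\partial\mathbb B^3$ at different places. At a fixed height $z$ the hyperbola and the cross are compared inside a disk of radius $\sqrt{1-z^2}$ centered at $(-b_1,-b_2)$; their exit points differ by the normal displacement $\sqrt{\tilde x^2+tq}-|\tilde x|$, which is $O(t)$ only when $|\tilde x|\gtrsim 1$, but is of the full size $\sqrt{t|q|}$ — the same order as your gain — wherever the boundary circle comes within $O(\sqrt t)$ of the axis $\{x=b_1,y=b_2\}$. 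This happens precisely near the poles $z=\pm1$, where the disks shrink, the axis exits the ball, and the surface can be nearly tangent to $\partial\mathbb B^3$; a cutoff $\eta(z)$ supported in $(-1,1)$ discards the gain there but does not rule out an area \emph{excess} of order $\sqrt t$ there, which would destroy the argument. Controlling exactly this region is what the paper spends most of its proof on: it replaces $\mathbb B^3$ by a domain $\Omega$ flattened inside a cylinder of radius $R=20\sqrt t$ about the axis, pays an initial area cost $O(t)$ for doing so, and then estimates the boundary contributions $I_3,\dots,I_6$ at the scales $R$, $2R$, $1/4$ separately (picking up an admissible $\log$-divergence in $I_5$). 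Your proposal acknowledges this obstacle in its last sentence but does not supply the multiscale boundary analysis needed to close it, so as written the proof is incomplete at its hardest point.
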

Geometrically, $t$ governs how much the surface opens up (see Remark \ref{beh}). Namely, it is elementary to show that the width of the hole opened up in $\Sigma_t$ is at most $\sqrt{3t}$.

\medskip{\noindent \bf Step 2.}
We begin to prove Proposition \ref{neg0}. Let  $b_1,b_2,b_3,b_4,b_5$ satisfy the assumptions --- we will explain how small $\epsilon_1,\epsilon_2$ need to be later. For each $t\in(0,\epsilon_2)$, denote 
$\tilde\Sigma_t:=\Phi_5(b_1,b_2,b_3t,b_4t,b_5t).$ 
Then by Lemma \ref{firstvar},
$$\frac d{dt}\area(\tilde\Sigma_t)=-\int_{\tilde\Sigma_t}{\bf H}\cdot V-\int_{\partial\tilde\Sigma_t}\frac{{\bf n}\cdot w}{{\nu}\cdot w}\;V\cdot {\bf n},$$
which one would hope to show to be negative in order to prove Proposition \ref{localmax}.
However, the second integral is difficult to bound, since $\nu\cdot w$ can be zero on $\partial \mathbb B^3$.  {\it To prevent $\nu\cdot w=0$ on $\partial \mathbb B^3$, we will  slightly enlarge $\mathbb B^3$ to some domain $\Omega$}, which we will soon define. Then
for each $s\in (0,t]$, we let
\begin{equation}\label{sigma}
    \Sigma_s:=\{(x-b_1)^2-(y-b_2)^2+s(b_3z+{b_4}+b_5z^3)=0\}\cap \Omega.
\end{equation}
(Here $\Sigma_s$ has a boundary, so the notation is different from Lemma \ref{firstvar}.) Then
\begin{align}
    \nonumber &\;\area(\Phi_5(b_1,b_2,b_3t,b_4t,b_5t))-\area(\Phi_5(0,0,0,0,0))\\
\nonumber <&\;\area(\Sigma_t)-\area(\Phi_5(0,0,0,0,0))\\
\label{neg}=&\;\left(\area(\Sigma_0)-\area(\Phi_5(0,0,0,0,0))\right)+\int_0^t\frac d{ds}\area(\Sigma_s)ds
\end{align}
Therefore to prove Proposition \ref{neg0}, it suffices to show that expression (\ref{neg}) is negative. We will achieve this by showing {\it the initial area added by enlarging $\mathbb B^3$ to $\Omega$, which is the first term of (\ref{neg}), is dominated by the area decrease as $s$ increases from 0 to $t$, which is the second term of (\ref{neg}).}

But let us first define  $\Omega$. We now fix $t\in(0,\epsilon_2)$ also.  The new region $\Omega$ {\it will depend on} $t$ as follows. Let 
\begin{equation}\label{Rdef}
    R:=20\sqrt{t}.
\end{equation} Let $S_1,S_2,S_3$ be the {\it solid} cylinders in $\R^3$ with axis $\{x=b_1,y=b_2\}$ and radius $R,2R,\frac 14$ respectively. By letting $\epsilon_2$ be small we can assume $S_2\subset S_3$. Let $\Omega$ be the unit 3-ball with a bump within $S_2$, such that {\it $\partial \Omega$ becomes horizontal in $S_1$} (see Figure \ref{fig:omega}). Moreover, let us view $\partial\mathbb B^3\cap S_2$ (resp. 
$\partial\Omega\cap S_2$)  as the 2-sheeted graph of some function $\pm f$ (resp.  $\pm g$) over a disk $D$ on the $xy$-plane. Then since $\dist(p,(0,0))<2\epsilon_1+40\sqrt{\epsilon_2}$ for all $p\in D$, we know that $|\nabla f|\lesssim (2\epsilon_1+40\sqrt{\epsilon_2})^2$  (here $\lesssim$ means the inequality holds up to a multiplicative constant that is universal), thus {\it we can also assume   $|\nabla g|\lesssim (2\epsilon_1+40\sqrt{\epsilon_2})^2$}. As a result, 
if we write the outward unit normal $w$ of $\Omega$ as $(w_1,w_2,w_3)$, then for $\epsilon_1,\epsilon_2$ sufficiently small, we have in $S_2$
\begin{equation}\label{wcomp}
    |\nabla g|,|w_1|,|w_2|<C''(\epsilon_1+\epsilon_2)
\end{equation}
for some universal constant $C''$.

\begin{figure}
\centering
\includegraphics[width=3in]{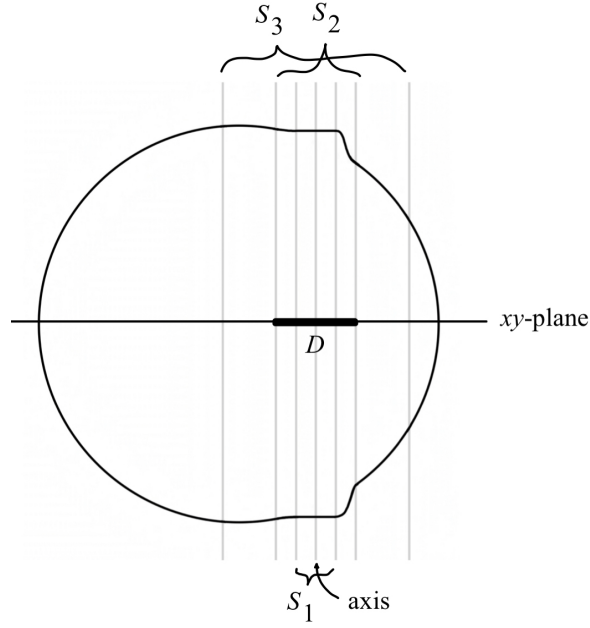}
\caption{A schematic picture of $\Omega$.}
\label{fig:omega}
\end{figure}

Now we have defined $\Omega$, {\it it suffices to show that expression (\ref{neg}) is negative.} The second term of (\ref{neg}) can be computed using the first variation formula in Lemma \ref{firstvar}. In order to estimate, let us derive some preliminary results in the next step.

\medskip{\noindent \bf Step 3.} We are interested in surfaces $\Sigma_s$ defined in (\ref{sigma}), which is the zero set in $\Omega$ of the polynomial
$$p(x,y,z):=(x-b_1)^2-(y-b_2)^2+b_3sz+{b_4}s+b_5sz^3,$$
for $s$ increases from 0 to $t$, where $t\in (0,\epsilon_2)$ is fixed.
Note that 
\begin{align*} 
\partial_s p=&\;b_3z+b_4+b_5 z^3,\; \nabla p=(2(x-b_1),-2(y-b_2),b_3s+3b_5sz^2),\\ \Hess(p)=&\begin{pmatrix}
2 & 0 & 0\\ 
0 & -2 & 0\\ 
0 & 0 & 6b_5sz
\end{pmatrix},\; \Delta p=6b_5sz.
\end{align*}
Moreover, denoting ${\bf H}=H{\bf n}$,
\begin{align}
\nonumber H=&\;\frac{\nabla p\;\Hess(p)\nabla p^T-|\nabla p|^2\Delta p}{|\nabla p|^3}\\
\nonumber=&\;\frac 1{|\nabla p|^3} [8(x-b_1)^2-8(y-b_2)^2+6b_5sz(b_3s+3b_5sz^2)^2\\
\nonumber&-(4(x-b_1)^2+4(y-b_2)^2+(b_3s+3b_5sz^2)^2)(6b_5sz)]\\
\label{mean}=&\;\frac 1{|\nabla p|^3}[-8s\partial_s p-24b_5sz((x-b_1)^2+(y-b_2)^2)],
\end{align}
in which in the third equality we  used $p=0$ on $\Sigma_s$. We can choose $\frac{\nabla p}{|\nabla p|}$ as the normal vector field $\bf n$, and $V:=-\frac{\partial_s p}{|\nabla p|^2}\nabla p$ as the deformation vector field of $\Sigma_s$ (because by differentiating $p(s,{\bf x}(s))=0$ with respect to $s$, one has 
$\partial_sp+\nabla p\cdot {\bf x}'=0$). As a result, by (\ref{mean}) and Lemma \ref{firstvar},
\begin{align}
\nonumber\frac d{ds}\area(\Sigma_s)&=-\int_{\Sigma_s}{\bf H}\cdot V-\int_{\partial\Sigma_s}\frac{{\bf n}\cdot w}{{\nu}\cdot w}\;V\cdot {\bf n}\\
\label{6terms}&=\int_{\Sigma_s}-\frac{8 s(\partial_s p)^2}{|\nabla p|^4}
-\frac{24 b_5sz\partial_s p((x-b_1)^2+(y-b_2)^2)}{|\nabla p|^4}\\
\nonumber&\;\;\;-\left(\int_{\partial\Sigma_s\cap S_1}+\int_{\partial\Sigma_s\cap (S_2\backslash S_1)}+\int_{\partial\Sigma_s\cap (S_3\backslash S_2)}+\int_{\partial\Sigma_s\cap (\R^3\backslash S_3)}\right)\frac{{\bf n}\cdot w}{{\nu}\cdot w}\;V\cdot {\bf n}.
\end{align}
We can write this as a {\it sum} of six integrals, which we will denote in the above order as $I_1,I_2,...,I_6$.

Now, remember that to prove Proposition \ref{neg0}, it suffices to show that expression (\ref{neg}) is negative. We claim that it  suffices to prove:
\begin{lem}\label{seven} There exist some large universal constant $C>0$ and small $\epsilon_1,\epsilon_2>0$ such that the following is true. For any $(b_1,b_2)\in (-\epsilon_1,\epsilon_1)^2$, $(b_3,b_4,b_5)\in \R^2\x [0,1]$ such that $b_3^2+b_4^2+b_5^2=1$, and $0<s<t<\epsilon_2$, we have:
\begin{itemize}
    \item $I_1<-\frac{1}{C\sqrt s}<0$.
    \item $|I_2|,|I_3|,|I_4|,|I_6|<C$.
    \item $|I_5|<-C\log(400s)$.
    \item $\area(\Sigma_0)-\area(\Phi_5(0,0,0,0,0))<Ct$.
\end{itemize}
\end{lem}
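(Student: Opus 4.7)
The plan is a direct asymptotic analysis of the six terms in (\ref{6terms}) using cylindrical coordinates around the singular line $\{u=v=0\}$, where $u=x-b_1$ and $v=y-b_2$. Setting $q(z)=b_3z+b_4+b_5z^3$ and parametrizing the branch $v>0$ of $\Sigma_s$ as the graph $v=\sqrt{u^2+sq(z)}$, a short computation gives the area element $dA=|\nabla p|\,du\,dz/(2|v|)$ and, on the surface, $|\nabla p|^2=8u^2+4sq(z)+s^2 q'(z)^2$. The minimum of $|\nabla p|$ at each height $z$ is attained at $u=0$ and has size $\sim\sqrt{s|q(z)|}$; this ``waist'' drives the dominant negative term $I_1$.

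For $I_1=-\int 4sq^2/(|v||\nabla p|^3)\,du\,dz$, the integrand near the waist is of order $1/s$ over a $u$-window of width $\sim\sqrt{s|q|}$, contributing $\sim 1/\sqrt{s}$ per unit of $z$. Because $(b_3,b_4,b_5)$ is a unit vector and $q$ has degree at most $3$, a compactness argument on $S^2$ produces uniform constants $\delta_0,\mu_0>0$ and a $z$-set of measure $\geq\mu_0$ on which $|q|\geq\delta_0$, yielding $I_1<-1/(C\sqrt{s})$. In $I_2$, the numerator carries an extra factor $r^2=2u^2+sq$ that cancels the waist singularity, leaving a bounded integrand near the axis and an $O(1/u^2)$ tail, hence $|I_2|<C$.

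For the boundary terms I would exploit the identity $({\bf n}\cdot w)/(\nu\cdot w)=\cot\theta$, where $\theta$ is the angle between ${\bf n}$ and the outward normal $w$ of $\Omega$. In $S_1$, $w$ is vertical by construction, the curve has bounded length, and $\cos\theta=sq'/|\nabla p|$ is bounded, giving $|I_3|<C$. In $S_2\backslash S_1$, the bound $r\gtrsim\sqrt{t}$ together with (\ref{wcomp}) force ${\bf n}\cdot w=O(\epsilon+s/\sqrt{t})$ and $\sin\theta\sim 1$, so $|I_4|<C$. Outside $S_3$ the distance $r\geq 1/4$ keeps $|\nabla p|$ and $\nu\cdot w$ uniformly bounded below, whence $|I_6|<C$. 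The initial area difference reduces to the area of the intersecting planes $\{u^2=v^2\}$ inside the bump $\Omega\backslash\mathbb B^3$: this region sits in a cylinder of radius $\sim\sqrt{t}$ and vertical extent $O(t)$, contributing area $O(t^{3/2})<Ct$.

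The main obstacle is $I_5$, on the piece of $\partial\mathbb B^3$ with $2R<r<1/4$, where the surface can be nearly tangent to the sphere and $\nu\cdot w=\sin\theta$ degenerates. Using $p=0$ gives ${\bf n}\cdot w=(-2sq+2ub_1-2vb_2+sq'z)/|\nabla p|$. I expect the tangency set ${\bf n}\parallel w$ to consist of finitely many points at which $\nu\cdot w$ vanishes linearly in the arc-length parameter along $\partial\Sigma_s$; a one-dimensional integration then produces a logarithmic divergence whose scale is set by the $s$-dependence of the tangency locations (controlled by $|\nabla p|\sim\sqrt{s}$ near the waist), yielding $|I_5|<-C\log(400s)$. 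Combining all six bounds, $\int_0^t(I_1+\dots+I_6)\,ds$ is dominated by the $-2\sqrt{t}/C$ contribution from $I_1$, which defeats the $O(t)$ initial area difference and the $O(t\log(1/t))$ contribution from $I_5$ for all sufficiently small $t$, making (\ref{neg}) negative and hence proving Proposition \ref{neg0}.
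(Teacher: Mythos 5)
Your treatment of $I_1$, $I_2$, $I_3$, $I_4$, $I_6$, and the initial area difference is essentially the argument the paper gives: the waist computation for $I_1$ (the paper extracts the uniform lower bound on $|q|$ via an explicit lemma on cubics rather than compactness on $S^2$, and bounds the integral by projecting the two-sheeted graph onto a rectangle in the $xz$-plane, but the mechanism and the order $1/\sqrt{s}$ are the same), the cancellation of the waist singularity in $I_2$ via $|\nabla p|^2\geq 4((x-b_1)^2+(y-b_2)^2)\geq 4s|\partial_s p|$, the horizontality of $w$ in $S_1$, the smallness of ${\bf n}\cdot w$ in $S_2\backslash S_1$ from (\ref{wcomp}), and the uniform lower bound on $|\nabla p|$ outside $S_3$. (One small omission: your graph parametrization $v=\sqrt{u^2+sq}$ only covers the heights where $sq\geq 0$; for $sq<0$ you must graph over the $vz$-plane instead, as the paper does in its second case.)

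The genuine gap is $I_5$. You attribute the logarithm to a near-tangency of $\Sigma_s$ with $\partial\mathbb B^3$ in $S_3\backslash S_2$, i.e.\ to zeros of $\nu\cdot w$, and you leave the finiteness and linear non-degeneracy of that tangency set, as well as the $s$-scale of the cutoff, as an expectation. But in this region there is no tangency at all: from the very identity you wrote, $\nabla p\cdot w=-2sq+2ub_1-2vb_2+sq'z$ together with $|\nabla p|\geq 2\sqrt{u^2+v^2}\geq 4R$ and $s\leq t=(R/20)^2$ gives $|{\bf n}\cdot w|\lesssim s/R+\epsilon_1<\tfrac1{10}$ (this is (\ref{nw2})), so $\nu\cdot w$ is bounded away from zero uniformly --- geometrically, $\Sigma_s$ there is close to the pair of vertical planes through a point $\epsilon_1$-close to the center, which meet the sphere nearly orthogonally. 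The logarithmic divergence comes instead from the factor $V\cdot{\bf n}=-\partial_sp/|\nabla p|$, which blows up like $1/|x-b_1|$ as the boundary curve approaches the axis; projecting $\partial\Sigma_s\cap(S_3\backslash S_2)$ to the $x$-axis with derivative bounded below and integrating $1/|x-b_1|$ over $[b_1-\tfrac14,b_1-R]\cup[b_1+R,b_1+\tfrac14]$ yields $|I_5|\lesssim-\log R\lesssim-\log(400s)$. As written, your mechanism for $I_5$ would not survive scrutiny, and without a correct bound on $I_5$ the final comparison in (\ref{neg}) is not established.
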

Indeed, from this lemma it follows that in (\ref{neg}), when $\epsilon_1,\epsilon_2$ are small, the dominating term is $\int^t_0I_1ds$, which is of order $\sqrt t$ and is negative. Thus the expression (\ref{neg}) is negative, as desired.

\medskip{\noindent \bf Step 4.} We now begin to prove Lemma \ref{seven}, by bounding the seven quantities listed one by one.

First, $I_1<0$ is clear, so we just need to lower bound $|I_1|$. By  Lemma \ref{cubicfun}, there exists a universal constant $h>0$ and an interval $[z_0,z_0+\frac 18]\subset[-\frac 12,\frac 12]$  on which $\partial_s p=b_3z+b_4+b_5z^3>h$ or $\partial_s p<-h$. 

Let us first tackle the  case $\partial_s p>h$. Namely, we have
\begin{align*}
|I_1|&= \int_{\Sigma_s} \frac{8s(\partial_s p)^2}{|\nabla p|^4}\geq\int_{\Sigma_s\cap\{z_0<z<z_0+\frac 18\}} \frac{8sh^2}{(8(x-b_1)^2+4s\partial_sp+(b_3+3b_5z^2)^2s^2)^2}\\
&\geq  \int_{\substack{\Sigma_s\cap\{z_0<z<z_0+\frac 18\}\\\cap\{-\frac 12<x<\frac 12\} }}
\frac{8sh^2}{(8(x-b_1)^2+13s)^2}.
 \end{align*}
Note that in the first inequality we rewrote $(y-b_2)^2$ in $|\nabla p|^4$ using  $p=0$, and used that $\partial_s p>h$ for $z\in[z_0,z_0+\frac 18]$; while the second inequality holds because $|\partial_s p|\leq 3$ and  $(b_3+3b_5z^2)^2s^2<s$ if $\epsilon_2$ and thus $s$ is small. Moreover, note that the domain of the last integral is a two-sheeted graph over the rectangle $[-\frac 12,\frac 12]\x[z_0,z_0+\frac 18]$ on the $xz$-plane, and clearly the graph has a larger area than the rectangle (see Figure \ref{fig:projsurf}). As a result,
\begin{align*}
|I_1|&>2\cdot\frac 18\int^{1/2}_{-1/2}  \frac{8sh^2}{(8(x-b_1)^2+13s)^2}dx
\gtrsim \frac{1}{\sqrt s}.\end{align*}
This finishes proving $I_1<-\frac{1}{C\sqrt s}$ for the case  $\partial_s p>h$. The second case $\partial_s p<-h$ is similar: One would integrate with respect to $y$ instead of $x$ in the last step.
\begin{figure}
\centering
\includegraphics[width=3in]{projsurf2.pdf}
\caption{}
\label{fig:projsurf}
\end{figure}

To bound $I_2$, we observe that
\begin{equation}\label{norm}
    |\nabla p|^2\geq 4(x-b_1)^2+4(y-b_2)^2\geq 4|(x-b_1)^2-(y-b_2)^2|=4s|\partial_s p|.
\end{equation}
So 
$$|I_2|\leq \int_{\Sigma_s}|24b_5z|
\frac{|s\partial_sp|}{|\nabla p|^2}
\frac{(x-b_1)^2+(y-b_2)^2}{|\nabla p|^2}
\leq \int_{\Sigma_s}24\cdot 1\cdot 1.$$
Now, since when $\epsilon_1,\epsilon_2$ are small enough, $\Sigma_s$ is close to $\Sigma_0$, which is two disks, we can assume $\area(\Sigma_s)<3\pi$. Hence, $|I_2|<24\cdot 3\pi$.

To bound $I_3$, note that 
\begin{equation}\label{integrand2}
    \frac{({\bf n}\cdot w)(V\cdot {\bf n})}{{\nu}\cdot w}=\frac{({\bf n}\cdot w)(V\cdot {\bf n})}{\sqrt{1-({\bf n}\cdot w)^2}}=\frac{({\bf n}\cdot w)(-\partial_s p)}{|\nabla p|\sqrt{1-({\bf n}\cdot w)^2}}
=\frac{(\nabla p\cdot w)(-\partial_sp)}{|\nabla p|\sqrt{|\nabla p|^2-(\nabla p\cdot w)^2}}.
\end{equation}
Now, inside $S_1$, $\partial\Omega$ is horizontal by definition  and so $w=\pm e_3$. Thus 
$$\abs{\frac{({\bf n}\cdot w)(V\cdot {\bf n})}{{\nu}\cdot w}}
\leq\frac{|\nabla p\cdot w||\partial_sp|}{|\nabla p|^2-(\nabla p\cdot w)^2}
\leq\frac{|s(b_3+3b_5z^2)||\partial_sp|}{4(x-b_1)^2+4(y-b_2)^2}
\leq\frac{4s|\partial_sp|}{4s|\partial_sp|}\leq 1,$$
in which the third inequality used (\ref{norm}). Now by Remark \ref{beh}, $\partial \Sigma_s\cap S_1$ is two  hyperbolas near respectively the north and the south pole (see Figure \ref{fig::sigmas}). Since the radius of $S_1$ is $R$, it is elementary to show that $\length(\partial \Sigma_s\cap S_1)<C'R$ for some universal constant $C'$.  Hence, using (\ref{Rdef}), $|I_3|<C'R<20C'\sqrt{\epsilon_2}$, which is less than some universal constant, assuming, say, $\epsilon_2<1$.
\begin{figure}
\centering
\includegraphics[width=3in]{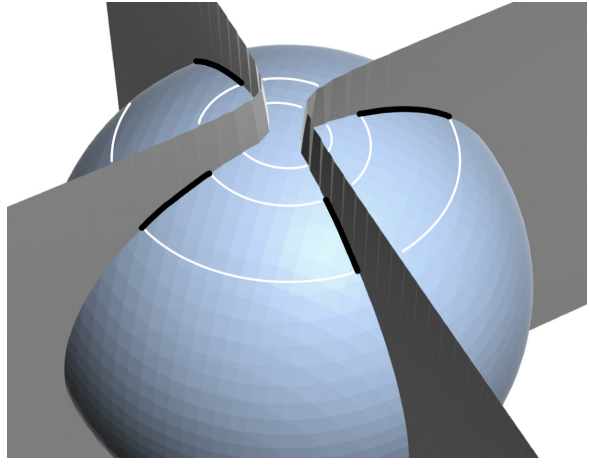}
\caption{This figure (not drawn to scale) shows $\partial \Omega$ intersecting the cubic surface defining $\Sigma_s$. The three white circles are $\partial\Omega\cap\partial S_i$, for $i=1,2,$ and $3$, which have radii $R,2R,$ and $\frac 14$ respectively. The black thick segments are $\partial\Sigma_s\cap( S_3\backslash S_2)$.}
\label{fig::sigmas}
\end{figure}

To bound $I_4$, using (\ref{wcomp}), we have
\begin{align}\label{nw}
    \abs{{\bf n}\cdot w}
    &\leq\frac{\abs{2(x-b_1)}}{|\nabla p|}\abs{w_1}+\frac{\abs{2(y-b_2)}}{|\nabla p|}\abs{w_2}+\frac{\abs{s(b_3+3b_5z^2)}}{|\nabla p|}\abs{w_3}\\
    \nonumber&\leq 1\cdot C''(\epsilon_1+\epsilon_2)+1\cdot C''(\epsilon_1+\epsilon_2) +\frac{4s}{\sqrt{4(x-b_1)^2+4(y-b_2)^2}}\cdot 1\\
    \nonumber&\leq 2C''(\epsilon_1+\epsilon_2)+\frac{4(R/20)^2}{2R}<\frac 1{10}. 
\end{align}
Note that in the second inequality we used that $s\leq t=(R/20)^2$ by (\ref{Rdef}), and in the last inequality assumed $\epsilon_1,\epsilon_2$ are small.

  Then using (\ref{integrand2}),
$$\abs{\frac{({\bf n}\cdot w)(V\cdot {\bf n})}{{\nu}\cdot w}}
=\frac{|{\bf n}\cdot w|{|\partial_s p|}}{|\nabla p|\sqrt{1-({\bf n}\cdot w)^2}}
\leq \frac{(1/10)\cdot 4}{2R\sqrt{1-(1/10)^2}}<\frac 1 R.$$
Again, it is elementary to show that the length of $\partial\Sigma_s\cap(S_2\backslash S_1)$ is less than $CR$ for some universal constant $C$. As a result, $|I_4|$ is less than $CR\cdot \frac 1 R=C$.

To bound $I_5$, using the fact that $p=0$ we can rewrite
\begin{align*}
\nabla p\cdot w&=2(x-b_1)x-2(y-b_2)y+s(b_3+3b_5z^2)z\\
    &=2(x-b_1)b_1-2(y-b_2)b_2+s(-b_3z-2b_4+b_5z^3).
\end{align*}
Therefore, using (\ref{Rdef}) and that $|\nabla p|>2R$, and assuming $\epsilon_1,\epsilon_2$ to be small,
\begin{align}\label{nw2}
    \abs{{\bf n}\cdot w}&\leq 
    \frac{\abs{2(x-b_1)}}{{|\nabla p|}}|b_1|+\frac{\abs{2(y-b_2)}}{|\nabla p|}|b_2|+\frac{\abs{s(-b_3z-2b_4+b_5z^3)}}{|\nabla p|}\\
    \nonumber&\leq 1\cdot \epsilon_1+1\cdot \epsilon_1+\frac{4s}{2R}\leq 2\epsilon_1+\frac{4(R/20)^2}{2R}<\frac 1{10}.
\end{align}
Then 
\begin{equation}\label{i5}
    \abs{\frac{({\bf n}\cdot w)(V\cdot {\bf n})}{{\nu}\cdot w}}
=\frac{|{\bf n}\cdot w||\partial_s p|}{|\nabla p|\sqrt{1-({\bf n}\cdot w)^2}}
\leq\frac{(1/10)\cdot 4}{\sqrt{4(x-b_1)^2}\sqrt{1-(1/10)^2}}
<\frac{1}{|x-b_1|}.
\end{equation}
Now, on $\Sigma_s\cap (S_3\backslash S_2)$, it follows from the definition that $|x-b_1|,|y-b_2|>R$. From this,  it is elementary to estimate ${\bf n}$ and show that in $S_3\backslash S_2$ the tangent planes of $\Sigma_s$ are close to that of $\{(x-b_1)^2-(y-b_2)^2=0\}$ (see Figure \ref{fig::sigmas}). In this step, the dependence of $R$ on $t$, namely $R=20\sqrt t$, is crucial: We choose $20\sqrt t$ because the width of the hole opened up in  $\Sigma_s$ is at most $\sqrt{3s}$, which we want to be  small compared to $R$. As a result, $\partial\Sigma_s\cap (S_3\backslash S_2)$ consists of eight  arcs such that if we let
$\rho$  be the orthogonal projection map from  $\partial\Sigma_s\cap (S_3\backslash S_2)$ to the $x$-axis, then the norm of the derivative $D\rho$ is lower bounded by some universal constant $C'>0$. In addition, the image $J$ of    $\rho$ is contained in $[b_1-\tfrac14,b_1-R]\cup[b_1+R,b_1+\tfrac14]$, and the preimage of each $x\in J$ has at most 4 points.
Therefore, by (\ref{i5}), for $\epsilon_2$ small, 
$$|I_5|\leq 4 \int_{[b_1-\tfrac14,b_1-R]\cup[b_1+R,b_1+\tfrac14]}\frac 1 {C'|x-b_1|} dx\lesssim -\log(R)\lesssim-\log(400s).$$

To bound $I_6$,
note that $\abs{\nabla p}>\frac 12$ outside $S_3$. Then as in (\ref{nw2}), we have ${\bf n}\cdot w<\frac 1 {10}$. It follows easily that the expression (\ref{integrand2}) is bounded by some universal constant. Then since $\length(\partial\Sigma_s\cap(\mathbb B^3\backslash S_3))$ bounded, so is $|I_6|$.

Finally, we prove the last item of Lemma \ref{seven}. Note that the difference between
$\Sigma_0$ and $\Phi_5(0,0,0,0,0)$ is $\{x^2-y^2=0\}\cap(\Omega\backslash \mathbb B^3)$, which is  four small planar pieces (two near the north pole and two near the south). Each piece can be contained in a rectangle of width $4R$ and, by (\ref{wcomp}), height $C''(\epsilon_1+\epsilon_2)(4R)$. As a result, using (\ref{Rdef}), $$\area(\Sigma_0)-\area(\Phi_5(0,0,0,0,0))\leq 4(4R)\cdot C''(\epsilon_1+\epsilon_2)(4R)\lesssim t.$$

This finishes the proof of Lemma \ref{seven}. Hence, we have proven Proposition \ref{neg0}, and thus Proposition \ref{localmax}.

\appendix
\section{Proof of Proposition \ref{globalmax}}\label{globalmaxproof}
The following proof is due to the MathOverflow user fedja \cite{fed}. Let $M$ denote the saddle  in $\R^3$ given by $x^2-y^2+z=0$. Then to prove Proposition \ref{globalmax},  it suffices, by rescaling, to show that for any ball $B$ with  center $(x_0,y_0,z_0)$ and radius $R>0$, the area of $M\cap B$ is less than $2\pi R^2$.

Recall that $M$ is foliated by straight lines: It can be parametrized by ${\bf x}(s,t)=(s+t,s-t,4st)$. 
Then  the Jacobian of $\bf x$ is  $2\sqrt{1+8s^2+8t^2}$. Thus we have
\begin{equation}\label{twoint}
    \area(M\cap B)<\iint_{\{(s,t):{\bf x}(s,t)\in B\}}(2\sqrt{1+8s^2}+2\sqrt{1+8t^2})dsdt.
\end{equation}
Now, for each fixed $s$, let $L_s$ be the corresponding coordinate line segment in $M\cap B$. Letting $d$ be the distance between $L_s$ and the center of $B$, we have
\begin{equation}\label{dist}
d^2\geq \min_{t\in\R}[(s+t-x_0)^2+(s-t-y_0)^2]= 2\left(s-\frac{x_0+y_0}2\right)^2.
\end{equation}
Note that $L_s$ is parameterized by a time interval of length
\begin{equation}\label{length}
    \frac{\length{(L_s)}}{\norm{\partial_t{\bf x}}}=\frac{2\sqrt{(R^2-d^2)^+}}{\sqrt{2(1+8s^2)}},
\end{equation}
where $^+$ denotes the positive part. It follows that, using (\ref{dist}) and (\ref{length}),
$$\iint_{\{(s,t):{\bf x}(s,t)\in B\}}2\sqrt{1+8s^2}dtds\leq\int_{s\in\R}2\sqrt 2\sqrt{\left[R^2-2\left(s-\frac{x_0+y_0}2\right)^2\right]^+} ds=\pi R^2.$$

The second integral  in (\ref{twoint}) can be similarly bounded, by integrating with respect to $s$ first. So  $\area(M\cap B)<2\pi R^2$, finishing the proof of Proposition \ref{globalmax}.

\section{First Variation Formula}
\begin{lem}\label{firstvar}
Let $\Omega$ be a compact $(n+1)$-dimensional region with smooth  boundary in $\R^{n+1}$, $\{\Sigma_s\}$  a 1-parameter family of hypersurfaces without boundary in $\mathbb R^3$, and $V$  a deformation  vector field of $\{\Sigma_s\}$. Then
$$\frac d{ds}\area(\Sigma_s\cap\Omega)=-\int_{\Sigma_s\cap\Omega}{\bf H}\cdot V-\int_{\partial(\Sigma_s\cap\Omega)}\frac{{\bf n}\cdot w}{{\nu}\cdot w}\;V\cdot {\bf n},$$
where $\bf n$ is a chosen unit normal vector field of $\Sigma_s$, $w$ the outward unit normal of $\partial \Omega$, and $\nu$ the outward unit conormal of $\Sigma_s$ on $\partial\Omega$.
\end{lem}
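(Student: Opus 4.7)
The formula differs from the standard first variation with boundary only in the boundary term; the key is that $V$ is a deformation of $\Sigma_s$ (an unbounded family) rather than of the constrained family $\Sigma_s\cap\Omega$, so the motion of the boundary curve $\Sigma_s\cap\partial\Omega$ inside $\partial\Omega$ must be computed. My plan is to reparametrize the family so that the boundary stays on $\partial\Omega$, derive the formula for the effective velocity $\dot p$ of a boundary point, and then apply the standard first variation.

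\textbf{Step 1: Reparametrize.} Fix $s_0$ and a point $p_0 \in \Sigma_{s_0}\cap\partial\Omega$. I will find a smooth curve $s\mapsto p(s)$ with $p(s_0)=p_0$ such that $p(s)\in \Sigma_s\cap\partial\Omega$ for $s$ near $s_0$. Doing this smoothly in $p_0$ gives a diffeomorphic parametrization of $\Sigma_s\cap\Omega$ whose velocity $\tilde V$ agrees with $V$ modulo a tangential-to-$\Sigma_s$ vector field in the interior, and with $\dot p(s_0)$ on $\partial(\Sigma_{s_0}\cap\Omega)$.

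\textbf{Step 2: Compute $\dot p\cdot \nu$.} The vector $\dot p$ satisfies two scalar constraints:
\begin{align*}
\dot p\cdot w &= 0 \quad (\text{since } p(s)\in\partial\Omega),\\
\dot p\cdot {\bf n} &= V\cdot {\bf n} \quad (\text{since } p(s)\in\Sigma_s).
\end{align*}
Decomposing $\dot p = a\,{\bf n}+b\,\nu+\text{(tangent to }\partial(\Sigma_s\cap\Omega))$ and using that vectors tangent to $\partial(\Sigma_s\cap\Omega)$ are tangent to $\partial\Omega$ (hence perpendicular to $w$), the two constraints give $a = V\cdot{\bf n}$ and $a({\bf n}\cdot w)+b(\nu\cdot w)=0$, so
\[
\dot p\cdot \nu \;=\; b \;=\; -\,\frac{{\bf n}\cdot w}{\nu\cdot w}\,(V\cdot {\bf n}).
\]
The tangential component of $\dot p$ along $\partial(\Sigma_s\cap\Omega)$ is irrelevant because it merely slides boundary points along the fixed curve and contributes nothing to the area change.

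\textbf{Step 3: Apply standard first variation.} With the reparametrization from Step 1, the family $\Sigma_s\cap\Omega$ is deformed by the vector field $\tilde V$ whose interior values equal $V$ up to a tangent vector field on $\Sigma_s$. The classical first variation formula for a surface with boundary yields
\[
\frac{d}{ds}\area(\Sigma_s\cap\Omega) \;=\; -\int_{\Sigma_s\cap\Omega}{\bf H}\cdot\tilde V \;+\; \int_{\partial(\Sigma_s\cap\Omega)}\tilde V\cdot\nu.
\]
Since $\bf H$ is normal to $\Sigma_s$, the interior integral equals $-\int {\bf H}\cdot V$. Substituting the expression for $\dot p\cdot \nu$ from Step 2 into the boundary integral yields the stated formula.

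\textbf{Main obstacle.} The only subtle point is Step 2: one must recognize that $\dot p$ is not $V$ itself (even though $V$ is the velocity of $\Sigma_s$), because $V$ in general fails to be tangent to $\partial\Omega$, so the parametrization has to be adjusted by a tangent-to-$\Sigma_s$ vector to keep boundary points on $\partial\Omega$. This adjustment is precisely what produces the factor $\tfrac{{\bf n}\cdot w}{\nu\cdot w}$, and explains why the boundary term blows up when $\Sigma_s$ becomes tangent to $\partial\Omega$ (i.e.\ $\nu\cdot w\to 0$)—the very degeneration the author circumvents by enlarging $\mathbb B^3$ to $\Omega$ in the application.
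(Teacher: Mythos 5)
Your proof is correct, but it takes a genuinely different route from the paper's. The paper never reparametrizes the family: it smooths the indicator function $\chi_\Omega$ to $\chi^\epsilon_\Omega$ with $\nabla\chi^\epsilon_\Omega=-|\nabla\chi^\epsilon_\Omega|\,w$, applies the weighted first variation formula $\frac{d}{ds}\int_{\Sigma_s}\chi^\epsilon_\Omega=\int_{\Sigma_s}-\chi^\epsilon_\Omega\,{\bf H}\cdot V+\nabla\chi^\epsilon_\Omega\cdot{\bf n}\;V\cdot{\bf n}$, rewrites the gradient term as $\nabla\chi^\epsilon_\Omega\cdot\nu\,\frac{w\cdot{\bf n}}{w\cdot\nu}\,V\cdot{\bf n}$, and converts it to a boundary integral by the divergence theorem on $\Sigma_s$ as $\epsilon\to 0$. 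You instead adjust the parametrization so boundary points stay on $\partial\Omega$ and read off the conormal speed $\dot p\cdot\nu=-\frac{{\bf n}\cdot w}{\nu\cdot w}(V\cdot{\bf n})$ from the two kinematic constraints $\dot p\cdot w=0$ and $\dot p\cdot{\bf n}=V\cdot{\bf n}$; your Step~2 algebra is right (the basis $\{{\bf n},\nu,\tau\}$ is orthonormal and $\tau\cdot w=0$), and the tangential slack is indeed harmless since ${\bf H}$ is normal and only $\tilde V\cdot\nu$ enters the boundary term. Your version is more geometric and explains where the factor $\frac{{\bf n}\cdot w}{\nu\cdot w}$ comes from; its one mild debt is the implicit-function-theorem construction of $p(s)$ in Step~1, which requires the transversality $\nu\cdot w\ne 0$ that both proofs tacitly assume (and which the paper arranges in the application by enlarging $\mathbb B^3$ to $\Omega$). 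The paper's cutoff argument trades that geometric construction for a limiting argument in $\epsilon$, but the two proofs deliver the same formula under the same hypotheses.
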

\begin{proof}
We first smoothly extend $w$ to a unit vector field on a neighborhood of $\partial\Omega$ in $\Omega$, and $\nu$ to a unit {\it tangent} vector field on a neighborhood of $\partial\Sigma_s$ in $\Sigma_s$. Let $\epsilon>0$, and $\Omega_\epsilon \subset \Omega $  be where the distance from $\partial \Omega$ is at least $\epsilon$.  Then by using the function $\dist(\cdot,\partial\Omega)$ on $\Omega$, with suitable smoothening, we can
approximate the indicator function  $\chi_\Omega$ by a smooth function $\chi^\epsilon_\Omega$ that is 0 outside $\Omega$ and 1 on $\Omega_\epsilon$, with $\nabla\chi^\epsilon_\Omega=-|\nabla\chi^\epsilon_\Omega|w$ in between.

Now, using the first variation formula (4.2) in \cite[p.49]{Eck12},
$$\frac d{ds}\int_{\Sigma_s}\chi^\epsilon_\Omega=\int_{\Sigma_s}-\chi^\epsilon_\Omega{\bf H}\cdot V+\nabla\chi^\epsilon_\Omega\cdot{\bf n}\; V\cdot{\bf n}.$$
Note that $$\nabla\chi_\Omega^\epsilon\cdot{\bf n}\; V\cdot{\bf n}=-|\nabla\chi_\Omega^\epsilon|\;w\cdot{\bf n}\; V\cdot{\bf n}=\nabla\chi_\Omega^\epsilon\cdot \nu\;\frac {w\cdot{\bf n}}{w\cdot \nu}\; V\cdot{\bf n}.$$
Denoting $g:=\frac {w\cdot{\bf n}}{w\cdot \nu}\; V\cdot{\bf n}$, we then have
$$\int_{\Sigma_s}\nabla\chi_\Omega^\epsilon\cdot{\bf n}\; V\cdot{\bf n}=\int_{\Sigma_s}\nabla\chi_\Omega^\epsilon\cdot g\nu =-\int_{\Sigma_s}\chi_\Omega^\epsilon\div(g\nu )\xrightarrow{\epsilon\to 0}-\int_{\Sigma_s\cap\Omega}\div(g\nu )=-\int_{\partial(\Sigma_s\cap\Omega)}g,$$
in which the second and the third equality are due to divergence theorem. Hence, $\frac d{ds}\area(\Sigma_s\cap\Omega)$ is equal to
$$
    \lim_{\epsilon\to 0}\frac d{ds}\int_{\Sigma_s}\chi^\epsilon_\Omega=\lim_{\epsilon\to 0}\int_{\Sigma_s}-\chi^\epsilon_\Omega{\bf H}\cdot V+\nabla\chi^\epsilon_\Omega\cdot{\bf n}\; V\cdot{\bf n}=-\int_{\Sigma_s\cap\Omega}{\bf H}\cdot V-\int_{\partial(\Sigma_s\cap\Omega)}g.
$$
\end{proof}

\section{A Lemma about Cubic Polynomials}
\begin{lem}\label{cubicfun}
There exists $h>0$ such that the following is true. For any $a,b,c$ such that $a^2+b^2+c^2=1$, define $f:[-\frac 12, \frac 12]\to\R$ by  $f(x)=ax^3+bx+c$. Then there exists some interval of length $\frac 18$ in $[-\frac 12,\frac 12]$ on which $|f|>h$. 
\end{lem}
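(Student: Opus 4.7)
The natural approach is a compactness argument on the parameter sphere $\{(a,b,c) \in \R^3 : a^2+b^2+c^2=1\}$. The key qualitative fact is that $f$ is a polynomial of degree at most $3$, so if $f \not\equiv 0$ then $f$ has at most $3$ zeros in $[-\tfrac12,\tfrac12]$.

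My plan is to argue by contradiction. Suppose no such $h$ exists. Then for every $n \geq 1$ we can find $(a_n,b_n,c_n)$ on the unit sphere and $f_n(x) = a_n x^3 + b_n x + c_n$ with the property that every closed interval $I \subset [-\tfrac12,\tfrac12]$ of length $\tfrac18$ contains a point where $|f_n| \leq 1/n$. Since the unit sphere in $\R^3$ is compact, after passing to a subsequence I may assume $(a_n,b_n,c_n) \to (a,b,c)$ with $a^2+b^2+c^2=1$. Setting $f(x) = ax^3 + bx + c$, the convergence $f_n \to f$ is uniform on $[-\tfrac12,\tfrac12]$, and $f$ is a nonzero polynomial of degree at most $3$.

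Next, I would fix four pairwise disjoint closed subintervals of $[-\tfrac12,\tfrac12]$ of length $\tfrac18$, for instance
\begin{equation*}
I_1 = [-\tfrac12,-\tfrac38],\quad I_2=[-\tfrac14,-\tfrac18],\quad I_3=[0,\tfrac18],\quad I_4=[\tfrac14,\tfrac38].
\end{equation*}
For each $k\in\{1,2,3,4\}$ and each $n$, pick $x_n^{(k)} \in I_k$ with $|f_n(x_n^{(k)})| \leq 1/n$. A further diagonal subsequence extraction yields $x_n^{(k)} \to x_\infty^{(k)} \in I_k$ for each $k$. Uniform convergence $f_n \to f$ together with $|f_n(x_n^{(k)})| \leq 1/n$ forces $f(x_\infty^{(k)}) = 0$. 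Since the $I_k$'s are pairwise disjoint, the four roots $x_\infty^{(1)}, \ldots, x_\infty^{(4)}$ are distinct, contradicting the fact that $f$ is a nonzero polynomial of degree at most $3$.

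I do not expect a serious obstacle: the only things to verify are that four disjoint intervals of length $\tfrac18$ fit inside $[-\tfrac12,\tfrac12]$ (they do, since $4 \cdot \tfrac18 = \tfrac12 < 1$) and that the limit $(a,b,c)$ remains on the unit sphere (which is immediate from compactness). The slight subtlety worth flagging is the need to ensure $f \not\equiv 0$, which is automatic from $a^2+b^2+c^2=1$; this is precisely why the statement is normalized to the unit sphere rather than allowing arbitrary coefficients.
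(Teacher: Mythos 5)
Your proof is correct, and it takes a genuinely different route from the paper's. You compactify the coefficient sphere, pass to a limit polynomial $f\not\equiv 0$, and derive a contradiction by producing four distinct roots of a nonzero polynomial of degree at most $3$, using four pairwise disjoint test intervals of length $\tfrac18$. The paper instead stays with the sequence $f_n$ itself: it shows that the sublevel set $\{|f_n|<\tfrac1n\}$ must contain the union of the (at most three) maximal intervals around the roots of $f_n$ and that this union has measure at least $\tfrac12$, deduces from this that $\sup|f_n'|\to 0$, hence $a_n,b_n\to 0$ and $c_n\to 1$, and then contradicts $|f_n|<\tfrac1n$ holding on a large set. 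Your argument is cleaner in that the final contradiction (a cubic with four roots) is immediate, and it avoids the paper's slightly delicate intermediate steps (justifying that each complementary interval has length at most $\tfrac18$, and that smallness of $f_n$ on a set of measure $\tfrac12$ forces $f_n'$ to be uniformly small); the price is the double subsequence extraction, which is routine. Both proofs ultimately rest on the same two facts — compactness of the normalization and the bounded degree of $f$ — but deploy them in different orders, and either would serve the paper equally well.
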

\begin{proof}
Assume, by contradiction, that for each positive integer $n$ there exists a cubic function $f_n(x)=a_nx^3+b_nx+c_n$, with $a^2_n+b^2_n+c^2_n=1$ such that there is no interval of length $\frac 18$ in $[-\frac 12,\frac 12]$ on which $|f|>\frac 1n$. For each $n$, let $x_i$, for $i$ runs from 1 to at most 3, be the roots of $f_n(x)=0$, and $I_i\subset [-\frac 12,\frac 12]$ be the maximal interval containing $x_i$ on which $|f_n|<\frac 1n$. Then $[-\frac 12,\frac 12]\backslash(I_1\cup I_2\cup I_3)$  is a union of at most 4 intervals, each of which has length at most $\frac 18$. Thus, $I_1\cup I_2\cup I_3$ has length at least $1-4\cdot \frac 18=\frac 12$, and on it $|f_n|<\frac 1n$. Then it follows easily that $\sup_{x\in [-\frac 12,\frac 12]}|f'_n(x)|\to 0$ as $n\to\infty$. Since $f'_n(x)=3a_nx^2+b_n$, we must have $a_n\to 0$ and $b_n\to 0$ too, which forces $c_n\to 1$ since $a^2_n+b^2_n+c^2_n=1$. But then $f_n$ is very close to $1$ on $[-\frac 12,\frac 12]$, contradicting that $|f_n|<\frac 1n$ on a set of length at least $\frac 12$. 
\end{proof}

\bibliographystyle{alpha}
\bibliography{ref}

\end{document}